\newcommand{\sgn} {\operatorname{sign}}
\newcommand{\Por}{{\Pi}^r}
\newcommand{\Po}{\Pi}
\newcommand{\Bildeinbinden}[2] {\setlength{\epsfxsize}{#1}\epsfbox{#2}}
\newtheorem{theorem}{Theorem}[section]
\newtheorem{corollary}{Corollary}[section]
\newtheorem{lemma}{Lemma}[section]
 \numberwithin{equation}{section}
\begin{document}

\title{On Sharpness of Error Bounds for Single Hidden Layer Feedforward Neural
Networks}

\author{Steffen Goebbels\footnote{Steffen.Goebbels@hsnr.de, Niederrhein
University of Applied Sciences, Faculty of Electrical Engineering and Computer Science, Institute for Pattern
Recognition, D-47805 Krefeld, Germany}}

\maketitle
\begin{center}
Sixth uploaded version: This is a pre-print of an accepted journal paper
that will appear in Results in Mathematics (RIMA) under the title ``{\bf On Sharpness
of Error Bounds for Univariate Approximation by Single Hidden Layer Feedforward
Neural Networks}'', 17.06.2020
\end{center}

\begin{abstract}
A new non-linear variant of a quantitative extension of the uniform boundedness
principle is used to show sharpness of error bounds for univariate approximation
by sums of sigmoid and ReLU functions. Single hidden layer feedforward neural
networks with one input node perform such operations. Errors of best
approximation can be expressed using moduli of smoothness of the function to be approximated (i.e., to
be learned). In this context, the quantitative
extension of the uniform boundedness principle indeed allows to construct
counterexamples that show approximation rates to be best
possible. Approximation errors do not belong to the little-o class of given
bounds. By choosing piecewise linear activation functions, the discussed
problem becomes free knot spline approximation.
Results of the present paper also hold for non-polynomial
(and not piecewise defined) activation functions like inverse tangent. 
Based on Vapnik-Chervonenkis dimension, first results are shown for
the logistic function.
\end{abstract}

\keywords{Neural Networks, Rates of Convergence, Sharpness of Error Bounds,
Counter Examples, Uniform Boundedness Principle}\\[1em]
\noindent{\bf AMS Subject Classification 2010:} 41A25, 41A50, 62M45

\section{Introduction} 
A feedforward neural network with an activation function $\sigma$, one input,
one output node, and one hidden layer of $n$ neurons as shown in
Figure \ref{fignet} implements an univariate real function $g$ of type 
$$g(x)=\sum_{k=1}^n a_k \sigma(b_k x+c_k) $$
with weights $a_k, b_k, c_k\in\mathbb{R}$.
\begin{figure}
\begin{center}
\Bildeinbinden{0.5\columnwidth}{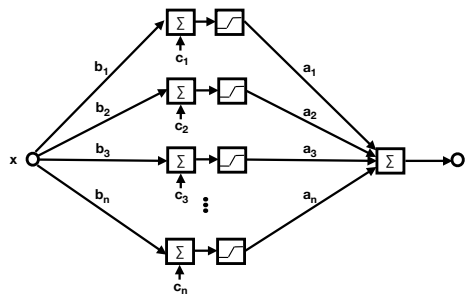}
\end{center}
\caption{One hidden layer neural network realizing $\sum_{k=1}^n a_k
\sigma(b_k x+c_k)$\label{fignet}}
\end{figure}
The given paper does not deal with multivariate approximation.
But some results can be extended to multiple
input nodes, see Section \ref{secFuture}.
Often, activation functions are sigmoid. A sigmoid function $\sigma :
\mathbb{R}\to\mathbb{R}$ is a measurable function with 
$$\lim_{x\to -\infty} \sigma(x)=0 \text{ and }
\lim_{x\to \infty} \sigma(x)=1.$$ 
Sometimes also monotonicity, boundedness, continuity, or even differentiability
may be prescribed. Deviant definitions are based on convexity and
concavity.
In case of differentiability, functions have a bell-shaped first derivative.
Throughout this paper, approximation properties of following sigmoid functions
are discussed:
\begin{alignat*}{3}
\sigma_h(x) &:= \left\{ \begin{array}{cc}0,& x<0\\ 1,& x\geq
0\end{array}\right.&& \text{ (Heaviside function)},\\ 
\sigma_c(x) &:= \left\{ \begin{array}{cl}0,& x<-\frac{1}{2}\\
x+\frac{1}{2} ,&-\frac{1}{2} \leq x\leq \frac{1}{2}\\ 1,&
x>\frac{1}{2}\end{array}\right.&&\text{ (cut function)},\\
\sigma_a(x) &:=
\frac{1}{2}+\frac{1}{\pi} \arctan(x)&&\text{ (inverse tangent)},\\
\sigma_l(x) &:=
\frac{1}{1+e^{-x}}=\frac{1}{2}\left(1+\tanh\left(\frac{x}{2}\right)\right)&&\text{
(logistic function)}.
\end{alignat*}
Although not a sigmoid function, the ReLU function (Rectified Linear Unit)
$$\sigma_r(x):=\max\{0,x\}$$ 
is often used as activation function for deep neural networks due to its
computational simplicity. The
Exponential Linear Unit (ELU) activation function 
$$ \sigma_e(x) := \left\{ \begin{array}{cl} \alpha(e^x-1),& x<0\\
x ,& x\geq 0\end{array}\right. $$
with parameter $\alpha\neq 0$ is a smoother variant of ReLU for $\alpha=1$.
%


Qualitative approximation properties of neural networks have been
studied extensively. For example, it is possible to choose an infinitely often
differentiable, almost monotonous, sigmoid
activation function $\sigma$ such that for each continuous function $f$,
each compact interval and each bound $\varepsilon>0$ weights $a_0, a_1, b_1,
c_1\in\mathbb{R}$ exist such that $f$ can be 
approximated uniformly by $a_0+ a_1 \sigma(b_1 x+c_1)$
on the interval within bound $\varepsilon$, see
\cite{GULIYEV2018296} and literature cited there.
In this sense, a neural network with only one hidden neuron is capable of approximating every
continuous function. 
However, activation
functions typically are chosen fixed when applying neural networks to solve
application problems. They do not depend on the  unknown function to be
approximated.
In the late 1980s it was already known
that, by increasing the number of neurons, 
all continuous functions can be
approximated arbitrarily well in the sup-norm on a compact set with each
non-constant bounded monotonically increasing and continuous activation
function (universal approximation or density property, see proof of Funahashi in \cite{Funahashi89}).
For each continuous sigmoid
activation function (that does not have to be monotone), the universal
approximation property was proved by Cybenko in \cite{Cybenko89} on the unit
cube.
The
result was extended to bounded sigmoid activation functions by
Jones \cite{Jones90} without requiring continuity or monotonicity. For
monotone sigmoid (and not necessarily continuous) activation functions, Hornik,
Stinchcombe and White \cite{HORNIK1989359} extended the universal approximation property 
to the approximation of measurable functions.
Hornik \cite{Hornik91} proved density in $L^p$-spaces for any 
non-constant bounded and continuous activation functions. A rather general
theorem is proved in \cite{Leshno93}. Leshno et al.~showed for
any continuous activation function $\sigma$
that the universal approximation property is equivalent
to the fact that $\sigma$ is not an algebraic polynomial.

There are many other density results, e.g., cf.\ \cite{Chen92,Costarelli2013}.
Literature overviews can be found in \cite{pinkus_1999} and
\cite{Sanguineti08universalapproximation}.

To approximate or interpolate a given but unknown function $f$,
constants $a_k$, $b_k$, and $c_k$ typically are obtained by learning
based on sampled function values of $f$. The underlying optimization
algorithm (like gradient descent with back propagation) might get stuck in a
local but not in a global minimum.
Thus, it might not find optimal constants to approximate $f$ best possible.
This paper does not focus on learning but on general approximation
properties of function spaces $$ \Phi_{n,\sigma}:=\left\{g : [0,1]\to\mathbb{R} : g(x)=\sum_{k=1}^n a_k
\sigma(b_k x+c_k) :
a_k,b_k,c_k\in\mathbb{R}\right\}.$$ 
Thus, we discuss functions on the interval $[0, 1]$. Without loss of
generality, it is used instead of an arbitrary compact interval $[a, b]$.
In some papers, an additional constant function $a_0$ is allowed as summand in
the definition of $\Phi_{n,\sigma}$. Please note that $a_k \sigma(0\cdot
x + b_k)$ already is a constant and that the definitions do not differ
significantly.

For $f\in B[0, 1]$, the set of bounded real-valued functions on the interval
$[0, 1]$, let $\|f\|_{B[0, 1]} := \sup\{
|f(x)| : x\in[0, 1]\}$. By $C[0, 1]$ we denote the Banach space of continuous
functions on $[0, 1]$ equipped with norm $\|f\|_{C[0, 1]}:= \|f\|_{B[0, 1]}$.
For Banach spaces $L^p[0, 1]$, $1\leq p<\infty$, of measurable functions we
denote the norm by $\|f\|_{L^p[0, 1]}:=(\int_0^1 |f(x)|^p\, dx)^{1/p}$. To avoid
case differentiation, we set
$X^\infty[0, 1]:=C[0, 1]$ with $\|\cdot\|_{X^\infty[0,
1]}:=\|\cdot\|_{B[0, 1]}$, and $X^p[0, 1]:=L^p[0, 1]$ with $\|f\|_{X^p[0,
1]}:=\|f\|_{L^p[0, 1]}$, $1\leq p<\infty$.

The error of best approximation
$E(\Phi_{n,\sigma}, f)_p$, $1\leq p\leq \infty$, is defined via
$$ E(\Phi_{n,\sigma}, f)_p := \inf\{ \|f-g\|_{X^p[0, 1]} : g\in \Phi_{n,\sigma}
\}.
$$
We use the abbreviation $E(\Phi_{n,\sigma}, f):=E(\Phi_{n,\sigma},
f)_\infty$ for $p=\infty$.

A trained network cannot approximate a function better than the error of best
approximation. Therefore, it is an important measure of what can and what cannot
be done with such a network.

The error of best approximation depends on the
smoothness of $f$ that is measured in terms of moduli of smoothness (or moduli
of continuity). 
In contrast to using derivatives,
first and higher differences of $f$ obviously always exist. By applying a norm
to such differences, moduli of smoothness measure a ``degree of continuity'' of $f$.

For a natural number $r \in \mathbb{N}=\{1, 2,\dots\}$, the $r$th difference of
$f\in B[0, 1]$ at point $x\in [0, 1-rh]$ with step size $h>0$ is defined as 
\[ \Delta^1_h
f(x):= f(x+h)-f(x),\quad \Delta^r_h f(x):= \Delta^1_h \Delta^{r-1}_h f(x),\, r>1,
\mbox{ or}
\]
\[
 \Delta^r_h f(x) := \sum_{j=0}^r (-1)^{r-j}\binom{r}{j} f(x+jh).
\]
The $r$th uniform modulus of smoothness is the
smallest upper bound of the absolute values of $r$th differences: 
\[  \omega_r(f, \delta) := \omega_r
(f, \delta)_\infty := \sup \left\{ |\Delta_h^r f(x)| : x\in [0,1-rh],\,
0<h\leq\delta\right\}.
\]
With respect to $L^p$ spaces, $1\leq p<\infty$, let
\[ \omega_r
(f, \delta)_p := \sup \left\{ \left(\int_0^{1-rh} |\Delta_h^r f(x)|^p \,
dx\right)^{1/p}\, :\, 0<h\leq\delta\right\}.
\]
Obviously, $ \omega_r
(f, \delta)_p \leq 2^r \|f\|_{X^p[0,1]}$, and for $r$-times
continuously differentiable functions $f$, there holds
(cf.~\cite[p.~46]{DeVore93})
\begin{equation}
 \omega_r(f, \delta)_p \leq \delta^r\|f^{(r)}\|_{X^p[0, 1]}.\label{gegenAbl}
\end{equation}

Barron applied Fourier methods in \cite{Barron93}, cf.~\cite{LEWICKI2004}, 
to establish rates of convergence in an $L^2$-norm, i.e., he estimated the
error $E(\Phi_{n,\sigma}, f)_2$ with respect to $n$ for $n\to\infty$.
Makovoz \cite{MAKOVOZ1998215} analyzed rates for uniform convergence. With
respect to moduli of smoothness, Debao \cite{Chen93} proved a direct estimate
that is here presented in a version of the textbook \cite[p.~172ff]{Cheney2000}.
This estimate is independent of the choice of a
bounded, sigmoid function $\sigma$.
Doctoral thesis \cite{Costarelli14}, cf.\ \cite{COSTARELLI2013101}, provides an
overview of such direct estimates in Section 1.3. 
%

According to Debao,
\begin{equation}
 E(\Phi_{n,\sigma}, f) \leq \left[\sup_{x\in\mathbb{R}} |\sigma(x)|\right]
 \omega_1\left(f, \frac{1}{n}\right)\label{d1}
\end{equation}
holds for each $f\in C[0,
1]$.
This is the prototype estimate for which sharpness is discussed in this paper.
In fact, the result of Debao for $E(\Phi_{n,\sigma}, f)$ allows to
additionally restrict weights such that $b_k\in\mathbb{N}$ and
$c_k\in\mathbb{Z}$.
The estimate has to hold true even for $\sigma$ being a discontinuous
Heaviside function. That is the reason why one can only expect an estimate
in terms of a first order modulus of smoothness. 
If the order of approximation
of a continuous function $f$ by such piecewise constant functions is $o(1/n)$
then $f$ itself is a constant, see \cite[p.~366]{DeVore93}.
In fact, the idea behind Debao's proof is that sigmoid functions can be
asymptotically seen as Heaviside functions. One gets arbitrary step functions to
approximate $f$ by superposition of Heaviside
functions.
For quasi-interpolation operators based on the logistic activation function
$\sigma_l$, Chen and Zhao proved similar estimates in \cite{Chen13}
(cf.~\cite{ANASTASSIOU2011809,Anastassiou2011} for hyperbolic tangent).
However, they only reach a convergence order of $O\left(1/n^\alpha\right)$ for
$\alpha<1$. With respect to the error of best approximation, they prove 
$$E(\Phi_{n,\sigma_l}, f) \leq 80\cdot
 \omega_1\left(f, \frac{\exp\left(\frac{3}{2}\right)}{n}\right)$$ 
by estimating
with a polynomial of best approximation. Due to the different technique,
constants are larger than in error bound (\ref{d1}). 

If one takes additional properties
of $\sigma$ into account, higher convergence rates are possible.
Continuous sigmoid cut function $\sigma_c$ and ReLU function $\sigma_r$ 
lead to spaces $\Phi_{n,\sigma_{c,r}}$ of continuous,
piecewise linear functions.
They consist of free knot spline functions  
of polynomial degree at most one
with at most $2n$ or $n$ knots, cf.\ \cite[Section 12.8]{DeVore93}.
Spaces $\Phi_{n,\sigma_{c,r}}$ include all continuous spline
functions $g$ on $[0,1]$ with polynomial degree at most one that have at most
$n-1$ simple knots.
We show $g\in \Phi_{n,\sigma_c}$ for such a spline $g$ with equidistant knots
$x_k=\frac{k}{n-1}$, $0\leq k \leq n-1$, to obtain an error bound for $n\geq 2$:
\begin{alignat*}{1}
g(x) &= g(0) \sigma_c\left(x\!+\!\frac{1}{2}\right)+
\sum_{k=0}^{n-2}
\left[g\left( x_{k+1}\right)-g\left( x_{k}\right)\right] 
\sigma_c\left( (n\!-\!1) \cdot
\left(x- \frac{k+\frac{1}{2}}{n-1}\right)\right).
\end{alignat*}
One can also represent $g$ by ReLU functions, i.e.,
$g\in
\Phi_{n,\sigma_r}$:
With ($1\leq k\leq n-1$)
$$  m_0 := g(0),\quad m_k := 
  \frac{
  g\left(x_{k}\right)-g\left(x_{k-1}\right)}{x_k-x_{k-1}}
  -\sum_{j=0}^{k-1} m_j,
  $$  
we get
\begin{equation*}
 g(x) = m_0\cdot \sigma_r\left(x+1\right) + \sum_{k=1}^{n-1} m_k
\sigma_r\left(x-x_{k-1}\right).
\end{equation*}
Thus, we can apply the larger error bound \cite[p.~225, Theorem 7.3 for
 $\delta=(n-1)^{-1}$]{DeVore93} for fixed simple knot spline approximation 
with functions $g$ in terms of a second modulus to
improve convergence rates up to $O\left(1/n^2\right)$: For $f\in X^p[0, 1]$, $1\leq p\leq\infty$,
and $n\geq 2$
\begin{equation}
E(\Phi_{n,\sigma_{c,r}}, f)_p \leq C \omega_2\left(f,
\frac{1}{n-1}\right)_p \leq C \omega_2\left(f,
\frac{2}{n}\right)_p \leq 4C \omega_2\left(f,
\frac{1}{n}\right)_p.\label{m2}
\end{equation}

Section \ref{secHigh} deals with even higher order direct estimates. 
Similarly to (\ref{m2}), not only sup-norm bound (\ref{d1}) but also an
$L^p$-bound, $1\leq p< \infty$, for approximation with Heaviside function
$\sigma_h$ can be obtained from the corresponding larger bound of 
fixed simple knot spline approximation.
Each $L^p[0, 1]$-function that is constant between knots $x_k=\frac{k}{n}$,
$0\leq k\leq n$, can be written as a linear combination of $n$ translated
Heaviside functions.
Thus, \cite[p.~225, Theorem
7.3 for $\delta=1/n$]{DeVore93}) yields for $n\in\mathbb{N}$
\begin{equation}
E(\Phi_{n,\sigma_h}, f)_p \leq C \omega_1\left(f,
\frac{1}{n}\right)_p.\label{m2b}
\end{equation}

Lower error bounds are much harder to obtain than upper bounds, 
cf.\ \cite{pinkus_1999} for some results with regard to multilayer feedforward
perceptron networks. Often, lower bounds are given using 
a (non-linear) Kolmogorov $n$-width $W_n$ (cf.\ \cite{Pinkus85,Sanguineti99}),
$$W_n := \inf_{b_1,\dots,b_n, c_1\dots,c_n} \sup_{f\in X}
\inf_{a_1,\dots,a_n}\left\| f(\cdot)- \sum_{k=1}^n a_k\sigma(b_k\, \cdot\, +
c_k)\right\|$$ 
for a suitable function space $X$ (of functions with certain smoothness)
and norm $\|\cdot\|$. Thus, parameters
$b_k$ and $c_k$ cannot be chosen individually for each function $f\in X$.
Higher rates of convergence might occur, if that becomes possible.

There are three somewhat different types of sharpness results that might be able
to show that left sides of equations (\ref{d1}), (\ref{m2}),  (\ref{m2b}) or
(\ref{jacksonneu}) and (\ref{jacksonneu2}) in Section \ref{secHigh} do not
converge faster to zero than the right sides.

The most far reaching results would provide lower estimates of errors of best
approximation in which the lower bound is a modulus of smoothness. In
connection with direct upper bounds in terms of the same moduli,
this would establish theorems similar to the equivalence
between moduli of smoothness and K-functionals (cf.\ \cite[theorem
of Johnen, p.~177]{DeVore93}, \cite{JohnenScherer}) in
which the error of best approximation replaces the K-functional. Let $\sigma$ be $r$-times continuously differentiable
like $\sigma_a$ or $\sigma_l$. Then for $f\in C[0, 1]$, a
standard estimate based on (\ref{gegenAbl}) is
\begin{alignat*}{1}
 \omega_r\left(f,\frac{1}{n}\right) &= \inf_{g\in \Phi_{n,\sigma}}\!
 \omega_r\left(f-g+g,\frac{1}{n}\right)  \leq \inf_{g\in \Phi_{n,\sigma}}\! 
\left[\omega_r\left(f-g,\frac{1}{n}\right) +
\omega_r\left(g,\frac{1}{n}\right)\right]\\ 
&\leq  \inf_{g\in \Phi_{n,\sigma}}\! 
\left[ 2^r\|f-g\|_{B[0, 1]} + \frac{1}{n^r}\|g^{(r)}\|_{B[0,
1]}\right].\end{alignat*}  
It is unlikely that one can somehow bound $\|g^{(r)}\|_{B[0, 1]}$ by
$C\|f\|_{B[0, 1]}$ to get
\begin{equation}
 \omega_r\left(f,\frac{1}{n}\right) \leq 2^r E(\phi_{n,\sigma},f) +
\frac{C}{n^r}\|f\|_{B[0, 1]}. \label{lb}
\end{equation}
However, there are different attempts to prove such
theorems in \cite[Remark 1, p.~620]{Wang10}, \cite[p.~101]{Xu2004} and
\cite[p.~451]{Xu2006}. But the proofs contain
difficulties and results are wrong, see \cite{Goebbels19}. In fact, we prove
with Lemma \ref{lower} in Section \ref{secHigh} that (\ref{lb}) is not valid even if constant
$C$ is allowed to depend on $f$.


A second class of sharpness results consists of inverse and equivalence
theorems.
Inverse theorems provide upper bounds for moduli of smoothness in terms of
weighted sums of approximation errors. For pseudo-interpolation operators based on piecewise linear activation
functions and B-splines (but not for errors of best approximation), 
\cite{Li2019} deals with an inverse estimate based on
Bernstein polynomials.

An idea that does not work is to adapt the inverse
theorem for best trigonometric approximation \cite[p.~208]{DeVore93}. 
Without considering effects related to interval endpoints in algebraic
approximation one gets a (wrong) candidate inequality
\begin{equation}
 \omega_r\left(f,\frac{1}{n}\right) \leq \frac{C_r}{n^r} \left[\sum_{k=1}^n
k^{r-1} E(\phi_{k,\sigma},f) + \|f\|_{B[0,1]}\right].\label{proto} 
\end{equation}
By choosing $f\equiv\sigma$ for a non-polynomial, $r$th times continuously
differentiable activation function $\sigma$, the modulus on the left side of the estimate
behaves like $n^{-r}$. But the errors of best
approximation on the right side are zero. At least this can be cured by the
additional expression $\frac{C_r}{n^r} \|f\|_{B[0,1]}$.

Typically, the proof of an inverse theorem is based on a Bernstein-type
inequality that is difficult to formulate for function spaces discussed here. The Bernstein
inequality provides a bound for derivatives. If $p_n$ is a trigonometric
polynomial of degree at most $n$ then $\|p_n'\|_{B[0,2\pi]}\leq n \|p_n\|_{B[0,2\pi]}$,
cf.~\cite[p.~97]{DeVore93}. The problem here is that
differentiating $a\sigma(bx+c)$ leads to a factor $b$ that cannot be
bounded easily. Indeed, we show for a large class of activation functions that
(\ref{proto}) can't hold, see (\ref{antiproto}). As noticed in
\cite{Goebbels19}, the inverse estimates of type (\ref{proto}) proposed in
\cite{Wang10} and \cite{Xu2006} are wrong. 

Similar to inverse theorems,
equivalence theorems (like (\ref{equivalence}) below)
describe equivalent behavior of expressions of moduli of smoothness and
expressions of approximation errors. Both inverse and equivalence 
theorems allow to determine smoothness properties, typically membership to Lipschitz classes or
Besov spaces, from convergence rates. 
Such a property is proved in \cite{Costarelli17} for
max-product neural network operators activated by sigmoidal functions. 
%
%
%
The relationship between order of convergence of best approximation and Besov
spaces is well understood for approximation with free knot spline functions and
rational functions, see \cite[Section 12.8]{DeVore93}, cf. \cite{LEI20131006}. The Heaviside
activation function leads to free knot splines of polynomial degree 0, i.e.,
less than $r=1$, cut and ReLU function correspond with polynomial degree less than
$r=2$. For $\sigma$ being one of these functions, and for $0<\alpha<r$, $f\in L^p[0, 1]$, $1\leq p<\infty$ ($p=\infty$
is excluded), $k:=1$
if $\alpha<1$ and $k:=2$ otherwise, $q:=\frac{1}{\alpha+1/p}$, there holds the
equivalence (see \cite{DeVore88})
\begin{equation}
 \int_0^\infty \frac{\omega_k(f,
 t)_{q}^{q}}{t^{1+\alpha q}}\,
 dt <
 \infty
 \iff
\sum_{n=1}^\infty \frac{\left(E(\Phi_{n,\sigma},
f)_p\right)^{q}}{n^{1-\alpha q}} < \infty.
\label{equivalence}
\end{equation}
However, such equivalence theorems might not be suited to obtain little-o
results: 
Assume that $E(\Phi_{n,\sigma}, f)_p =
\frac{1}{n^\beta (\ln(n+1))^{1/q}}
=o\left(\frac{1}{n^\beta}\right)$, then the right side of (\ref{equivalence})
converges exactly for the same smoothness parameters $0 < \alpha < \beta$ than
if $E(\Phi_{n,\sigma}, f)_p = \frac{1}{n^\beta} \neq o\left(\frac{1}{n^\beta}\right)$.

The
third type of sharpness results is based on counterexamples. 
The present paper follows this approach to deal with little-o effects.
Without further restrictions, counterexamples show that convergence orders 
can not be faster than stated in
terms of moduli of smoothness in (\ref{d1}),
(\ref{m2}), (\ref{m2b}) and the estimates in following Section \ref{secHigh} for some activation functions.
To obtain such counterexamples, a general theorem is introduced in Section
\ref{secUBP}.
It is applied to neural network approximation in Section \ref{secsharp}.

Unlike the counterexamples in this paper,
counterexamples that do not
focus on moduli of smoothness were recently introduced
in Almira et al.~\cite{almira2018} for 
continuous piecewise polynomial activation functions $\sigma$ with finitely
many pieces (cf. Corollary \ref{thsharpsplines} below) as well as for rational
activation
functions (that we also briefly discuss in Section \ref{secsharp}): Given an
arbitrary sequence of positive real numbers $(\varepsilon_n)_{n=1}^\infty$ with $\lim_{n\to\infty} \varepsilon_n=0$, 
a continuous counterexample $f$ is constructed such that $E(\Phi_{n,\sigma}, f) \geq \varepsilon_n$ for all
$n\in\mathbb{N}$.

\section{Higher order estimates}\label{secHigh}
In this section, two upper bounds in terms of higher
order moduli of smoothness are derived from known results. Proofs are given
for the sake of completeness. 
If, e.g., $\sigma$ is arbitrarily often differentiable on some open
interval such that $\sigma$ is no polynomial on that interval then
it is known that $E(\Phi_{n,\sigma}, p_{n-1})=0$ for all polynomials $p_{n-1}$ of degree at most $n-1$, i.e.,
$
p_{n-1}\in \Po^{n}:=\{ d_{n-1} x^{n-1}+d_{n-2}x^{n-2}+\dots+d_0:
d_0,\dots,d_{n-1}\in\mathbb{R}\},
$
see \cite[p.~157]{pinkus_1999} and (\ref{trick}) below.
Thus, upper bounds for
polynomial approximation can be used as upper bounds for neural network
approximation in connection with certain activation functions.
Due to a corollary of the classical theorem of Jackson, the best approximation
to $f\in X^p[0, 1]$, $1\leq p\leq\infty$, by algebraic polynomials is bounded by
the $r$th modulus of smoothness.
For $n\geq r$, we use Theorem 6.3 in \cite[p.~220]{DeVore93} that is stated for
the interval $[-1, 1]$. However, by applying an affine transformation of $[0,
1]$ to $[-1, 1]$, we see that there exists a constant $C$ independently of $f$
and $n$ such that
\begin{equation}
\inf \{ \| f-p_n\|_{X^p[0, 1]} :  p_n\in\Po^{n+1} \} \leq  C
\omega_r\left(f,\frac{1}{n}\right)_p.
\label{jacksonzitat}
\end{equation}

Ritter proved an estimate in terms of a first order modulus of smoothness
for approximation with nearly exponential activation functions in
\cite{Ritter99}. Due to (\ref{jacksonzitat}), Ritter's proof
can be extended in a straightforward manner to higher order moduli. The special
case of estimating by a second order modulus is discussed in \cite{Wang10}.

According to \cite{Ritter99}, a function $\sigma:\mathbb{R}\to\mathbb{R}$ is
called ``nearly exponential'' iff for each $\varepsilon>0$ there exist real numbers $a$, $b$, $c$, and $d$ such
that for all $x\in (-\infty, 0]$ 
$$  |a\sigma(bx+c)+d - e^x |<\varepsilon.
$$
The logistic function fulfills this condition with $a=1/\sigma_l(c)$, $b=1$,
$d=0$, and $c<\ln(\varepsilon)$ 
such that for $x\leq 0$ there is $0<e^x\leq
1$ and
\begin{alignat*}{1}
\lefteqn{
 |a\sigma_l(bx+c)+d - e^x | =
\left|\frac{\sigma_l(x+c)}{\sigma_l(c)}-e^x\right| =
\left|\frac{1+e^{-c}}{1+e^{-c}e^{-x}} - e^x\right|}\\ 
&= e^x \left|\frac{1+e^{-c}}{e^x+e^{-c}} -1\right|
=
e^x
\frac{1-e^{x}}{e^x+e^{-c}} 
\leq 1\cdot
\frac{1-0}{0+e^{-c}} 
= e^c <\varepsilon.
\end{alignat*}

\begin{theorem}
Let $\sigma:\mathbb{R}\to\mathbb{R}$ be a nearly exponential function, $f\in
X^p[0,1]$, $1\leq p\leq \infty$, and $r\in\mathbb{N}$.
Then, independently of $n\geq \max\{r, 2\}$ (or $n>r$) and $f$, a constant $C_r$
exists such that
\begin{equation}
E(\Phi_{n,\sigma}, f)_p \leq C_r
\omega_r\left(f,\frac{1}{n}\right)_p.\label{jacksonneu}
\end{equation}\label{directsigmoid}
\end{theorem}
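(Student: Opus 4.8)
The plan is to reduce neural network approximation to polynomial approximation and then invoke the Jackson-type estimate \eqref{jacksonzitat}. The key observation is that a nearly exponential function can reproduce the exponential $e^x$ on $(-\infty,0]$ arbitrarily well, and finitely many translated and scaled copies of $e^x$ can reproduce polynomials arbitrarily well on a compact interval. Concretely, I would first show that for any fixed $n$, the monomials $1, x, x^2,\dots,x^{n-1}$ on $[0,1]$ lie in the closure (in $X^p[0,1]$) of the span of functions of the form $x\mapsto e^{\lambda x}$ with $\lambda\le 0$; this is the standard trick alluded to in the reference to \cite[p.~157]{pinkus_1999} and \eqref{trick}. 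One way: consider $h_\lambda(x):=e^{\lambda(x-1)}$ for $\lambda\le 0$ (so the argument $\lambda(x-1)\le 0$ on $[0,1]$), and note that divided differences of $h_\lambda$ in the parameter $\lambda$, evaluated at $\lambda=0$, produce $(x-1)^j$ up to factorials; hence any polynomial of degree $\le n-1$ is a limit of linear combinations of $n$ such exponentials with distinct nonpositive $\lambda_1,\dots,\lambda_n$.

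Next I would transfer this to $\Phi_{n,\sigma}$. Given $\varepsilon>0$, near-exponentiality supplies $a,b,c,d$ with $|a\sigma(bx+c)+d-e^x|<\varepsilon$ for all $x\le 0$; applying this with the argument rescaled by each $\lambda_k\le 0$ (i.e. replacing $x$ by $\lambda_k x$ when $x\in[0,1]$, or more carefully by $\lambda_k(x-1)$ to keep arguments $\le 0$), each exponential $e^{\lambda_k(x-1)}$ is uniformly approximated on $[0,1]$ by an expression $a_k\sigma(b_k x+c_k)+d_k$, where the constant $d_k$ can be absorbed because a constant function is itself of the form $a\sigma(0\cdot x+c)$ — here I would cite the remark in the introduction that $a_k\sigma(0\cdot x+c_k)$ is already a constant, so one extra neuron (or absorbing into the count, since the estimate is stated for $n\ge\max\{r,2\}$ with room to spare) handles the additive constants. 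Summing over $k$ and controlling the accumulated error by choosing $\varepsilon$ small relative to the (fixed, $n$-dependent) coefficients in the polynomial-to-exponential representation, I conclude that every polynomial in $\Pi^n$ lies in the closure of $\Phi_{n,\sigma}$ in $X^p[0,1]$, i.e. $E(\Phi_{n,\sigma},p_{n-1})_p=0$ for all $p_{n-1}\in\Pi^n$.

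Finally, combining this with \eqref{jacksonzitat}: for any $g\in\Pi^n$ and any $\phi\in\Phi_{n,\sigma}$,
\[
E(\Phi_{n,\sigma},f)_p \le \|f-\phi\|_{X^p[0,1]} \le \|f-g\|_{X^p[0,1]} + \|g-\phi\|_{X^p[0,1]},
\]
and taking the infimum over $\phi$ (which is $0$) and then over $g\in\Pi^n$ (which gives $C\,\omega_r(f,1/n)_p$ by \eqref{jacksonzitat}, valid for $n\ge r$) yields \eqref{jacksonneu} with $C_r=C$. The restriction $n\ge\max\{r,2\}$ matches the hypothesis of \eqref{jacksonzitat} together with a spare neuron for constants.

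The main obstacle is the bookkeeping in the middle step: one must be careful that the number of exponentials used to represent a degree-$(n-1)$ polynomial is exactly $n$ (not more), that the rescaling by $\lambda_k$ keeps all activation arguments in the region $(-\infty,0]$ where near-exponentiality is guaranteed, and that the error from replacing $n$ exponentials by $n$ sigmoid terms stays below any prescribed threshold despite the coefficients blowing up as the $\lambda_k$ cluster near $0$. Since $n$ is fixed before $\varepsilon$ is chosen, this is only a matter of choosing $\varepsilon=\varepsilon(n,\lambda_1,\dots,\lambda_n)$ small enough; there is no uniformity issue in $n$ because \eqref{jacksonneu} is an $n$-by-$n$ statement with $C_r$ independent of $f$ but the internal construction allowed to depend on $n$. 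This is exactly the structure of Ritter's argument in \cite{Ritter99}, extended from $r=1$ to general $r$ purely by swapping the first-order Jackson inequality for \eqref{jacksonzitat}.
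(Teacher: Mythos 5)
Your proof is correct in substance and follows the same overall strategy as the paper's: Jackson's inequality (\ref{jacksonzitat}) reduces the problem to approximating a fixed polynomial of degree at most $n$ by an exponential sum with nonpositive exponents, near-exponentiality then converts each exponential into one neuron, and one extra neuron $\frac{\gamma}{\sigma(c)}\sigma(0\cdot x+c)$ absorbs all additive constants; the resulting off-by-one in the neuron count is repaired exactly as you indicate, via $\omega_r(f,2/n)\le 2^r\omega_r(f,1/n)$, at the cost of enlarging $C_r$. The one step you implement genuinely differently is the passage from polynomials to exponential sums. The paper composes $p_n$ with $h_\alpha(x)=\alpha(1-e^{-x/\alpha})\to x$, so that $p_n(h_\alpha(x))=\gamma_0+\sum_{k=1}^n\gamma_k e^{-kx/\alpha}$ is automatically an exponential sum with exactly $n$ nonconstant terms and nonpositive exponents on $[0,1]$; you instead recover the monomials as limits of divided differences of $\lambda\mapsto e^{\lambda x}$ at nodes clustering at $0$. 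Both work, and both only require choosing $\varepsilon$ after $n$ and the exponents are fixed, so the blow-up of the divided-difference coefficients is harmless, as you note. One sign slip, though: for $\lambda\le 0$ and $x\in[0,1]$ the quantity $\lambda(x-1)$ is \emph{nonnegative}, so the variant $e^{\lambda(x-1)}$ that you call ``more careful'' does not keep the activation arguments in $(-\infty,0]$ where near-exponentiality is available; your other variant, $e^{\lambda x}$ with $\lambda\le 0$, is the correct choice (divided differences at $\lambda=0^-$ then yield $x^j/j!$ directly, and the arguments $\lambda x$ stay nonpositive on $[0,1]$).
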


\begin{proof}
Let $\varepsilon>0$.
Due to (\ref{jacksonzitat}), there exists a polynomial
$p_n\in\Po^{n+1}$ of degree at most $n$ such that
\begin{equation}
\| f-p_n\|_{X^p[0, 1]} \leq  C
\omega_r\left(f,\frac{1}{n}\right)_p + \varepsilon.
\label{jacksonalt}
\end{equation}
The Jackson estimate can be used to extend the proof given for $r=1$ in
\cite{Ritter99}: Auxiliary functions ($\alpha>0$)
$$ h_\alpha(x) := \alpha \left(1-\exp\left(-\frac{x}{\alpha}\right)\right)$$
converge to $x$ pointwise for $\alpha\to\infty$ due to the theorem of
L'Hospital.
Since $\frac{d}{dx}(h_\alpha(x)-x)=0 \Longleftrightarrow e^{-x/\alpha}=1
\Longleftrightarrow x=0$, the maximum of $|h_\alpha(x)-x|$ on $[0, 1]$ is
obtained at the endpoints 0 or 1, and convergence of $h_\alpha(x)$ to $x$ is
uniform on $[0, 1]$ for $\alpha\to\infty$. Thus $\lim_{\alpha\to\infty} p_n(
h_\alpha(x)) = p_n(x)$ uniformly on $[0, 1]$, and for the given $\varepsilon$ we
can choose $\alpha$ large enough to get
\begin{equation}
\| p_n(\cdot) - p_n(h_\alpha(\cdot))\|_{B[0, 1]} <
\varepsilon.\label{expapprox}
\end{equation}
Therefore, function $f$ is approximated by an exponential sum of type
$$ p_n(h_\alpha(x)) = \gamma_0 + \sum_{k=1}^n \gamma_k \exp\left(-\frac{k
x}{\alpha}\right) $$
within the bound $C \omega_r\left(f,n^{-1}\right) +
2\varepsilon$.
It remains to approximate the exponential sum by utilizing that $\sigma$ is
nearly exponential. 
For $1\leq k\leq n$, $\gamma_k\neq0$, there exist parameters
$a_k, b_k, c_k, d_k$ such that 
$$  \left|a_k\sigma\left(b_k\left(-\frac{k x}{\alpha}\right)+c_k\right)+d_k -
\exp\left(- \frac{kx}{\alpha}\right)
\right|<\frac{\varepsilon}{n|\gamma_k|} $$ for all $x\in[0, 1]$.
Also because $\sigma$ is nearly exponential, there exists $c\in\mathbb{R}$
with $\sigma(c)\neq 0$. Thus, a constant $\gamma$ can be expressed via
$\frac{\gamma}{\sigma(c)} \sigma(0x+c)$. 
Therefore, there exists a function $g_{n,\varepsilon}\in \Phi_{n+1, \sigma}$,
\begin{alignat*}{1}
 g_{n,\varepsilon}(x) &= \frac{\gamma_0}{\sigma(c)}
 \sigma(0x+c)+  \sum_{k=1, \gamma_k\neq 0}^n \gamma_k
 \left[a_k\sigma\left(b_k\left(-\frac{k x}{\alpha}\right)+c_k\right)+d_k\right]\\
&=  
\frac{\gamma_0+\sum_{k=1, \gamma_k\neq 0}^n \gamma_k d_k}{\sigma(c)}
\sigma(0x+c) + \sum_{k=1, \gamma_k\neq 0}^n
\gamma_k a_k\sigma\left(b_k\left(-\frac{k x}{\alpha}\right)+c_k\right)
\end{alignat*}
such that
\begin{alignat}{1}
 \lefteqn{\|  p_n(h_\alpha(\cdot))  - g_{n,\varepsilon}\|_{B[0, 1]}}\nonumber\\
 &\leq \sum_{k=1, \gamma_k\neq 0}^n |\gamma_k| \left\|
 a_k\sigma\left(b_k\left(-\frac{k x}{\alpha}\right)+c_k\right)+d_k -
\exp\left(- \frac{kx}{\alpha}\right)
 \right\|_{B[0,1]}\nonumber\\
 &\leq 
n\frac{\varepsilon}{n} = \varepsilon.\label{last}
\end{alignat}
By combining (\ref{jacksonalt}), (\ref{expapprox}) and (\ref{last}), we get
\begin{alignat*}{1}
&E(\Phi_{n + 1,\sigma}, f)_p\\
 &\leq \| f-p_n\|_{X^p[0, 1]}\!\! + \! \| p_n -
p_n(h_\alpha(\cdot))\|_{B[0, 1]}\!\! + \! \|  p_n(h_\alpha(\cdot))  -
g_{n, \varepsilon}\|_{B[0, 1]}\\
&\leq C \omega_r\left(f,\frac{1}{n}\right)_p +
3\varepsilon.
\end{alignat*}
Since $\varepsilon$ can be chosen
arbitrarily, we obtain for $n\geq 2$
\begin{equation}
 E(\Phi_{n,\sigma}, f)_p \leq  C
\omega_r\left(f,\frac{1}{n-1}\right)_p \leq C
\omega_r\left(f,\frac{2}{n}\right)_p \leq  C 2^r
\omega_r\left(f,\frac{1}{n}\right)_p.\label{verschiebung}
\end{equation}
\end{proof}

By choosing $a=1/\alpha$, $b=1$, $c=0$, and $d=1$, the ELU activation function
$\sigma_e(x)$ obviously fulfills the condition to be nearly exponential. But
its definition for $x\geq 0$ plays no role. 

Given a nearly exponential activation function, a lower bound (\ref{lb}) or an
inverse estimate (\ref{proto}) with a constant $C_r$, independent of $f$, is not
valid, see \cite{Goebbels19} for $r=2$. Such inverse estimates were proposed in
\cite{Wang10} and \cite{Xu2006}.
Functions $f_n(x):=\exp(-nx)$
fulfill $\|f_n\|_{B[0,1]}=1$ and
\begin{alignat*}{1}
(1-e^{-1})^r &=
\left|\sum_{k=0}^r (-1)^k \binom{r}{k} f_n\left(\frac{k}{n}\right)
\right| \leq \omega_r\left(f_n, \frac{1}{n}\right).
\end{alignat*}
For $x\leq 0$, one can uniformly approximate $e^x$ arbitrarily well by assigning
values to $a$, $b$, $c$ and $d$ in $a\sigma(bx+c)+d$. 
Thus, each function $f_n(x)$ can be approximated uniformly 
by $a \sigma(-nb x+c)+d$ on $[0, 1]$ such that 
$E(\Phi_{k,\sigma}, f_n)=0$, $k>1$.
Using $f_n$ with formula (\ref{proto}) implies 
\begin{equation}
(1-e^{-1})^r  \leq \frac{C_r}{n^r}[E(\Phi_{1,\sigma}, f_n) +
\|f_n\|_{B[0,1]}] \leq \frac{2 C_r}{n^r}\label{antiproto}
\end{equation}
which is obviously wrong for $n\to\infty$. The same problem occurs with
(\ref{lb}).

The ``nearly exponential'' property only fits with certain activation
functions 
but it does not require continuity. For example, let $h(x)$ be
the Dirichlet function that is one for rational and zero for irrational
numbers $x$.
Activation function $\exp(x)(1+\exp(x)h(x))$ is nowhere
continuous but nearly exponential: For $\varepsilon>0$ let $c=\ln(\varepsilon)$
and $a=\exp(-c)$, $b=1$, then for $x\leq 0$ 
$$\left| e^x- a e^{x+c}(1+e^{x+c}h(x+c))\right|
=  e^{2x+c}h(x+c) \leq e^c =\varepsilon.$$
But a bound can also be obtained from arbitrarily often differentiability.
Let $\sigma$ be arbitrarily often differentiable on some open interval such
that $\sigma$ is no polynomial on that interval. Then one can easily obtain an
estimate in terms of the $r$th modulus from the Jackson estimate (\ref{jacksonzitat}) by 
considering that polynomials of degree at most $n-1$ can be
approximated arbitrarily well by functions in $\Phi_{n,\sigma}$, see
\cite[Corollary 3.6, p.~157]{pinkus_1999}, 
cf.~\cite[Theorem 3.1]{Kurkova98}.
The idea is to approximate monomials by differential quotients of $\sigma$. This is possible since derivative
\begin{equation}
 \frac{\partial^k}{\partial b^k} \sigma(bx+c) = x^k
 \sigma^{(k)}(bx+c)\label{trick}
\end{equation}
at $b=0$ equals $\sigma^{(k)}(c) x^k$. 
Constants $\sigma^{(k)}(c)\neq 0$ can be chosen because polynomials are
excluded. The following estimate extends
\cite[Theorem 6.8, p.~176]{pinkus_1999} in the univariate case to moduli of
smoothness.

\begin{theorem}
Let $\sigma:\mathbb{R}\to\mathbb{R}$ be arbitrarily often differentiable
on some open interval in $\mathbb{R}$ and let $\sigma$ be no polynomial on that
interval, $f\in X^p[0, 1]$, $1\leq p\leq \infty$,
and $r\in\mathbb{N}$.
Then, independently of $n\geq \max\{r, 2\}$ (or $n>r$) and $f$, a constant $C_r$
exists such that
\begin{equation}
E(\Phi_{n,\sigma}, f)_p \leq C_r
\omega_r\left(f,\frac{1}{n}\right)_p.\label{jacksonneu2}
\end{equation}\label{directsigmoid2}
\end{theorem}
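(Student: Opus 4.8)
The plan is to mirror the proof of Theorem~\ref{directsigmoid}, replacing the intermediate \emph{exponential} sum by an arbitrary algebraic polynomial that is approximated directly through differential quotients of $\sigma$. Fix $\varepsilon>0$. By the Jackson estimate (\ref{jacksonzitat}) there is a polynomial $p_{n-1}\in\Po^{n}$ of degree at most $n-1$ with $\|f-p_{n-1}\|_{X^p[0,1]}\le C\omega_r(f,1/(n-1))_p+\varepsilon$. Writing $p_{n-1}(x)=\sum_{k=0}^{n-1}d_k x^k$, it suffices to show each monomial $x^k$, $0\le k\le n-1$, can be approximated uniformly on $[0,1]$ to within $\varepsilon/(n(|d_k|+1))$ by a single term of the form $a\,\sigma(bx+c)$ (or a short fixed combination of such terms). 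Summing these gives a function in $\Phi_{n,\sigma}$ within $2\varepsilon$ of $f$, and letting $\varepsilon\to0$ yields $E(\Phi_{n,\sigma},f)_p\le C\omega_r(f,1/(n-1))_p$; the shift from $n-1$ to $n$ at the cost of a factor $2^r$ is exactly as in (\ref{verschiebung}).

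The monomial approximation is the technical heart, and it is where I would borrow Corollary~3.6 on p.~157 of \cite{pinkus_1999} (the identity (\ref{trick})). The idea: pick a point $c$ in the open interval of infinite differentiability where $\sigma$ is not a polynomial, so that for every $k$ there is a point $c_k$ in that interval with $\sigma^{(k)}(c_k)\ne0$ (otherwise $\sigma$ would be a polynomial on the interval). A symmetric finite difference of $\sigma(bx+c_k)$ in the parameter $b$, centred at $b=0$ with a small increment $t$, equals, by (\ref{trick}) and Taylor expansion, $\sigma^{(k)}(c_k)\,x^k\,t^k + O(t^{k+1})$ uniformly for $x\in[0,1]$; dividing by $\sigma^{(k)}(c_k)t^k$ and choosing $t$ small produces a fixed linear combination of at most $k+1$ terms $a_j\sigma(b_j x+c_k)$ approximating $x^k$ uniformly on $[0,1]$ as closely as desired. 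One must be careful that all the perturbed arguments $b_j x+c_k$ stay inside the open interval where $\sigma$ is smooth; since $x$ ranges over the compact set $[0,1]$ and $b_j=O(t)\to0$, this holds once $t$ is small enough.

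A bookkeeping point: the naive count of terms is $\sum_{k=0}^{n-1}(k+1)=\binom{n+1}{2}$, far more than $n$. This is resolved exactly as in \cite[Corollary~3.6]{pinkus_1999}: one approximates the \emph{whole} polynomial $p_{n-1}$, not monomial by monomial. Since $x\mapsto p_{n-1}(bx+c)$, as a function of $b$ near $0$, has $k$-th $b$-derivative at $b=0$ a polynomial in $x$ of degree $k$, a single $n$-point divided-difference scheme in $b$ (nodes $b=j t$, $j=0,\dots,n-1$, say) reproduces $p_{n-1}(x)$ up to $O(t)$ uniformly on $[0,1]$, using only $n$ evaluations $\sigma(jt\cdot x+c)$ — hence $n$ terms, i.e. an element of $\Phi_{n,\sigma}$. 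Choosing $t$ small makes this error below $\varepsilon$. The constant $C_r$ is the Jackson constant $C$ from (\ref{jacksonzitat}), independent of $f$ and $n$.

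I expect the main obstacle to be purely technical: verifying uniformly in $x\in[0,1]$ that the divided difference of $p_{n-1}(bx+c)$ in $b$ converges to $p_{n-1}(x)$ as the step $t\to0$, with an error bound independent of $f$ — this is a Taylor-remainder estimate whose constant depends on $\sup$ of high derivatives of $\sigma$ on a fixed compact subinterval and on the (fixed) combinatorial divided-difference weights, and one must keep the coefficients $d_k$ of $p_{n-1}$ from entering the final constant, which they do not because the divided-difference identity is linear and reproduces $p_{n-1}$ exactly in the limit. Everything else is a direct transcription of the argument of Theorem~\ref{directsigmoid} with ``nearly exponential'' replaced by ``locally smooth, locally non-polynomial''.
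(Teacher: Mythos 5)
Your proposal is correct and follows essentially the same route as the paper: Jackson's estimate (\ref{jacksonzitat}) to get a near-best polynomial, then the fact that polynomials of degree at most $n-1$ lie in the closure of $\Phi_{n,\sigma}$ (which the paper simply cites as \cite[Corollary 3.6, p.~157]{pinkus_1999} rather than re-deriving via the divided-difference argument you sketch), and finally the index shift of (\ref{verschiebung}). The extra detail you supply is a proof sketch of the cited Pinkus result, not a different method.
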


This theorem can be applied to $\sigma_l$ but also to $\sigma_a$ and $\sigma_e$.
The preliminaries are also fulfilled for $\sigma(x):=\sin(x)$, a function that
is obviously not nearly exponential.

\begin{proof}
Let $\varepsilon>0$.
As in the previous proof, there exists a polynomial
$p_n$ of degree at most $n$ such that (\ref{jacksonalt}) holds. 
Due to \cite[p.~157]{pinkus_1999} there exists a function
$g_{n,\varepsilon}\in \Phi_{n+1,\sigma}$ such that  $\|g_{n,\varepsilon} -
p_n\|_{B[0,1]}<\varepsilon$. This gives
$$
E(\Phi_{n + 1,\sigma}, f)_p \leq \| f-p_n\|_{X^p[0, 1]} +  \| p_n -
g_{n,\varepsilon}\|_{B[0, 1]}
\leq C \omega_r\left(f,\frac{1}{n}\right)_p +
2\varepsilon.
$$
Since $\varepsilon$ can be chosen arbitrarily, we get (\ref{jacksonneu2}) via
equation (\ref{verschiebung}).
\end{proof}
%
Polynomials in the closure of approximation spaces can be utilized to show that
a direct lower bound in terms of a (uniform) modulus of smoothness is not possible.
\begin{lemma}[Impossible Inverse Estimate]\label{lower}
Let activation function $\varphi$ be given as in the preceding
Theorem \ref{directsigmoid2}, $r\in\mathbb{N}$. For each
positive, monotonically decreasing sequence $(\alpha_n)_{n=1}^\infty$,
$\alpha_n>0$, and each $0<\beta<1$ a counterexample $f_\beta\in
C[0, 1]$ exists such that (for $n\to\infty$)
\begin{alignat}{1}
&\omega_r\left(f_\beta,\frac{1}{n}\right) \leq C
\frac{1}{n^{r\beta}},\quad \omega_r\left(f_\beta,\frac{1}{n}\right) \neq
o\left(\frac{1}{n^{r\beta}}\right),\label{auslip}\\
&\limsup_{n\to\infty}
\frac{\omega_r\left(f_\beta,\frac{1}{n}\right)}{E(\Phi_{n,\sigma},
f_\beta)}\alpha_n > 0\text{, i.e., }
\omega_r\left(f_\beta,\frac{1}{n}\right) \neq o\left(\frac{1}{\alpha_n}
E(\Phi_{n,\sigma}, f_\beta)\right).
\label{gehtnicht}
\end{alignat}
\end{lemma}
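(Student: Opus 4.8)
The plan is to construct $f_\beta$ as a lacunary (Weierstrass-type) series whose moduli of smoothness decay exactly like $n^{-r\beta}$, and simultaneously to exploit that $\sigma$ is non-polynomial and $C^\infty$ on an open interval, so that polynomials of degree $\le n-1$ lie in the closure of $\Phi_{n,\sigma}$. The key tension I want to engineer is: the modulus $\omega_r(f_\beta,1/n)$ is comparable to $n^{-r\beta}$ from above \emph{and} from below (this gives (\ref{auslip})), while the approximation error $E(\Phi_{n,\sigma},f_\beta)$ is \emph{not} of order $n^{-r\beta}$ along a suitable subsequence — in fact it drops much faster at the lacunary scales where a long polynomial partial sum is already a near-perfect approximant. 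That mismatch is what makes the quotient in (\ref{gehtnicht}) blow up along a subsequence.

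First I would fix a rapidly increasing lacunary sequence $n_j$ (growing fast enough relative to the given sequence $(\alpha_n)$, e.g. $n_{j+1} \ge 2^{n_j}/\alpha_{n_j}$ or similar, chosen at the end once the other estimates are in hand) and set
\[
 f_\beta(x) := \sum_{j=1}^\infty \frac{1}{n_j^{r\beta}}\, \psi\!\left(n_j x\right),
\]
where $\psi$ is a fixed smooth (or piecewise-polynomial) $1$-periodic function with $\int$-normalisation chosen so that its $r$th differences at scale $1/n_j$ do not cancel — a standard choice making $\omega_r(\psi(n_j\cdot),\delta)$ behave like $\min\{(n_j\delta)^r,1\}$. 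Standard lacunary estimates (splitting the sum at the index $j$ with $n_j \approx 1/\delta$, bounding low-frequency terms by the $(n_j\delta)^r$ factor and high-frequency terms by $2^r$ times their coefficients) then give $\omega_r(f_\beta,\delta)_\infty \le C\,\delta^{r\beta}$, i.e. the first half of (\ref{auslip}); evaluating the $r$th difference of the single term $n_k^{-r\beta}\psi(n_k\cdot)$ at step $1/n_k$ and using lacunarity to show the other terms contribute lower-order gives $\omega_r(f_\beta,1/n_k) \ge c\, n_k^{-r\beta}$, hence $\omega_r(f_\beta,1/n) \ne o(n^{-r\beta})$, the second half of (\ref{auslip}).

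Next, to control $E(\Phi_{n,\sigma},f_\beta)$ from above at the lacunary scales: by Theorem \ref{directsigmoid2} (with the same $r$), $E(\Phi_{m,\sigma},f_\beta)_\infty \le C_r\,\omega_r(f_\beta,1/m)$ for all $m$; but I want a \emph{faster} bound along a subsequence. For this I would approximate the tail $\sum_{j> J}$ crudely (its sup-norm is $\le \sum_{j>J} n_j^{-r\beta}$, which by fast lacunarity is dominated by, say, $2 n_{J+1}^{-r\beta}$, far smaller than $n_{J+1}^{-r\beta}$ is relative to $n_J$), while the head $\sum_{j\le J}$ is a fixed $C^\infty$ function that, by the Jackson estimate (\ref{jacksonzitat}) applied at degree proportional to $n_J$, is approximated by algebraic polynomials of degree $\lesssim n_J$ to within $C\,\omega_r(\text{head},1/n_J)$, which is tiny because the head has bounded higher derivatives at scale $n_J$. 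Since polynomials of degree $\le n-1$ are in the closure of $\Phi_{n,\sigma}$ (the argument around (\ref{trick})), this yields $E(\Phi_{n,\sigma},f_\beta) = o(n^{-r\beta})$ — indeed much smaller — along $n = n_J \to\infty$. Combining with the lower bound $\omega_r(f_\beta,1/n_J) \ge c\,n_J^{-r\beta}$ makes the quotient $\omega_r / E$ tend to infinity along $n_J$, and choosing $n_j$ to grow fast enough against $\alpha_{n_j}$ forces $\omega_r(f_\beta,1/n_j)\,\alpha_{n_j}/E(\Phi_{n_j,\sigma},f_\beta) \to \infty$, giving (\ref{gehtnicht}).

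The main obstacle I anticipate is the bookkeeping in the head/tail split: I need the lacunary gaps $n_{j+1}/n_j$ large enough simultaneously for (i) the modulus lower bound at scale $1/n_k$ (tail terms must not cancel the leading $r$th difference), (ii) the tail sup-norm bound to be negligible at the relevant precision, and (iii) the growth against $(\alpha_n)$. Since $(\alpha_n)$ is only assumed positive and monotonically decreasing (it may decay arbitrarily slowly, so $1/\alpha_n$ grows arbitrarily slowly — actually this is the easy direction) or, conversely, if one reads it as possibly decaying fast, then $1/\alpha_n$ may be huge and I simply need $n_{j+1}$ defined recursively in terms of $\alpha_{n_j}$ and the already-chosen constants. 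A secondary point to handle carefully is that (\ref{auslip}) and (\ref{gehtnicht}) must hold for the \emph{same} $f_\beta$ and must be compatible: the error decays faster than $\omega_r$ only along the subsequence $n_j$, not for all $n$, so I should present (\ref{gehtnicht}) as a $\limsup$ statement, which matches exactly what is asserted. No Bernstein-type inequality is needed — the construction sidesteps it, which is the whole point of using counterexamples rather than a genuine inverse theorem.
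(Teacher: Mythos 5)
Your overall architecture (a lacunary gliding-hump series, a head/tail split with the gaps $n_{j+1}$ chosen recursively against $\alpha_{n_j}$, and the transfer from polynomial approximation to $\Phi_{n,\sigma}$ via the closure property around (\ref{trick})) is exactly the mechanism the paper uses — the paper just packages it by invoking the resonance theorem \cite[Theorem 2.1]{Dickmeis84}. However, there is a genuine gap in your choice of building block. With $f_\beta = \sum_j n_j^{-r\beta}\psi(n_j x)$ for a periodic $\psi$, your claim that the head $\sum_{j\le J}$ is approximated by polynomials of degree $\lesssim n_J$ to within something that is $o(n_J^{-r\beta})$ is false: the top term $n_J^{-r\beta}\psi(n_J\cdot)$ oscillates at frequency $n_J$, so $\omega_s(\text{head},1/n_J) \asymp n_J^{-s}\cdot n_J^{-r\beta}n_J^{s} = n_J^{-r\beta}$ for \emph{every} order $s$ — your phrase ``the head has bounded higher derivatives at scale $n_J$'' does not hold for that term. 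Jackson at degree $n_J$ therefore only gives $E(\Pi^{n_J+1},\text{head}) = O(n_J^{-r\beta})$, the same order as $\omega_r(f_\beta,1/n_J)$, and indeed (by the sign-change/zero-counting argument of Lemma \ref{laresonanz}) one expects a matching lower bound, so the quotient $\omega_r/E$ stays bounded at the scales $n=n_J$ where your modulus lower bound lives. Retreating to intermediate $n$ with $n_J\ll n\ll n_{J+1}$ does not rescue this for arbitrary $(\alpha_n)$, because there the modulus itself decays like $(n_J/n)^r n_J^{-r\beta}$ and the required inequality $\omega_r(f_\beta,1/n)\,\alpha_n \ge cE$ fails once $\alpha_n$ decays faster than the (at best geometric) polynomial-approximation error of the fixed smooth head.

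The fix is precisely what the paper does: take the building blocks to be monomials, $f_\beta=\sum_k n_k^{-r\beta}x^{n_k}$ (resonance elements $h_j(x)=x^j$ in \cite[Theorem 2.1]{Dickmeis84}). These are \emph{exactly} reproduced by $\Pi^{n_k+1}$, so the head contributes zero to $E(\Pi^{n_k+1},f_\beta)$ and the error is controlled solely by the tail $\sum_{j>k}n_j^{-r\beta}$, which the recursive choice of $n_{k+1}$ makes smaller than $\varepsilon\,\alpha_{n_k}n_k^{-r\beta}$; yet they still resonate with the modulus at the endpoint, since the $r$th difference of $x^{n_k}$ at $x=1-r/n_k$ with step $1/n_k$ tends to $(1-1/e)^r>0$. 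Everything else in your outline — the upper bound $\omega_r(f_\beta,\delta)\le C\delta^{r\beta}$ by splitting the sum at $n_j\approx 1/\delta$, the non-cancellation of lower- and higher-frequency terms in the resonance estimate, and the final passage $E(\Phi_{n+1,\sigma},f_\beta)\le E(\Pi^{n+1},f_\beta)$ — then goes through as you describe.
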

Even if the constant $C=C_f>0$ may depend on $f$ (but not on $n$), 
estimate (\ref{lb}), 
as proposed in a similar context in \cite{Wang10} for $r=2$, does not apply.
\begin{proof}
For best trigonometric approximation, comparable counter
examples are constructed in \cite[Corollary 3.1]{Dickmeis84} based on a general
resonance theorem \cite[Theorem 2.1]{Dickmeis84}. We apply this theorem to show
a similar result for best algebraic polynomial approximation, i.e., there exists
$f_\beta\in C[0, 1]$ such that (\ref{auslip}) and
\begin{equation} 
\limsup_{n\to\infty}
\frac{\omega_r\left(f_\beta,\frac{1}{n}\right)}{E(\Pi^{n+1},
f_\beta)}\alpha_{n+1} > 0.\label{lowerp}
\end{equation}
We choose parameters of \cite[Theorem 2.1]{Dickmeis84} as follows:
$X=C[0, 1]$, $\Lambda=\{r\}$, $T_n(f) =
S_{n,r}(f):= \omega_r\left(f,\frac{1}{n}\right)$ 
such that
$C_2=C_{5,n}=2^r$, $\omega(\delta)=\delta^\beta$. 
Different to \cite[Corollary
3.1]{Dickmeis84} we use resonance functions $h_j(x):=x^j$, $j\in\mathbb{N}$, such that $C_1=1$.
We further set
$\varphi_n=\tau_n=\psi_n=1/n^r$ such that with (\ref{gegenAbl})
$$|T_n(h_j)| \leq \frac{1}{n^r}\|h_j^{(r)}\|_{B[0, 1]} \leq 
\frac{\varphi_n}{\varphi_j},\quad
 S_{n,r}(h_j) \leq \frac{\varphi_n}{\varphi_j} =:
C_{6,j}\varphi_n=C_{6,j}\tau_n. $$
We compute an $r$-th difference of $x^n$ at the interval endpoint 1 to get
the resonance condition
\begin{alignat*}{1}
&\liminf_{n\to\infty}|S_{n,r}(h_n)| =
\liminf_{n\to\infty} \omega_r\left(x^n,\frac{1}{n}\right) \geq
\lim_{n\to\infty} \sum_{k=0}^r (-1)^k \binom{r}{k}
\left(1-\frac{k}{n}\right)^n\\
&= \sum_{k=0}^r (-1)^k \binom{r}{k} \exp(-k) = \sum_{k=0}^r \binom{r}{k}
\left(-\frac{1}{e}\right)^k =  \left(1-\frac{1}{e}\right)^r =: C_{7,r}>0.
\end{alignat*}
Further, let $R_n(f):=  E(\Pi^{n+1}, f)$ such that $R_n(h_j)=0$ for $j\leq n$, i.e., 
$0=R_n(h_j)\leq C_4\frac{\rho_n}{\psi_n}$ for
$\rho_n:=\alpha_{n+1}\psi_n$.
Then, due to \cite[Theorem 2.1]{Dickmeis84}, $f_\beta\in C[0, 1]$ exists such
that (\ref{auslip}) and (\ref{lowerp}) are fulfilled.

Since polynomials of $\Pi^{n+1}$ are in the closure of $\Phi_{n+1,\sigma}$
according to \cite[p.~157]{pinkus_1999}, i.e., 
$E(\Phi_{n+1,\sigma}, f_\beta) \leq E(\Pi^{n+1}, f_\beta)$, we obtain
(\ref{gehtnicht}) from (\ref{lowerp}):
\begin{alignat*}{1}
\limsup_{n\to\infty}
\frac{\omega_r\left(f_\beta,\frac{1}{n+1}\right)}{E(\Phi_{n+1,\sigma},
f_\beta)}\alpha_{n+1} &\geq
\limsup_{n\to\infty}
\frac{\frac{1}{2^r}
\omega_r\left(f_\beta,\frac{1}{n}\right)}{E(\Phi_{n+1,\sigma},
f_\beta)}\alpha_{n+1}\\ 
&\geq \limsup_{n\to\infty} \frac{1}{2^r}
\frac{\omega_r\left(f_\beta,\frac{1}{n}\right)}{E(\Pi^{n+1},
f_\beta)}\alpha_{n+1} > 0.
\end{alignat*}
\end{proof}

\section{A Uniform Boundedness Principle with Rates}\label{secUBP}

In this paper, sharpness results are proved with a quantitative
extension of the classical uniform boundedness principle of Functional Analysis. 
Dickmeis, Nessel and van Wickern developed several versions of such theorems. 
We already used one of them in the proof of Lemma \ref{lower}. 
An
overview of applications in Numerical Analysis can be found in
\cite[Section 6]{Goebbels13b}.
The given paper is based on \cite[p.~108]{DiNeWi2}.
This and most other versions require error functionals to be sub-additive.
Let $X$ be a normed space. A functional $T$ on $X$, i.e., $T$ maps $X$ into
$\mathbb{R}$, is said to be (non-negative-valued) sub-linear and bounded, iff
for all $f,g\in X,\, c\in\mathbb{R}$
\begin{alignat*}{3}
 &T(f) \geq 0,~ T(f+g) \leq T(f) + T(g) &\quad&\text{(sub-additivity)}, \\
 &T(c f) = |c|T(f) &&\text{(absolute homogeneity)},\\
 &\| T\|_{X^\sim} := \sup\{ T(f) : \|f\|_X\leq 1\} < \infty &&\text{(bounded
 functional)}.
\end{alignat*}
The set of non-negative-valued
sub-linear bounded functionals $T$ on $X$ is denoted by
$X^\sim$.
Typically, errors of best approximation are (non-negative-valued) sub-linear
bounded functionals.
Let $U\subset X$ be a linear subspace. The best approximation of $f\in X$ by
elements $u\in U\neq\emptyset$ is defined as $E(f):=\inf\{ \|f-u\|_X :
u\in U\}$. Then $E$ is sub-linear: $E(f+g)\leq E(f)+E(g)$, $E(cf) = |c|E(f)$ for
all $c\in\mathbb{R}$. Also, $E$ is bounded: $E(f)\leq \|f-0\|_X=\|f\|_X$.

Unfortunately, function sets $\Phi_{n,\sigma}$ are not linear spaces,
cf.\ \cite[p.~151]{pinkus_1999}. In general, from $f,g\in \Phi_{n,\sigma}$ one
can only conclude $f+g\in \Phi_{2n,\sigma}$ whereas $cf\in \Phi_{n,\sigma}$, $c\in\mathbb{R}$.
Functionals of best approximation fulfill $E(\Phi_{n,\sigma}, f)_p \leq
\|f-0\|_{X^p[0, 1]}=\|f\|_{X^p[0, 1]}$. Absolute homogeneity $E(\Phi_{n,\sigma},
cf)_p =|c|E(\Phi_{n,\sigma}, f)_p$ is obvious for $c=0$. If $c\neq 0$,
\begin{alignat*}{1}
E(\Phi_{n,\sigma}, cf)_p &= \inf\left\{ \left\|cf - \sum_{k=1}^n a_k
\sigma(b_k x+c_k)\right\|_{X^p[0, 1]} :
a_k,b_k,c_k\in\mathbb{R}\right\}\\ 
&= |c| \inf\left\{ \left\| f - \sum_{k=1}^n \frac{a_k}{c}
\sigma(b_k x+c_k)\right\|_{X^p[0, 1]} :
\frac{a_k}{c},b_k,c_k\in\mathbb{R}\right\}\\ 
&=|c|
E(\Phi_{n,\sigma}, f)_p.
\end{alignat*}

But there is no sub-additivity. However, it is easy
to prove a similar condition:
For each $\varepsilon>0$ there exists elements
$u_{f,\varepsilon},u_{g,\varepsilon}\in \Phi_{n,\sigma}$ that fulfill
$$ \|f-u_{f,\varepsilon}\|_{X^p[0, 1]}\leq
E(\Phi_{n,\sigma}, f)_p+\frac{\varepsilon}{2},\quad \|
g-u_{g,\varepsilon}\|_{X^p[0, 1]}\leq E(\Phi_{n,\sigma},
g)_p+\frac{\varepsilon}{2} $$ and $u_{f,\varepsilon}+u_{g,\varepsilon}\in
\Phi_{2n,\sigma}$ such that
\begin{alignat*}{1}
 \lefteqn{E(\Phi_{2n,\sigma}, f+g)_p \leq \|f-u_{f,\varepsilon} +
g-u_{g,\varepsilon}\|_{X^p[0, 1]}}\\ 
&\leq \|f-u_{f,\varepsilon}\|_{X^p[0, 1]} +
\|g-u_{g,\varepsilon}\|_{X^p[0, 1]} \leq E(\Phi_{n,\sigma},
f)_p+E(\Phi_{n,\sigma}, g)_p+\varepsilon, 
\end{alignat*} 
i.e., 
\begin{equation}
E(\Phi_{2n,\sigma}, f+g)_p \leq
E(\Phi_{n,\sigma}, f)_p+E(\Phi_{n,\sigma}, g)_p.\label{nonsublin}
\end{equation}
Obviously, also $E(\Phi_{n,\sigma}, f)_p \geq E(\Phi_{n+1,\sigma}, f)_p$ holds
true.

In what follows, a quantitative extension of the uniform boundedness
principle based on these conditions is presented. The conditions replace
sub-ad\-di\-tivi\-ty.
Another extension of the uniform boundedness principle to non-sub-linear functionals is
proved in \cite{Dickmeis1985}.
But this version of the theorem is stated for a family of error functionals
with two parameters that has to fulfill a condition of quasi lower
semi-continuity. Functionals $S_\delta$ measuring smoothness also do not
need to be sub-additive but have to fulfill a condition $S_\delta(f+g)\leq
B(S_\delta(f)+S_\delta(g))$ for a constant $B\geq 1$. This theorem does not
consider replacement (\ref{nonsublin}) for sub-additivity.

Both rate of convergence and size of moduli of smoothness can be expressed by
abstract moduli of smoothness, see \cite[p.~96ff]{Timan63}.
Such an abstract modulus of smoothness is defined as a continuous, increasing
function $\omega$ on $[0,\infty)$ 
that fulfills
\begin{equation}
 0=\omega(0)<\omega(\delta_1)\leq\omega(\delta_1+\delta_2)
 \leq\omega(\delta_1)+\omega(\delta_2) \label{ABSMOD}
\end{equation}
for
all $\delta_1,\delta_2>0$. 
It has similar
properties as $\omega_r (f, \cdot)$, and
it directly follows for $\lambda>0$ that
\begin{equation}
\omega(\lambda \delta) \leq \omega(\lceil \lambda\rceil \delta) \leq \lceil
\lambda\rceil \omega(\delta) \leq (\lambda+1)  \omega(\delta).\label{lomega}
\end{equation}
Due to continuity, $\lim_{\delta\to 0+} \omega(\delta)=0$ holds.
For all $0<\delta_1\leq\delta_2$, equation (\ref{lomega}) also implies
\begin{equation}
\frac{\omega(\delta_2)}{\delta_2} 
= \frac{\omega\left(\frac{\delta_2}{\delta_1}\delta_1\right)}{\delta_2}
\leq \frac{\left(1+\frac{\delta_2}{\delta_1}\right) \omega(\delta_1)}{\delta_2}
= \frac{\left(\frac{\delta_1}{\delta_2}+1\right) \omega(\delta_1)}{\delta_1}
\leq 2
\frac{\omega(\delta_1)}{\delta_1}.\label{la2c}
\end{equation}

Functions $\omega(\delta):=\delta^\alpha$,
$0 < \alpha\leq 1$, are examples for abstract moduli of smoothness. They are
used to define Lipschitz classes.

The aim is to discuss a sequence of remainders (that will be errors of best
approximation) $(E_{n})_{n=1}^\infty$, $E_n : X\to [0,\infty)$. These functionals
do not have to be sub-linear but instead have to fulfill
\begin{alignat}{1}
E_{m\cdot n}\left(\sum_{k=1}^m f_k\right) &\leq \sum_{k=1}^m
E_n(f_k) \text{ (cf. (\ref{nonsublin}))}\label{semisub}\\
E_n(cf) &= |c| E_n(f)\label{semilin}\\
E_n(f) &\leq D_n \|f\|_X\label{bounded}\\
E_n(f) &\geq E_{n+1}(f)\label{mono}
\end{alignat}
for all $m\in\mathbb{N}$, $f, f_1, f_2,\dots, f_m\in X$, and constants
$c\in\mathbb{R}$.
In the boundedness condition (\ref{bounded}), $D_n$ is a constant only depending on $E_n$ but not on $f$.

\begin{theorem}[Adapted Uniform Boundedness Principle]\label{ZUBP} Let
$X$ be a (real) Banach space with norm $\|\cdot\|_X$.
Also, a sequence $(E_{n})_{n=1}^\infty$, $E_n : X\to [0,\infty)$ is given that fulfills
conditions (\ref{semisub})--(\ref{mono}).
To measure smoothness, sub-linear bounded functionals $S_\delta \in
X^\sim$ are used for all $\delta>0$.

Let $\mu(\delta): (0,\infty)\to  (0,\infty)$ be a positive function, and 
let $\varphi: [1,\infty)\to  (0,\infty)$ be a strictly decreasing
function with $\lim_{x\to\infty} \varphi(x)=0$. An additional requirement is
that for each $0<\lambda<1$ a point $X_0=X_0(\lambda)\geq \lambda^{-1}$ and
constant $C_\lambda>0$ exist such that
\begin{equation}
\varphi(\lambda x)\leq
C_\lambda \varphi(x)\label{faktor}
\end{equation}
for all $x>X_0$. 

If there exist test elements $h_n\in X$ such that for all
$n\in\mathbb{N}$ with $n\geq n_0\in\mathbb{N}$ and $\delta>0$
\begin{alignat}{1}
 \| h_n\|_X &\leq C_1,
           \label{ZUBP1}\\
  S_\delta(h_n) &\leq C_2 \min \left\{ 1,\frac{\mu(\delta)}{\varphi(n)}
           \right\},
           \label{ZZUBP3}\\
  E_{4n}(h_n) &\geq c_{3} >0,\label{ZUBP3}
\end{alignat}
then for each abstract modulus of smoothness $\omega$ satisfying 
\begin{equation}
  \lim_{\delta\to 0+}\frac{\omega(\delta)}{\delta}=\infty\label{odd}
\end{equation}
there exists a counterexample $f_\omega\in
X$ such that $(\delta\to 0+,\, n\to\infty)$
\begin{alignat}{1}
 S_\delta(f_\omega) &= {O}\left(\omega(\mu(\delta))\right),\label{smooth}\\
 E_{n}(f_\omega) &\not= o(\omega(\varphi(n)))\text{, i.e., }
 \limsup_{n\to\infty} \frac{E_{n}(f_\omega)}{\omega(\varphi(n))} > 0.\label{counter}
\end{alignat}
\end{theorem}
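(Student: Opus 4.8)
The plan is to construct $f_\omega$ via a gliding-hump / condensation-of-singularities argument built from the test elements $h_n$, exactly in the spirit of the Dickmeis--Nessel--van Wickern quantitative uniform boundedness machinery (as in \cite[p.~108]{DiNeWi2}). First I would fix a sequence of ``resonance indices'' $n_1 < n_2 < \dots$ growing fast enough, and a sequence of positive coefficients $\gamma_j$, and set $f_\omega := \sum_{j=1}^\infty \gamma_j h_{n_j}$. Convergence of the series in $X$ follows from \eqref{ZUBP1}: $\sum_j \gamma_j \|h_{n_j}\|_X \le C_1 \sum_j \gamma_j < \infty$ provided $\sum_j \gamma_j$ converges, which I will arrange. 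The natural choice, dictated by \eqref{odd} and the usual trick in these proofs, is to pick $\gamma_j$ comparable to $\omega(\varphi(n_j))$ up to a summable correction; concretely one takes $\gamma_j \approx \omega(\varphi(n_j))$ and thins the indices $n_j$ so rapidly that $\sum_j \omega(\varphi(n_j))$ converges (possible because $\varphi(x)\to 0$ and $\omega$ is continuous with $\omega(0)=0$, so $\omega(\varphi(n_j))$ can be made smaller than $2^{-j}$ by taking $n_j$ large).

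Next I would verify the smoothness estimate \eqref{smooth}. Using sub-additivity and absolute homogeneity of $S_\delta \in X^\sim$, split the sum at the index where $\varphi(n_j)\approx \mu(\delta)$: for the ``low'' terms ($\varphi(n_j) \ge \mu(\delta)$, equivalently $n_j$ small relative to $\delta$) use the bound $S_\delta(h_{n_j}) \le C_2 \mu(\delta)/\varphi(n_j)$ from \eqref{ZZUBP3}, and for the ``high'' terms use the bound $S_\delta(h_{n_j}) \le C_2$. The low part contributes $\le C_2 \mu(\delta) \sum_{\text{low}} \gamma_j/\varphi(n_j)$, and since $\gamma_j \approx \omega(\varphi(n_j))$ and $\omega(t)/t$ is (up to a factor $2$, by \eqref{la2c}) monotone, this telescopes against $\omega(\mu(\delta))/\mu(\delta)$ to give $O(\omega(\mu(\delta)))$; the high part contributes $\le C_2 \sum_{\text{high}} \gamma_j \approx C_2 \sum_{\text{high}} \omega(\varphi(n_j))$, which is the tail of a convergent series dominated by its first term $\omega(\varphi(n_{j_0})) \le \omega(\mu(\delta))$ (monotonicity of $\omega$ and $\varphi$). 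Summing the two gives \eqref{smooth}. The condition \eqref{faktor} is what lets me absorb the mismatch between $\varphi(n_j)$ and $\mu(\delta)$ at the splitting index into a constant $C_\lambda$.

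For the lower bound \eqref{counter} I would isolate a single hump. Fix $N = n_J$ for a large resonance index. Write $f_\omega = \gamma_J h_{n_J} + R$, where $R = \sum_{j\ne J}\gamma_j h_{n_j}$. Apply \eqref{semisub} with $m=2$: $E_{4n_J}(f_\omega) \ge$ ... the wrong direction — instead I use it as $E_{2\cdot(2n_J)}(\gamma_J h_{n_J}) \le E_{2n_J}(f_\omega) + E_{2n_J}(-R)$, rearranged to $E_{2n_J}(f_\omega) \ge \gamma_J E_{4n_J}(h_{n_J}) - E_{2n_J}(R) \ge c_3\gamma_J - E_{2n_J}(R)$ using \eqref{semilin} and \eqref{ZUBP3} — wait, I must also split $R = R_{<} + R_{>}$ into terms with $n_j < n_J$ and $n_j > n_J$. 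For $R_{>}$ (finitely-indexed-from-above but here the infinite tail): by monotonicity \eqref{mono}, $E_{2n_J}(R_>) \le \sum_{j>J}\gamma_j E_{2n_J}(h_{n_j})$, and thinning $n_j$ fast enough ensures $E_{2n_J}(h_{n_j})$ is controlled — actually the cleanest route, and the one these papers use, is to bound $E_{2n_J}(R_>) \le \|R_>\|_X \le C_1\sum_{j>J}\gamma_j$ via \eqref{bounded} with... no, $D_n$ need not be bounded. So instead: choose the $\gamma_j$ for $j > J$ so small (super-exponentially) that even $D_{2n_J}\sum_{j>J}\gamma_j \le \tfrac{c_3}{4}\gamma_J$ — this is a diagonal choice made after $n_J$ is fixed, which is fine. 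For $R_<$, the key point is that the $h_{n_j}$ with small $n_j$ are ``smooth at scale $1/n_J$'': I would need an estimate like $E_{2n_J}(h_{n_j}) \le C S_{1/n_J}(h_{n_j}) \le C_2 \mu(1/n_J)/\varphi(n_j)$ for $j < J$ — but this requires a direct estimate connecting $E_n$ and $S_{1/n}$, which is \emph{not} among the hypotheses.

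The honest obstacle — and I expect this is exactly where the actual proof does something more clever — is handling $R_<$, the ``already-placed'' low-frequency humps, since the abstract hypotheses \eqref{semisub}--\eqref{mono} give no Jackson-type link between $E_n$ and the smoothness functionals $S_\delta$. The standard resolution in the Dickmeis--Nessel--van Wickern framework is to not estimate $E_{2n_J}(R_<)$ directly but to build $f_\omega$ so that the partial sums themselves lie in (a shift of) the approximating family: if $h_{n_j} = \varphi_{n_j}$ is chosen from the approximation space at ``level $\approx n_j$'', then $\sum_{j<J}\gamma_j h_{n_j}$ lies in the approximation space at level $\le$ const $\cdot n_{J-1} \ll n_J$, forcing $E_{2n_J}(R_<)$ to be small (even zero) by \eqref{mono}-type reasoning combined with the specific structure. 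Since here $h_n$ are merely abstract test elements, the argument instead must route the low-frequency error through \eqref{ZZUBP3} and a comparison principle hidden in $X^\sim$ — and I would expect the paper to either (a) additionally assume such a Jackson bound implicitly through the choice of $h_n$, or (b) use a more symmetric two-index version of \eqref{semisub} to cancel $R_<$ against a correction term. Modulo this one step, collecting $E_{2n_J}(f_\omega) \ge c_3\gamma_J - \tfrac{c_3}{4}\gamma_J - \tfrac{c_3}{4}\gamma_J = \tfrac{c_3}{2}\gamma_J \approx \tfrac{c_3}{2}\omega(\varphi(n_J))$, and since $2n_J \ge n_J$, monotonicity \eqref{mono} gives $E_{n}(f_\omega)/\omega(\varphi(n)) \ge$ const along a subsequence via \eqref{faktor} (to pass from $2n_J$ to $n_J$ inside $\varphi\circ\omega$), yielding \eqref{counter}.
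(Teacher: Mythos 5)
Your construction of $f_\omega=\sum_j\gamma_j h_{n_j}$ with $\gamma_j=\omega(\varphi(n_j))$, your splitting argument for the smoothness bound \eqref{smooth}, your rearranged sub-additivity $E_{n}(f+g)\geq E_{2n}(f)-E_n(g)$ for the hump, and your treatment of the high-frequency tail $R_>$ (choosing the later indices after $n_J$ is fixed so that $D_{2n_J}$ times the tail is dominated by $\tfrac{c_3}{4}\gamma_J$) all match the paper's proof. But the step you flag as the ``honest obstacle'' --- controlling $E_{2n_J}(R_<)$ for the already-placed low-frequency humps --- is a genuine gap, and neither of your two guesses for its resolution (a hidden Jackson-type link between $E_n$ and $S_\delta$, or membership of the partial sums in the approximating family) is what the paper does. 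The missing idea is an initial dichotomy: either some single test element $h_j$ already satisfies $\limsup_n E_n(h_j)/\omega(\varphi(n))>0$, in which case $f_\omega:=h_j$ finishes the proof immediately (its smoothness follows from \eqref{ZZUBP3} and \eqref{omegaab}), or else $\lim_n E_n(h_j)/\omega(\varphi(n))=0$ for \emph{every} $j$. In the second case one shows this limit is inherited by every fixed finite linear combination $\sum_{l=1}^m a_l h_{j_l}$, using \eqref{mono} to pass from $E_n$ to $E_{m\lfloor n/m\rfloor}$, then \eqref{semisub} and \eqref{semilin} to distribute over the summands, and \eqref{faktor} together with \eqref{lomega} to compare $\omega(\varphi(\lfloor n/m\rfloor))$ with $\omega(\varphi(n))$. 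This is precisely where hypothesis \eqref{faktor} earns its keep. With that in hand, the resonance indices are chosen \emph{inductively}: after $n_1,\dots,n_k$ are fixed, the head $\sum_{l=1}^k\omega(\varphi(n_l))h_{n_l}$ is a fixed finite sum, so $n_{k+1}$ can be taken large enough that $E_{n_{k+1}}(\text{head})\leq\omega(\varphi(n_{k+1}))/(k+1)$. No Jackson bound and no structural assumption on the $h_n$ is needed --- only the condensation-of-singularities reduction to the case where no finite combination resonates.

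A smaller remark: your tail estimate as sketched bounds $E_{2n_J}(R_>)$ by $D_{2n_J}\|R_>\|_X\leq D_{2n_J}C_1\sum_{j>J}\gamma_j$ via \eqref{bounded} and \eqref{ZUBP1}, and you correctly note that the $\gamma_j$ for $j>J$ must be chosen after $n_J$; in the paper this is encoded as the explicit inductive condition $D_{2n_k}\,\omega(\varphi(n_{k+1}))\leq\omega(\varphi(n_k))/k$, which is compatible with keeping $\gamma_j=\omega(\varphi(n_j))$ throughout (one only thins the indices, never retunes the coefficients). That detail aside, once the dichotomy is inserted your outline assembles into the paper's proof.
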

For example, (\ref{faktor}) is fulfilled for a standard choice
$\varphi(x)=1/x^\alpha$.

The prerequisites of the theorem differ from the Theorems of Dickmeis, Nessel,
and van Wickern in conditions (\ref{semisub})--(\ref{mono}) that replace
$E_n\in X^\sim$. It also requires additional constraint (\ref{faktor}). For
convenience, resonance condition (\ref{ZUBP3}) replaces $E_{n}(h_n) \geq c_{3}$.
Without much effort, (\ref{ZUBP3}) can be weakened to $\limsup_{n\to\infty} 
E_{4n}(h_n) > 0$. 

The proof is based on a gliding hump and follows the ideas
of \cite[Section 2.2]{DiNeWi2} (cf.\ \cite{Dickmeis1984}) for sub-linear
functionals
and the literature cited there. For the sake of completeness, the
whole proof is presented although changes were required only for estimates that
are effected by missing sub-additivity.

\begin{proof}
The first part of the proof is not concerned with sub-additivity or its
replacement.
If a test element $h_j$, 
$j\geq n_0$, exists that already fulfills
\begin{equation}
 \limsup_{n\to\infty} \frac{E_n(h_j)}{\omega(\varphi(n))} >0,\label{condsimple}
\end{equation}
then $f_\omega:=h_j$ fulfills (\ref{counter}). 
To show that this  $f_\omega$ also fulfills (\ref{smooth}), one needs
inequality
\begin{equation}
\min\{ 1,\delta\} \leq A\omega(\delta)
\label{omegaab}
\end{equation}
for all $\delta>0$.
This inequality follows from (\ref{la2c}): If $0<\delta<1$ then 
$\omega(1)/1 \leq 2 \omega(\delta)/\delta,
$ such that $\delta\leq 2\omega(\delta)/\omega(1)$. If $\delta>1$ then
$\omega(1)\leq\omega(\delta)$, i.e., $1 \leq \omega(\delta)/\omega(1)$.
Thus, one can choose $A=2/\omega(1)$.

Smoothness (\ref{smooth}) of test elements $h_j$ now follows from
(\ref{omegaab}):
\begin{alignat*}{1}
S_\delta(h_j) &\stackrel{\text{(\ref{ZZUBP3})}}{\leq} C_2  \min \left\{
1,\frac{\mu(\delta)}{\varphi(j)} \right\} 
\stackrel{\text{(\ref{omegaab})}}{\leq}
AC_2\omega\left(\frac{\mu(\delta)}{\varphi(j)}\right)\\ 
&\stackrel{\text{(\ref{lomega})}}{\leq}  AC_2
\left(1+\frac{1}{\varphi(j)}\right) \omega(\mu(\delta)).
\end{alignat*}
Under condition (\ref{condsimple}) function $f_\omega:=h_j$ indeed is a counter
example.
Thus, for the remaining proof one can assume that for all $j\in\mathbb{N}$,
$j\geq n_0$:
\begin{equation}
 \lim_{n\to\infty} \frac{E_n(h_j)}{\omega(\varphi(n))} = 0.\label{null}
\end{equation}

The arguments of Dickmeis, Nessel and van
Wickern have to be adjusted to missing sub-additivity in the next part of the
proof. It has to be shown that for each fixed $m\in\mathbb{N}$ a
finite sum inherits limit (\ref{null}). Let $(a_l)_{l=1}^m\subset\mathbb{R}$ and
$j_1,\dots,j_m \geq n_0$ different indices. To prove
\begin{equation}
\lim_{n\to\infty} \frac{E_n\left(\sum_{l=1}^m a_l h_{j_l}\right)
}{\omega(\varphi(n))} = 0,\label{eq2}
\end{equation}
one can apply (\ref{mono}), (\ref{semisub}), and (\ref{semilin}) for
$n\geq 2m$:
\begin{alignat}{1}
0 &\leq \frac{E_n\left(\sum_{l=1}^m a_l h_{j_l}\right)
}{\omega(\varphi(n))} \stackrel{\text{(\ref{mono})}}{\leq} \frac{E_{m\cdot
\lfloor n/m \rfloor}\left(\sum_{l=1}^m a_l h_{j_l}\right) }{\omega(\varphi(n))}
 \stackrel{\text{(\ref{semisub})}}{\leq} \frac{\sum_{l=1}^m E_{\lfloor n/m
\rfloor}\left(a_l h_{j_l}\right) }{\omega(\varphi(n))}\nonumber\\
 &\stackrel{\text{(\ref{semilin})}}{=} \sum_{l=1}^m |a_l| \frac{E_{\lfloor
 n/m \rfloor}\left( h_{j_l}\right)
 }{\omega(\varphi(n))}.\label{zwischen}
\end{alignat}
Since $\varphi(x)$ is decreasing and $\omega(\delta)$ is increasing,
$\omega(\varphi(x))$ is decreasing. Thus,
(\ref{faktor}) for $\lambda:=(2m)^{-1}$ and $n>\max\{2m, X_0(\lambda)\}$ implies
\begin{alignat*}{1}
\omega\left(\varphi\left(\lfloor n/m
\rfloor\right)\right)
&\leq
\omega\left(\varphi\left(\frac{n-m}{m}\right)\right)
 \leq
 \omega\left(\varphi\left(\frac{n-n/2}{m}\right)\right)
 \stackrel{\text{(\ref{faktor})}}{\leq} \omega\left(C_{\frac{1}{2m}}\varphi( n
 )\right)\\
&\stackrel{\text{(\ref{lomega})}}{\leq} \lceil C_{\frac{1}{2m}}\rceil
\omega\left(\varphi( n )\right).
\end{alignat*}
With this inequality, estimate (\ref{zwischen}) becomes
$$
0 \leq \frac{E_n\left(\sum_{l=1}^m a_l h_{j_l}\right)
}{\omega(\varphi(n))}
 \leq \lceil C_{\frac{1}{2m}} \rceil \sum_{l=1}^m |a_l| \frac{ E_{\lfloor n/m
\rfloor}\left(
 h_{j_l}\right)
 }{\omega\left(\varphi\left(\lfloor n/m
\rfloor\right)\right)}.
$$
According to (\ref{null}), this gives (\ref{eq2}).

Now one can select a sequence $(n_k)_{k=1}^\infty\subset\mathbb{N}$, $n_0 \leq n_k<n_{k+1}$ for all $k\in\mathbb{N}$, to construct a suitable
counterexample
\begin{equation}
f_\omega := \sum_{k=1}^\infty \omega(\varphi(n_k)) h_{n_k}.\label{deffw}
\end{equation} 

Let $n_1:=n_0$. If $n_1,\dots,n_k$ have already be chosen then select
$n_{k+1}$ with
$n_{k+1}\geq 2k$ and
$n_{k+1}> n_k$
large enough to fulfill following conditions:
\begin{alignat}{3}
&\omega(\varphi(n_{k+1})) \leq \frac{1}{2}
\omega(\varphi(n_k)) &\quad&\text{$\left(\lim_{x\to\infty}
\omega(\varphi(x))=0\right)$}\label{eq3}\\
&D_{2\cdot n_k} \omega(\varphi(n_{k+1})) \leq \frac{
\omega(\varphi(n_{k}))}{k} &&\text{$\left(\lim_{x\to\infty}
\omega(\varphi(x))=0\right)$}\label{eq5}\\
&\sum_{j=1}^k \frac{\omega(\varphi(n_j))}{\varphi(n_j)} \leq
\frac{\omega(\varphi(n_{k+1}))}{\varphi(n_{k+1})} &&\text{(cf.
(\ref{odd}))}\label{eq4}\\
&\frac{E_{n_{k+1}}\left(\sum_{l=1}^k \omega(\varphi(n_l)) h_{n_l}\right)
}{\omega(\varphi(n_{k+1}))} \leq \frac{1}{k+1}
 &&\text{(cf. (\ref{eq2}))}.\label{eq6}
\end{alignat}
Only condition
(\ref{eq5}) is adjusted to missing sub-additivity.
The next part of the proof does not consider properties of $E_n$, see \cite{DiNeWi2}.

Function $f_\omega$ in (\ref{deffw}) is well-defined:
For $j\geq k$, iterative application of (\ref{eq3}) leads to
$$ \omega(\varphi(n_{j})) \leq 2^{-1}  \omega(\varphi(n_{j-1})) \leq\cdots\leq 
2^{k-j} \omega(\varphi(n_{k})). $$
This implies
\begin{equation}
\sum_{j=k}^\infty \omega(\varphi(n_j)) 
\leq \sum_{j=k}^\infty 2^{k-j} \omega(\varphi(n_k)) 
= 2
\omega(\varphi(n_k)).\label{restsum}
\end{equation}
With this estimate (and because of $\lim_{k\to\infty}\omega(\varphi(n_k))=0$),
it is easy to see that $(g_m)_{m=1}^\infty$, $g_m:=\sum_{j=1}^m
\omega(\varphi(n_i)) h_{n_i}$, is a Cauchy sequence that converges to $f_\omega$ in Banach space $X$: For a given $\varepsilon>0$, there
exists a number $N_0(\varepsilon)>n_0$ such that
$\omega(\varphi(n_k))<\varepsilon/(2C_1)$ for all $k>N_0$. Then, due to
(\ref{ZUBP1}) and 
(\ref{restsum}), for all $k >i >N_0$:
$$ \| g_k-g_i \|_X \leq \sum_{j=i+1}^k\! \omega(\varphi(n_j)) \| h_{n_j}\|_X 
\leq  C_1 \sum_{j=i+1}^\infty\! \omega(\varphi(n_j))
\leq
2C_1 \omega(\varphi(n_{i+1})) < \varepsilon.$$ 
Thus, the Banach condition is fulfilled and counterexample $f_\omega\in X$ is
well defined.

Smoothness condition (\ref{ZZUBP3}) is proved in two cases. The first
case covers numbers $\delta>0$ for which 
$\mu(\delta)\leq \varphi(n_1)$.
Since $\lim_{x\to\infty} \varphi(x)=0$, there exists $k\in\mathbb{N}$ such that in this case  
\begin{equation}
\varphi(n_{k+1}) \leq \mu(\delta) \leq \varphi(n_k).\label{neuIntervall}
\end{equation}
Using this index $k$ in connection with the two bounds in (\ref{ZZUBP3}), one
obtains for sub-linear functional $S_\delta$
\begin{eqnarray}
\lefteqn{S_\delta(f_\omega) \leq \sum_{j=1}^k  \omega(\varphi(n_j))
S_\delta(h_{n_j}) +  \sum_{j=k+1}^\infty  \omega(\varphi(n_j))
S_\delta(h_{n_j})}\label{seso}\\
&\stackrel{\text{(\ref{ZZUBP3})}}{\leq}&
C_2 \left[ \left(\sum_{j=1}^{k-1}  \omega(\varphi(n_j))
\frac{\mu(\delta)}{\varphi(n_j)}\right) +  \omega(\varphi(n_k))
\frac{\mu(\delta)}{\varphi(n_k)} + \sum_{j=k+1}^\infty 
\omega(\varphi(n_j))\right]\nonumber\\
&\!\!\!\!\stackrel{\text{(\ref{eq4}), (\ref{restsum})}}{\leq}\!\!\!\!&
2 C_2 \mu(\delta) \frac{\omega(\varphi(n_k))}{\varphi(n_k)} +  2C_2
\omega(\varphi(n_{k+1}))\nonumber\\
&\leq& 2 C_2 \mu(\delta) \frac{\omega(\varphi(n_k))}{\varphi(n_k)} + 2C_2
\omega(\mu(\delta)).\label{sda}
\end{eqnarray}
The last estimate holds true because $\varphi(n_{k+1}) \leq \mu(\delta)$ in
(\ref{neuIntervall}).
The first expression in (\ref{sda}) can be estimated by $4C_2
\omega(\mu(\delta))$ with (\ref{neuIntervall}): Because $\mu(\delta)\leq
\varphi(n_k)$, one can apply (\ref{la2c}) to obtain
$$
\frac{\omega(\varphi(n_k))}{\varphi(n_k)} \leq 2 \frac{\omega(\mu(\delta))}{\mu(\delta)}. $$
Thus, $S_\delta(f_\omega) \leq  6C_2 \omega(\mu(\delta))$.

The second case is $\mu(\delta) > \varphi(n_1)$. In this situation, let
$k:=0$. Then only the second sum in (\ref{seso}) has to be considered:
$S_\delta(f_\omega) \leq  2C_2 \omega(\varphi(n_1)) \leq  2C_2
\omega(\mu(\delta))$.

The little-o condition remains to be proven without sub-additivity. 
From (\ref{semisub}) one obtains $E_{2n}(f) = E_{2n}(f+g-g) \leq
E_{n}(f+g)+E_{n}(-g)$, i.e.,
\begin{equation}
  E_{n}(f+g) \geq E_{2n}(f) - E_{n}(-g) \stackrel{\text{(\ref{semilin})}}{=}
E_{2n}(f) - E_{n}(g).\label{abres}
\end{equation}
The estimate can be used to show the desired lower bound based on
resonance condition (\ref{ZUBP3})
with the gliding hump method. Functional $E_{n_k}$ is in resonance with summand
$\omega(\varphi(n_k)) h_{n_k}$. This part of the sum forms the hump.
\begin{alignat*}{1}
\lefteqn{E_{n_k}(f_\omega) =
E_{n_k}\left(\omega(\varphi(n_k)) h_{n_k} + \sum_{j=1}^{k-1}\omega(\varphi(n_j))
h_{n_j} +  \sum_{j=k+1}^{\infty}\omega(\varphi(n_j))
h_{n_j}\right)}\\
&\stackrel{\text{(\ref{abres})}}{\geq} E_{2\cdot
n_k}\left(\omega(\varphi(n_k)) h_{n_k} + \sum_{j=k+1}^{\infty}\omega(\varphi(n_j)) h_{n_j}\right) -
E_{n_k}\left(\sum_{j=1}^{k-1}\omega(\varphi(n_j)) h_{n_j}\right)\\
&\stackrel{\text{(\ref{abres}), (\ref{semilin})}}{\geq} \omega(\varphi(n_k))
E_{4\cdot n_k}\left( h_{n_k}\right) - E_{2\cdot n_k}\left(
\sum_{j=k+1}^{\infty}\omega(\varphi(n_j)) h_{n_j}\right)\\
&\qquad -
E_{n_k}\left(\sum_{j=1}^{k-1}\omega(\varphi(n_j)) h_{n_j}\right)\\
&\stackrel{\text{(\ref{ZUBP3}), (\ref{bounded}), (\ref{eq6})}}{\geq}
\omega(\varphi(n_k)) c_3 -D_{2n_k}
\left(\sum_{j=k+1}^{\infty}\omega(\varphi(n_j))\left\| h_{n_j} \right\|_X\right)
- \frac{\omega(\varphi(n_k))}{k}\\
&\stackrel{\text{(\ref{ZUBP1}), (\ref{restsum})}}{\geq} \omega(\varphi(n_k))
c_3 -D_{2n_k} C_1 2\omega(\varphi(n_{k+1})) -
\frac{\omega(\varphi(n_k))}{k}\nonumber\\
&\stackrel{\text{(\ref{eq5})}}{\geq} \left(c_3
- \frac{2C_1+1}{k}\right) \omega(\varphi(n_k)).
\end{alignat*}
Thus $E_{n}(f_\omega) \neq o(\omega(\varphi(n)))$.
\end{proof}

\section{Sharpness}\label{secsharp}

Free knot spline function approximations by Heaviside, cut and ReLU functions
are first examples for application of Theorem \ref{ZUBP}.

Let $S_n^r$ be the space of functions $f$ for which $n+1$ intervals $]x_k,
x_{k+1}[$, $0=x_0 < x_1 < \dots < x_{n+1}=1$, exist such that $f$ equals 
(potentially different) polynomials $p$ of degree less than $r$ on each of these
intervals, i.e.\ $p\in \Por$.
No additional smoothness conditions are required at knots.

\begin{corollary}[Free Knot Spline Approximation]\label{thsharpsplines}
For $r, \tilde{r}\in\mathbb{N}$, $1\leq p\leq\infty$, and for each abstract
modulus of smoothness $\omega$ satisfying (\ref{odd}),
there exists a counterexample
$f_{\omega}\in X^p[0,1]$ such that 
\begin{alignat*}{1}
 \omega_r(f_{\omega}, \delta)_p &= {O}\left(\omega(\delta^r)\right),\\
  E(S_n^{\tilde{r}}, f_{\omega})_p &:=\inf\{\|f_{\omega}-g\|_{X^p[0, 1]} : g\in
  S_n^{\tilde{r}}\} \neq o\left(\omega\left(\frac{1}{n^r}\right)\right).
\end{alignat*}
\end{corollary}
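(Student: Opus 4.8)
The strategy is to derive Corollary \ref{thsharpsplines} from the Adapted Uniform Boundedness Principle (Theorem \ref{ZUBP}) by making the right identifications. First I would set $X:=X^p[0,1]$, which is a Banach space, and define the error functionals $E_n(f):=E(S_{\kappa n}^{\tilde r},f)_p$ for a suitable fixed integer $\kappa=\kappa(\tilde r)\ge 1$ — the dilation factor is needed because joining two free-knot splines with $N$ and $M$ breakpoints yields a free-knot spline with at most $N+M+1$ breakpoints (and pieces add likewise), so that condition (\ref{semisub}) holds after a harmless rescaling of the index; alternatively one can absorb this into the choice of test elements. The functionals $E_n$ satisfy (\ref{semilin}) (absolute homogeneity, as for any best-approximation functional over a set closed under scalar multiplication), (\ref{bounded}) with $D_n=1$ (take the zero spline), and (\ref{mono}) (more knots can only help). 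For the smoothness side I would take $S_\delta(f):=\omega_r(f,\delta)_p$, which lies in $X^\sim$: it is sub-additive, absolutely homogeneous, and bounded by $2^r\|f\|_{X^p[0,1]}$. I would set $\mu(\delta):=\delta^r$ and $\varphi(n):=1/n^r$, so that $\omega(\mu(\delta))=\omega(\delta^r)$ and $\omega(\varphi(n))=\omega(1/n^r)$ match the two conclusions of the corollary, and (\ref{faktor}) holds trivially for this power function with $C_\lambda=\lambda^r$, $X_0=\lambda^{-1}$.

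The heart of the argument is constructing the test elements $h_n\in X^p[0,1]$ satisfying (\ref{ZUBP1})–(\ref{ZUBP3}). The natural candidate, mirroring the classical lower bounds for spline approximation in \cite[Section 12.8]{DeVore93}, is a highly oscillating piecewise-linear (or, when $r=1$, piecewise-constant) "sawtooth" type function of amplitude $O(1)$ with roughly $cn$ oscillations on $[0,1]$, so that $\|h_n\|_{X^p[0,1]}\le C_1$ and $E(S_{4\kappa n}^{\tilde r},h_n)_p\ge c_3>0$ for a suitable absolute constant — a function oscillating on the order of $n$ scales simply cannot be matched by a spline with $O(n)$ free knots (counting sign changes / pieces forces a bounded-below error). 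The smoothness bound (\ref{ZZUBP3}), namely $\omega_r(h_n,\delta)_p\le C_2\min\{1,\delta^r/\varphi(n)\}=C_2\min\{1,n^r\delta^r\}$, requires that $h_n$ be scaled so its "natural length scale" is $1/n$: for $\delta\lesssim 1/n$ one uses that an $r$th difference of a function with bounded $r$th derivative of size $\sim n^r$ on each smooth piece is $\lesssim (n\delta)^r$ via (\ref{gegenAbl}), taking care at the finitely many breakpoints (their contribution to $\|\Delta_h^r h_n\|_p^p$ is $O(n\cdot h)$ in measure, which is also $O((n\delta)^r)$ when $r=1$ and can be handled by a separate estimate for $r\ge 2$); for $\delta\gtrsim 1/n$ the trivial bound $\omega_r\le 2^r\|h_n\|_\infty\le C_2$ applies.

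With these pieces in place, Theorem \ref{ZUBP} immediately produces $f_\omega\in X^p[0,1]$ with $S_\delta(f_\omega)=\omega_r(f_\omega,\delta)_p=O(\omega(\delta^r))$ and $E_n(f_\omega)=E(S_{\kappa n}^{\tilde r},f_\omega)_p\neq o(\omega(1/n^r))$; replacing $n$ by $n/\kappa$ (using (\ref{mono}) and reindexing, exactly as in (\ref{verschiebung})) and observing that $\omega(1/(n/\kappa)^r)=\omega(\kappa^r/n^r)\le \kappa^r\omega(1/n^r)$ by (\ref{lomega}) gives $E(S_n^{\tilde r},f_\omega)_p\neq o(\omega(1/n^r))$, which is the claim. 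I expect the main obstacle to be the verification of (\ref{ZZUBP3}) for general $r$ and $\tilde r$ uniformly in $n$: one must choose a single family $h_n$ whose $r$th modulus is simultaneously small at small scales (forcing smoothness of the smooth pieces and control of the breakpoint defects) yet which resists approximation by $S_{4\kappa n}^{\tilde r}$ no matter how $\tilde r$ relates to $r$. The cleanest route is probably to take $h_n$ piecewise linear with $O(n)$ equispaced knots and alternating slopes $\pm n$ (a triangular wave of height $O(1)$), for which the breakpoint analysis for $\omega_r$ is standard and the lower bound $E(S_{4\kappa n}^{\tilde r},h_n)_p\gtrsim 1$ follows from a pigeonhole on intervals free of spline breakpoints where $h_n$ still completes a full oscillation.
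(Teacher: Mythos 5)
Your framework is exactly the paper's: the same identifications $X=X^p[0,1]$, $E_n(f)=E(S_n^{\tilde r},f)_p$, $S_\delta(f)=\omega_r(f,\delta)_p$, $\mu(\delta)=\delta^r$, $\varphi(x)=1/x^r$, and a resonance argument based on counting sign changes of a free knot spline. (The dilation factor $\kappa$ is unnecessary: interior knots simply add, so $g_1+\dots+g_m\in S_{mn}^{\tilde r}$ for $g_k\in S_n^{\tilde r}$ and (\ref{semisub}) holds verbatim, as the paper notes.) The resonance idea is also essentially the paper's Lemma \ref{laresonanz}, though your pigeonhole must count not only breakpoint-free oscillation intervals but also the up to $\tilde r-1$ sign changes of each polynomial piece; the paper handles this by taking $N=(4n+1)\tilde r$ oscillations so that $4n+(4n+1)(\tilde r-1)<N$.

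The genuine gap is your choice of test elements. A piecewise-linear triangular wave $h_n$ with slopes $\pm Cn$ does \emph{not} satisfy the smoothness condition (\ref{ZZUBP3}) for $r\geq 2$ unless $p=1$. Near a breakpoint where the slope jumps by $\approx n$, the second difference satisfies $|\Delta_h^2 h_n(x)|\approx nh$ on a set of $x$ of measure $\approx nh$, so
\[
\omega_2(h_n,\delta)_\infty \asymp n\delta,\qquad
\omega_2(h_n,\delta)_p \asymp (n\delta)^{1+1/p},
\]
for small $\delta$, whereas (\ref{ZZUBP3}) demands $O\bigl((n\delta)^2\bigr)$. Since $n\delta\leq 1$ and $1+1/p<2$ for $p>1$, the required bound fails for every $1<p\leq\infty$ (and analogously for all $r\geq 2$): the breakpoint contribution is not a removable technicality but the dominant term, reflecting the fact that $\omega_2(f,\delta)=O(\delta^2)$ forces $f'$ to lie in the Zygmund class, which a sawtooth derivative does not. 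Your hedge that this ``can be handled by a separate estimate for $r\geq 2$'' is therefore where the argument breaks down. The fix is to take a smooth resonance element; the paper uses $h_n(x)=\sin(2N\cdot 2\pi x)$ with $N=(4n+1)\tilde r$, for which (\ref{gegenAbl}) gives $\omega_r(h_n,\delta)_p\leq\delta^r\|h_n^{(r)}\|_\infty=O\bigl((n\delta)^r\bigr)$ for every $r$ simultaneously, while the lower bound $\|h_n-g\|_{X^p[0,1]}\geq c>0$ for all $g\in S_{4n}^{\tilde r}$ follows from the sign-change count and Lemma \ref{laresonanz}. For $r=1$ your triangular wave would work, but the corollary is claimed for all $r\in\mathbb{N}$.
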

Note that $r$ and $\tilde{r}$ can be chosen independently. This corresponds with
Marchaud inequality for moduli of smoothness.

The following lemma helps in the proof of this and the next corollary. It is
used to show the resonance condition of Theorem \ref{ZUBP}.
\begin{lemma}\label{laresonanz}
Let $g:[0, 1] \to\mathbb{R}$, and $0=x_0 < x_1 < \dots < x_{N+1}=1$. 
Assume that for each interval $I_k:=(x_k, x_{k+1})$, $0\leq k\leq N$, either
$g(x)\geq 0$ for all $x\in I_k$ or $g(x)\leq 0$ for all $x\in I_k$ holds. Then
$g$ can change its sign only at points $x_k$.
Let $h(x):=\sin(2N\cdot 2\pi\cdot x)$. Then there exists a constant $c>0$ that
is independent of $g$ and $N$ such that 
$$  \| h - g\|_{X^p[0, 1]} \geq c > 0.
$$ 
\end{lemma}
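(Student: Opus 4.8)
The plan is to exploit the mismatch between the number of oscillations of $h$ and the number of sign changes available to $g$. The function $h(x)=\sin(4\pi N x)$ has period $1/(2N)$, so it completes exactly $2N$ full periods $P_k=\bigl(\tfrac{k}{2N},\tfrac{k+1}{2N}\bigr)$, $0\le k\le 2N-1$, on $[0,1]$, and on each $P_k$ there is one half-period (length $1/(4N)$) on which $h\ge 0$ and one on which $h\le 0$. On the other hand, the hypothesis says $g$ keeps one sign on each of the $N+1$ intervals $I_m=(x_m,x_{m+1})$, so $g$ changes sign only at the $N$ interior knots $x_1,\dots,x_N$ (this gives the first assertion directly). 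Since the $P_k$ are pairwise disjoint, each interior knot lies strictly inside at most one of them, so at least $2N-N=N$ of the $P_k$ contain no knot in their interior. I would call such a $P_k$ \emph{good}: being a connected subset of $(0,1)\setminus\{x_1,\dots,x_N\}=\bigcup_m I_m$, a good $P_k$ lies inside a single $I_m$, hence $g$ has one sign throughout $P_k$.

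Next I would select, inside each good $P_k$, a half-period $H_k$ of $h$ on which $|h-g|\ge |h|$ pointwise. If $g\le 0$ on $P_k$, take $H_k$ to be the half-period with $h\ge 0$: then $h-g\ge h\ge 0$. If $g\ge 0$ on $P_k$, take $H_k$ to be the half-period with $h\le 0$: then $g-h\ge -h\ge 0$. In both cases $|h-g|\ge |h|=|\sin(4\pi N x)|$ on $H_k$, and the $H_k\subseteq P_k$ are pairwise disjoint over distinct good $k$.

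For $1\le p<\infty$ a change of variables gives, on any half-period of length $1/(4N)$, the value $\int_{H_k}|\sin(4\pi N x)|^p\,dx=\frac{1}{4\pi N}\int_0^{\pi}|\sin u|^p\,du=:\gamma_p/N$ with $\gamma_p>0$ depending only on $p$. Summing the pointwise bound over the at least $N$ good periods, $\int_0^1|h-g|^p\,dx\ge N\cdot(\gamma_p/N)=\gamma_p$, hence $\|h-g\|_{X^p[0,1]}\ge\gamma_p^{1/p}=:c>0$ (if $g$ is not measurable the estimate is vacuous). For $p=\infty$ a single good period already suffices: $\|h-g\|_{X^\infty[0,1]}\ge\sup_{x\in H_k}|h(x)-g(x)|\ge\sup_{x\in H_k}|h(x)|=1$, so $c=1$ works.

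The only genuine subtlety — and the step I would treat most carefully — is the combination of the pigeonhole count with the exact cancellation $N\cdot(\gamma_p/N)$: one must verify that the guaranteed number of good periods grows like $N$ at precisely the rate that compensates the $O(1/N)$ mass of one half-period, so that the resulting $c$ is truly independent of both $N$ and $g$. The sign bookkeeping that picks the correct half-period of $h$ inside each good period is routine but must be carried out in both cases $g\ge 0$ and $g\le 0$ on $P_k$.
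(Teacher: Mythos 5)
Your proof is correct and follows essentially the same route as the paper: the same partition of $[0,1]$ into the $2N$ period intervals, the same pigeonhole count showing at least $N$ of them are free of sign changes of $g$, the same choice of a half-period on which $|h-g|\ge|h|$, and the same resulting constant $c=\bigl[\frac{1}{4\pi}\int_0^\pi\sin^p u\,du\bigr]^{1/p}$ for $p<\infty$ and $c=1$ for $p=\infty$. The additional care you take with knots landing on interval endpoints and with the cancellation $N\cdot(\gamma_p/N)$ is sound and only makes explicit what the paper leaves implicit.
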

The prerequisites on $g$ are fulfilled if $g$ is continuous with at most $N$
zeroes.
\begin{proof}
We discuss $2N$ intervals $A_k:=(k (2N)^{-1}, (k+1) (2N)^{-1})$, $0\leq k<2N$.
Function $g$ can change its sign at most in $N$ of these intervals.
Let $J\subset\{0,1,\dots,2N-1\}$ the set of indices $k$ of the at least
$N$ intervals $A_k$ on which $g$ is non-negative or non-positive.
On each of these intervals, $h$ maps to both its maximum 1 and
its minimum $-1$. Thus $\|h-g\|_{B[0,1]}\geq 1$. This shows the Lemma for
$p=\infty$. 
Functions $h$ and $g$ have different sign on $(a, b)$ where
$(a, b)=(k (2N)^{-1}, (k+1/2) (2N)^{-1})$ or $(a, b)=((k+1/2) (2N)^{-1},
(k+1) (2N)^{-1})$, $k\in J$.
For all $x\in(a, b)$ we see that
$|h(x)-g(x)|\geq |h(x)| = \sin(2N\cdot 2\pi\cdot (x-a))$.
Thus, for $1\leq p<\infty$,
\begin{alignat*}{1}
\lefteqn{
\| h-g\|_{L^p[0,1]} \geq \left[\sum_{k\in J}\int_{A_k}
|h-g|^p\right]^{\frac{1}{p}}}\\
&\geq \left[ N\int_{0}^{\frac{1}{4N}}  \sin^p\left(2N\cdot 2\pi\cdot
x\right)\, dx\right]^{\frac{1}{p}} 
\geq \left[ \frac{N}{N 4\pi}\int_{0}^{\pi} 
\sin^p(u)\, du\right]^{\frac{1}{p}} =: c > 0.
\end{alignat*}
\end{proof}

\begin{proof} (of Corollary \ref{thsharpsplines})
Theorem \ref{ZUBP} can be applied with
following parameters.
Let Banach-space $X=X^p[0, 1]$.  
$$ E_n(f):= E(S_n^{\tilde{r}}, f)_p, \quad S_\delta(f) := \omega_r(f, \delta)_p.
$$
Whereas $S_\delta$ is a sub-linear, bounded functional, errors of best
approximation $E_n$ fulfill conditions (\ref{semisub}), (\ref{semilin}),
(\ref{bounded}), and (\ref{mono}), cf. (\ref{nonsublin}),
with $D_n=1$.
Let $\mu(\delta):=\delta^r$ and $\varphi(x)=1/x^r$ such that condition
(\ref{faktor}) holds: $\varphi(\lambda x)=\varphi(x)/\lambda^r$.
Let $N=N(n):= (4n+1)\tilde{r}$. Resonance elements
$$
h_n(x):=\sin(2N\cdot 2\pi \cdot x)=\sin((16n+4)\tilde{r} \pi \cdot x)
$$
obviously satisfy condition (\ref{ZUBP1}): $\| h_n(x)\|_{X^p[0, 1]}\leq 1=:C_1$.
One obtains (\ref{ZZUBP3}) because of
\begin{alignat*}{1}
S_\delta(h_n) &= \omega_r(h_n, \delta)_p \leq 2^r \| h_n\|_{X^p[0, 1]} \leq 2^r
\text{ and (see (\ref{gegenAbl}))}\\
S_\delta(h_n) &= \omega_r(h_n, \delta)_p \leq \delta^r \| h_n^{(r)}\|_{X^p[0,
1]} \leq ((16n+4)\tilde{r} \pi)^r \delta^r\\ 
&\leq (\tilde{r} \pi)^r \delta^r
(20 n)^r = (\tilde{r} 20\pi)^r
\frac{\mu(\delta)}{\varphi(n)}.
\end{alignat*}
Let $g\in S_{4n}^{\tilde{r}}$, then $g$ is composed from at most $4n+1$
polynomials on $4n+1$ intervals. On each of these intervals, $g\equiv 0$ or $g$
at most has $\tilde{r}-1$ zeroes. Thus $g$ can change sign at $4n$ interval
borders and at zeroes of polynomials, and $g$ fulfills the prerequisites of
Lemma
\ref{laresonanz}
with $N=(4n+1)\cdot \tilde{r} >  4n+(4n+1)\cdot (\tilde{r}-1)$.
Due to the lemma, 
$\| h_n-g\|_{X^p[0, 1]}\geq c>0$ independent of $n$ and $g$.
Since this holds
true for all $g$, (\ref{ZUBP3}) is shown for $c_3=c$.
All preliminaries of Theorem \ref{ZUBP} are fulfilled such that counterexamples
exist as stated.
\end{proof} 

Corollary \ref{thsharpsplines} can be applied with respect to 
all activation functions $\sigma$ belonging to the class of
splines with fixed polynomial degree less than $r$ and a finite number of
knots $k$ because 
$\Phi_{n,\sigma}\subset S_{nk}^{r}$. 

Since $\Phi_{n,\sigma_h} \subset S_n^1$, Corollary \ref{thsharpsplines} directly
shows sharpness of (\ref{d1}) and (\ref{m2b}) for the Heaviside activation
function if one chooses $r=\tilde{r}=1$. 
Sharpness of (\ref{m2}) for cut 
and ReLU function follows for $r=\tilde{r}=2$ because $\Phi_{n,\sigma_c}\subset
S_{2n}^2$, $\Phi_{n,\sigma_r}\subset
S_{n}^2$.
However, the case $\omega(\delta)=\delta$ of maximum non-saturated convergence
order is excluded by condition (\ref{odd}). 
We discuss this case for
$r=\tilde{r}$. Then a simple counterexample is
$f_\omega(x):=x^r$.
For each sequence of coefficients $d_0,\dots,d_{r-1}\in\mathbb{R}$ we can apply
the fundamental theorem of algebra to find complex zeroes
$a_0,\dots,a_{r-1}\in\mathbb{C}$ such that
$$ \left|x^r - \sum_{k=0}^{r-1} d_k x^k\right| = \prod_{k=0}^{r-1} |x-a_k|. 
$$
There exists an interval $I:=(j (r+1)^{-1}, (j+1) (r+1)^{-1})\subset [0, 1]$
such that real parts of complex numbers $a_k$ are not in $I$ for all $0\leq k<r$.
Let $I_0:=((j+1/4) (r+1)^{-1}, (j+3/4) (r+1)^{-1})\subset I$. Then for all $x\in
I_0$ $$ \prod_{k=0}^{r-1} |x-a_k| \geq \left[\frac{1}{4(r+1)}\right]^r =: c_\infty >
0.
$$
This lower bound is independent of coefficients $d_k$ such that
$$ \inf \{ \|x^r - q(x)\|_{B[0, 1]} : q\in \Por\} \geq c_\infty > 0. $$
We also see that
$$ \int_0^1 \left|x^r - \sum_{k=0}^{r-1} d_k x^k\right|^p\, dx
\geq \int_{I_0} \frac{1}{[4(r+1)]^{pr}}\, dx
= \frac{\frac{1}{2(r+1)}}{[4(r+1)]^{pr}} =: c_p^p > 0,
$$
$$ \inf \{ \|x^r - q(x)\|_{L^p[0, 1]} : q\in \Por\} \geq c_p >
0. $$
Each function $g\in S_n^r$
is a polynomial of degree less than $r$ on at least $n$ intervals $(j
(2n)^{-1}, (j+1) (2n)^{-1} )$, $j\in J\subset\{0,1,\dots,2n-1\}$. For $j\in J$:
$$ \inf_{q\in \Por} \|x^r - q(x)\|_{B\left[\frac{j}{2n}, \frac{j+1}{2n}\right]}
=
   \inf_{q\in \Por} \left\|\left(\frac{x}{2n}+\frac{j}{2n}\right)^r -
   q(x)\right\|_{B[0, 1]} \geq \frac{c_\infty}{(2n)^r}.
$$
Thus, $E(S_n^r,x^r) \neq o\left(\frac{1}{n^r}\right)$. In case of $L^p$-spaces,
we similarly obtain with substitution $u=2n x-j$
\begin{alignat*}{1}
 \inf_{q\in \Por} &\int_{\frac{j}{2n}}^{\frac{j+1}{2n}} |x^r - q(x)|^p\, dx =
   \inf_{q\in \Por} \frac{1}{2n}\int_{0}^{1}
   \left|\left(\frac{u}{2n}+\frac{j}{2n}\right)^r - q(u)\right|^p\, du\\ 
   & =
   \inf_{q\in \Por} \frac{1}{2n}\int_{0}^{1}
   \frac{1}{(2n)^{pr}}\left|u^r - q(u)\right|^p\, du
   \geq
   \frac{c_p^p}{2n\cdot (2n)^{pr}}.
\end{alignat*}
Sharpness is demonstrated by combining lower estimates of all $n$ subintervals:
$$ E(S_n^r,x^r)_p^p \geq n  \frac{c_p^p}{(2n)^{pr+1}}, \quad E(S_n^r,x^r)_p \geq
\frac{c_p}{2^{r+\frac{1}{p}} n^{r}}\neq o\left(\frac{1}{n^r}\right).
$$

Although our counterexample is arbitrarily often differentiable, the
convergence order is limited to $n^{-r}$. Reason is the 
definition of the activation function by piecewise polynomials. There is no such limitation for
activation functions that are arbitrarily often differentiable 
on an interval without being a polynomial, see Theorem
\ref{directsigmoid2}.
Thus, neural networks based on smooth non-polynomial activation functions might
approximate better if smooth functions have to be learned.

Theorem 3 in
\cite{Chen93} states for the Heaviside function that for each $n\in \mathbb{N}$
a function $f_n\in C[0, 1]$ exits such that the error of best
uniform approximation exactly equals $\omega_1\left(f_n,
\frac{1}{2(n+1)}\right)$.
This is used to show optimality of the constant. Functions $f_n$
might be different for different $n$. One does not get the
condensed sharpness result of Corollary
\ref{thsharpsplines}.

Another relevant example of a spline of fixed polynomial degree with a finite
number of knots is the square
non-linearity (SQNL) activation function $\sigma(x):= \sgn(x)$ for $|x|>2$ and $\sigma(x):= x-\sgn(x)\cdot x^2/4$ for $|x|\leq 2$. Because
$\sigma$, restricted to each of the four sub-intervals of piecewise definition,
is a polynomial of degree two, we can choose $\tilde{r}=3$.

The proof of Corollary \ref{thsharpsplines} is based on Lemma \ref{laresonanz}. 
This argument can be also used to deal with
rational activation functions $\sigma(x)=q_1(x)/q_2(x)$ where 
$q_1,q_2\not\equiv 0$ are polynomials of degree at most $\rho$. Then non-zero
functions $g\in \Phi_{n,\sigma}$ do have at most $\rho n$ zeroes and $\rho n$ poles such
that there is no change of sign on $N+1$ intervals, $N=2\rho n$. Thus,
Corollary 1 can be extended to neural network approximation with rational
activation functions in a straight forward manner.

Whereas the direct estimate (\ref{m2})
for cut and ReLU functions is based on linear best approximation, 
the counterexamples hold for non-linear best approximation. 
Thus, error bounds in terms of moduli of smoothness may not be able to express
the advantages of non-linear free knot spline approximation in contrast to fixed
knot spline approximation (cf.\ \cite{Sanguineti99}). For an error measured in
an $L^p$ norm with an order like $n^{-\alpha}$, smoothness only 
is required in $L^q$, $q:=1/(\alpha+1/p)$, see (\ref{equivalence}) and
\cite[p.~368]{DeVore93}.

\begin{corollary}[Inverse Tangent]\label{thsharp3}
Let $\sigma=\sigma_a$ be the sigmoid function based on the
inverse tangent function, $r\in\mathbb{N}$, and $1\leq p\leq\infty$.
For each abstract modulus of smoothness $\omega$ satisfying 
(\ref{odd}),
there exists a counterexample
$f_{\omega}\in X^p[0,1]$ such that 
$$ \omega_r(f_{\omega}, \delta)_p = {O}\left(\omega(\delta^r)\right)\text{ and
} E(\Phi_{n,\sigma_a}, f_{\omega})_p \neq
o\left(\omega\left(\frac{1}{n^r}\right)\right).
$$
\end{corollary}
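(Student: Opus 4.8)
The plan is to apply Theorem~\ref{ZUBP} with essentially the same data as in the proof of Corollary~\ref{thsharpsplines}, replacing only the verification of the resonance condition (\ref{ZUBP3}), since the approximation space is now $\Phi_{n,\sigma_a}$ rather than a space of free knot splines. Concretely, I would take $X=X^p[0,1]$, $E_n(f):=E(\Phi_{n,\sigma_a},f)_p$, $S_\delta(f):=\omega_r(f,\delta)_p$, $\mu(\delta):=\delta^r$, and $\varphi(x)=1/x^r$, so that (\ref{faktor}) holds exactly as before. The functionals $E_n$ satisfy (\ref{semisub})--(\ref{mono}) with $D_n=1$ by the general discussion in Section~\ref{secUBP} (using (\ref{nonsublin}) and the monotonicity $E(\Phi_{n,\sigma_a},f)_p\ge E(\Phi_{n+1,\sigma_a},f)_p$), and $S_\delta\in X^\sim$. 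As test elements I would again use the sinusoids $h_n(x):=\sin(2N\cdot 2\pi x)$ for a suitable $N=N(n)$ linear in $n$; then $\|h_n\|_{X^p[0,1]}\le 1=:C_1$ gives (\ref{ZUBP1}), and the two standard estimates $\omega_r(h_n,\delta)_p\le 2^r$ and $\omega_r(h_n,\delta)_p\le\delta^r\|h_n^{(r)}\|_{X^p[0,1]}\le (2\pi\cdot 2N)^r\delta^r$ give (\ref{ZZUBP3}) with $\mu(\delta)/\varphi(n)=\delta^r n^r$, once $N$ is chosen proportional to $n$.

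The only genuinely new ingredient is the resonance bound $E_{4n}(h_n)\ge c_3>0$, and the key to it is Lemma~\ref{laresonanz}: it suffices to show that every $g\in\Phi_{4n,\sigma_a}$ is continuous with at most $N-1$ (or fewer than $N$) zeroes on $[0,1]$, where $N$ is the frequency parameter of $h_n$; then $\|h_n-g\|_{X^p[0,1]}\ge c>0$ uniformly. So the central step is a zero-counting argument for functions of the form $g(x)=\sum_{k=1}^{4n}a_k\left(\tfrac12+\tfrac1\pi\arctan(b_kx+c_k)\right)$. Here I would exploit that $\arctan$ is real-analytic and, crucially, extends to a function holomorphic on $\mathbb{C}\setminus\{z:\ \mathrm{Im}\,z\ge 1\ \text{or}\ \mathrm{Im}\,z\le-1\}$ (branch points at $\pm i$); equivalently, one can study the number of real zeroes of $g$ directly. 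The cleanest route is probably to differentiate: $g'(x)=\sum_{k=1}^{4n}\tfrac{a_kb_k/\pi}{1+(b_kx+c_k)^2}$ is a rational function whose numerator has degree at most $2(4n-1)$ after clearing the $4n$ quadratic denominators, hence $g'$ has at most $8n-2$ real zeroes, so by Rolle's theorem $g$ has at most $8n-1$ zeroes. Therefore, choosing $N=N(n)$ to be, say, $(8n+1)$ (or any linear expression exceeding $8n-1$), every $g\in\Phi_{4n,\sigma_a}$ has strictly fewer than $N$ zeroes and Lemma~\ref{laresonanz} applies with that $N$, yielding (\ref{ZUBP3}) with $c_3=c$.

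The main obstacle I anticipate is making the zero count for $g\in\Phi_{4n,\sigma_a}$ airtight while keeping $N$ linear in $n$: one must be careful that clearing denominators cannot create spurious cancellations lowering the apparent degree in a way that matters (it can only help, since fewer zeroes is better), and that the terms with $b_k=0$ (constant summands) contribute nothing to $g'$ and hence do not affect the bound. A secondary point is bookkeeping: Lemma~\ref{laresonanz} is phrased for $h(x)=\sin(2N\cdot 2\pi x)$ and a partition into $N+1$ intervals with $g$ having at most $N$ zeroes, so I would align the constants by picking $N=N(n)$ just above $8n-1$ and then re-deriving the smoothness estimate (\ref{ZZUBP3}) with the matching frequency. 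Once the zero-counting lemma is in place, the remainder is a verbatim application of Theorem~\ref{ZUBP}, which delivers $f_\omega\in X^p[0,1]$ with $\omega_r(f_\omega,\delta)_p=O(\omega(\delta^r))$ and $E(\Phi_{n,\sigma_a},f_\omega)_p\neq o(\omega(1/n^r))$, as claimed. A brief remark at the end would note that the same argument covers any activation function that is real-analytic with a derivative expressible as a rational function of controlled degree (or more generally whose members of $\Phi_{n,\sigma}$ admit an $O(n)$ bound on the number of real zeroes).
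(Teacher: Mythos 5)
Your proposal is correct and follows essentially the same route as the paper: the identical choice of $X$, $E_n$, $S_\delta$, $\mu$, $\varphi$, sinusoidal resonance elements, and the key zero-counting step in which $g'$ is written as a rational function with numerator of degree at most $2(4n-1)$, giving at most $8n-1$ zeroes of $g$ by Rolle's theorem, so that Lemma~\ref{laresonanz} applies (the paper takes $N=8n$ rather than $8n+1$, an immaterial difference).
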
 
The corollary shows sharpness of the error bound in Theorem
\ref{directsigmoid2} applied to the arbitrarily often differentiable
function $\sigma_a$.

\begin{proof}
Similarly to the proof of Corollary \ref{thsharpsplines}, we
apply Theorem \ref{ZUBP} with parameters $X=X^p[0, 1]$, $E_n(f):=
E(\Phi_{n,\sigma_a}, f)_p$, $S_\delta(f) := \omega_r(f, \delta)_p$,
$\mu(\delta):=\delta^r$, $\varphi(x)=1/x^r$, and 
$$h_n(x) := \sin\left(16n\cdot 2\pi x\right) =  \sin\left(2N \cdot 2\pi x\right)
$$
with $N=N(n):=8n$,
such that condition (\ref{ZUBP1}) is obvious and (\ref{ZZUBP3}) can be shown 
by estimating the modulus in terms of the $r$th derivative of $h_n$
with (\ref{gegenAbl}).
Let $g\in \Phi_{4n,\sigma_a}$, 
$$g(x)=\sum_{k=1}^{4n} a_k
\left(\frac{1}{2}+\frac{1}{\pi}\arctan(b_kx+c_k)\right).$$ Then 
$$ g'(x)= \sum_{k=1}^{4n}
\frac{a_k b_k}{\pi} \frac{1}{1+(b_kx+c_k)^2} = \frac{s(x)}{q(x)}
$$
where $s(x)$ is a polynomial of degree $2(4n-1)$, and $q(x)$ is a polynomial of
degree $8n$. If $g$ is not constant then $g'$ at most has $8n-2$ zeroes and $f$
at most has $8n-1$ zeroes due to the mean value theorem (Rolle's theorem).
In both cases, the requirements of Lemma \ref{laresonanz} are fulfilled with
$N(n)=8n>8n-1$ such that
$\| h_n-g\|_{X^p[0, 1]}\geq c>0$ independent of $n$ and $g$.
Since $g$ can be chosen
arbitrarily, (\ref{ZUBP3}) is shown with $E_{4n} h_n \geq c>0$.
\end{proof}

Whereas lower estimates for sums of $n$ inverse tangent functions are
easily obtained by considering $O(n)$ zeroes of their derivatives, sums of $n$
logistic functions (or hyperbolic tangent functions) might have an
exponential number of zeroes.
To illustrate the problem in the context of
Theorem \ref{ZUBP}, let 
\begin{equation}
g(x):= \sum_{k=1}^{4n}
\frac{a_k}{1+e^{-c_k}(e^{-b_k})^x} \in \Phi_{4n,\sigma_l}.\label{eqtry}
\end{equation}
Using a common denominator, the numerator is a sum of type 
$$\sum_{k=1}^{m}
\alpha_k \kappa_k^x$$ 
for some $\kappa_k>0$ and $m<2^{4n}$. According to
\cite{Tossavainen07}, such a function has at most $m-1<16^n-1$ zeroes, or
it equals the zero function. Therefore, an interval $[k (16)^{-n},
(k+1) (16)^{-n}]$ exists on which $g$ does not change its sign.
By using a resonance sequence $h_n(x) := \sin\left(16^n\cdot 2\pi x\right)$,
one gets $E(\Phi_{4n,\sigma_l}, h_n) \geq 1$. But factor $16^n$ is by far too
large. One has to choose $\phi(x):=1/16^x$ and $\mu(\delta):=\delta$ to
obtain a ``counterexample'' $f_\omega$ with 
\begin{equation}
E(\Phi_{n,\sigma_l}, f_\omega)
\in O\left(\omega\left(\frac{1}{n}\right)\right)\text{ and }
E(\Phi_{n,\sigma_l}, f_\omega) \neq o\left(\omega\left(\frac{1}{16^n}\right)\right).\label{bad}
\end{equation}
The gap between rates
is obvious. 
The same difficulties do not only occur for the logistic function but also
for other activation functions based on $\exp(x)$ like the softmax
function $\sigma_m(x):=\log(\exp(x) + 1)$.
Similar to (\ref{trick}),
$$ \frac{\partial}{\partial c} \sigma_m(bx+c) = \frac{\exp(bx+c)}{\exp(bx+c)+1}
= \sigma_l(bx+c).$$
Thus, sums of $n$ logistic functions can be approximated uniformly
and arbitrarily well by sums of differential quotients that can be written by
$2n$ softmax functions.
A lower bound for approximation with $\sigma_m$
would also imply a similar bound for $\sigma_l$ and upper bounds for
approximation with $\sigma_l$ imply upper bounds for $\sigma_m$.

With respect to the logistic function, a better estimate than
(\ref{bad}) is possible.
It can be condensed from a sequence of counterexamples that is derived in
\cite{Maiorov99}.
However, we show that the Vapnik-Chervonenkis dimension (VC
dimension) of related function spaces can also be used to prove sharpness. This
is a rather general approach since many VC dimension estimates are known.
 
Let $X$ be a set and ${\cal A}$ a family of subsets of
$X$. Throughout this paper, $X$ can be assumed to be finite.
One says that ${\cal A}$ shatters a set $S\subset X$ if and only if each
subset $B\subset S$ can be written as $B=S\cap A$ for a set $A\in {\cal A}$. 
The VC dimension of ${\cal A}$ is defined via
\begin{alignat*}{1}
\operatorname{VC-dim}({\cal A}) :=  \sup \{&k\in\mathbb{N} : \exists S\subset X
\text{ with cardinality }\\
& |S|=k \text{ such that } S \text{ is shattered
by } {\cal A}\}.
\end{alignat*}
This general definition can be adapted to (non-linear) function spaces $V$
that consist of functions $g:X\to\mathbb{R}$ on a (finite) set
$X\subset\mathbb{R}$.
By applying Heaviside-function $\sigma_h$,
let
\begin{alignat*}{1}
 {\cal A} := \{ A\subset X : \exists g\in V : & (\forall x\in A:
\sigma_h(g(x))=1)\quad \wedge\\  
& (\forall x\in X\setminus A: \sigma_h(g(x))=0) \}.
\end{alignat*}
Then the VC dimension of function space $V$ is defined as
$\operatorname{VC-dim}(V):=\operatorname{VC-dim}({\cal A})$.
This is the largest
number $m\in\mathbb{N}$ for which $m$ points $x_1$, \dots, $x_m \in X$ exist such that for each sign
sequence $s_1,\dots, s_m\in\{-1, 1\}$ a function $g\in V$
can be found that fulfills
\begin{equation}
s_i \cdot \left(\sigma_h(g(x_i))-\frac{1}{2}\right) > 0,\quad 1\leq i\leq
m,\label{defVC}
\end{equation}
cf.~\cite{Bartlett1996}.
The VC dimension is an
indicator for the number of degrees of freedom in the construction of $V$.
Condition (\ref{defVC}) is equivalent to
$$\sigma_h(g(x_i)) = \sigma_h(s_i),\quad 1\leq i\leq m.
$$

\begin{corollary}[Sharpness due to VC Dimension]\label{thVC}
Let 
$(V_n)_{n=1}^\infty$ be a sequence of (non-linear) function spaces $V_n\subset
B[0, 1]$ such that 
$$E_n(f):=\inf \{ \| f-g\|_{B[0, 1]} : g\in
V_n\}$$ 
fulfills conditions 
(\ref{semisub})--(\ref{mono}) 
on Banach space $C[0, 1]$.
Let $\tau :\mathbb{N} \to \mathbb{N}$ and 
$$G_n:=\left\{ \frac{j}{\tau(n)} : j\in\{0,1,\dots,\tau(n)\}\right\}$$ 
be an equidistant grid on the interval $[0, 1]$. We restrict functions in
$V_n$
 to this grid:
$$ V_{n,\tau(n)}  := \{ h:G_n\to\mathbb{R} : h(x)=g(x)
\text{ for a function } g\in V_n\}.$$
Let function $\varphi(x)$ be
defined as in Theorem \ref{ZUBP} such that (\ref{faktor}) holds true. If, for a
constant $C>0$, function $\tau$ fulfills
\begin{eqnarray}
\operatorname{VC-dim}(V_{n,\tau(n)}) &<& \tau(n),\label{VCcond}\\
\tau(4n) &\leq& \frac{C}{\varphi(n)}, \label{alphacond}
\end{eqnarray}
for all $n\geq n_0\in\mathbb{N}$
then for $r\in\mathbb{N}$ 
and each abstract modulus of smoothness $\omega$ satisfying 
(\ref{odd}),
a counterexample
$f_{\omega}\in C[0,1]$ exists such that 
$$ \omega_r(f_{\omega}, \delta) = {O}\left(\omega(\delta^r)\right)\text{ and
} E_n(f_{\omega}) \neq
o\left(\omega\left([\varphi(n)]^r\right)\right).
$$
\end{corollary}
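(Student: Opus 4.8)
The plan is to apply Theorem~\ref{ZUBP} with $X=C[0,1]$, $E_n$ as given, $S_\delta(f):=\omega_r(f,\delta)$, $\mu(\delta):=\delta^r$, and the function $\varphi$ from the hypothesis (so (\ref{faktor}) holds by assumption). The only non-routine task is to produce resonance test elements $h_n$ that satisfy (\ref{ZUBP1})--(\ref{ZUBP3}). The smoothness bound (\ref{ZZUBP3}) will again be obtained by taking $h_n$ to be a suitably fast trigonometric oscillation, say $h_n(x):=\sin(2\pi\,\kappa(n)\,x)$ for an integer frequency $\kappa(n)$ of order $\tau(4n)$; then $\|h_n\|_{C[0,1]}\le 1=:C_1$ gives (\ref{ZUBP1}), and $\omega_r(h_n,\delta)\le\min\{2^r,\ \delta^r\|h_n^{(r)}\|_\infty\}\le C_2\min\{1,\mu(\delta)/\varphi(n)\}$ follows from (\ref{gegenAbl}) together with (\ref{alphacond}), exactly as in the proofs of Corollaries~\ref{thsharpsplines} and \ref{thsharp3}.

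The heart of the argument is the resonance condition (\ref{ZUBP3}): I must show $E_{4n}(h_n)=\inf\{\|h_n-g\|_{B[0,1]}:g\in V_{4n}\}\ge c_3>0$. Here the VC-dimension hypothesis (\ref{VCcond}) replaces the sign-change counting done via Lemma~\ref{laresonanz} in the earlier corollaries. The idea is as follows. Restrict everything to the grid $G_{4n}$, which has $\tau(4n)+1$ nodes. Since $\operatorname{VC\text{-}dim}(V_{4n,\tau(4n)})<\tau(4n)$, there is a node count on which $V_{4n}$ restricted to $G_{4n}$ cannot realize all $\pm$ sign patterns; more usefully, no $g\in V_{4n}$ can match the sign of $h_n$ at all grid points simultaneously when $h_n$ is chosen to alternate sign along consecutive grid nodes. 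Concretely, pick $\kappa(n)$ so that $h_n=\sin(2\pi\kappa(n)x)$ takes values alternating in sign (indeed, close to $\pm1$) at a set of more than $\operatorname{VC\text{-}dim}(V_{4n,\tau(4n)})$ consecutive grid points of $G_{4n}$; this is arrangeable because $\tau(4n)$ exceeds the VC dimension. Then for every $g\in V_{4n}$ there is at least one such grid point $x_i$ where $\sigma_h(g(x_i))\ne\sigma_h(h_n(x_i))$, i.e. $g(x_i)$ and $h_n(x_i)$ have opposite signs (or $g(x_i)=0$ while $|h_n(x_i)|$ is bounded below), whence $|h_n(x_i)-g(x_i)|\ge|h_n(x_i)|\ge c_3$ for a fixed $c_3>0$ depending only on how far the chosen grid values of $h_n$ are from $0$. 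This yields $\|h_n-g\|_{B[0,1]}\ge c_3$, uniformly in $g\in V_{4n}$ and in $n$, establishing (\ref{ZUBP3}).

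With (\ref{ZUBP1})--(\ref{ZUBP3}) in hand, Theorem~\ref{ZUBP} directly delivers a counterexample $f_\omega\in C[0,1]$ with $S_\delta(f_\omega)=O(\omega(\mu(\delta)))=O(\omega(\delta^r))$ and $E_n(f_\omega)\ne o(\omega(\varphi(n)))$. Since $\mu(\delta)=\delta^r$ and the statement asks for $E_n(f_\omega)\ne o(\omega([\varphi(n)]^r))$, I should actually take $\varphi_{\mathrm{ZUBP}}(x):=[\varphi(x)]^r$ in the application of the theorem; this is still strictly decreasing to $0$ and still satisfies (\ref{faktor}) because $\varphi$ does (raising to the power $r$ only changes the constant $C_\lambda$ to $C_\lambda^r$). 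Correspondingly $\mu(\delta)=\delta^r$ is unchanged, the smoothness estimate (\ref{ZZUBP3}) becomes $S_\delta(h_n)\le C_2\min\{1,\delta^r/[\varphi(n)]^r\}$, which is exactly what the derivative bound gives once one uses $\|h_n^{(r)}\|_\infty\le(2\pi\kappa(n))^r$ with $\kappa(n)\asymp\tau(4n)\le C/\varphi(n)$ from (\ref{alphacond}). The conclusion of Theorem~\ref{ZUBP} then reads precisely $\omega_r(f_\omega,\delta)=O(\omega(\delta^r))$ and $E_n(f_\omega)\ne o(\omega([\varphi(n)]^r))$, as required.

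The main obstacle I anticipate is the bookkeeping in the resonance step: choosing the integer frequency $\kappa(n)$ so that $h_n$ both (i) oscillates rapidly enough to make the $r$th-derivative bound match $1/[\varphi(n)]^r$ and (ii) has sign alternation at a genuinely ``VC-rich'' subset of the grid $G_{4n}$, so that the VC-dimension bound (\ref{VCcond}) forces a sign mismatch with every $g\in V_{4n}$. One has to be slightly careful that the relevant grid points where $h_n$ alternates sign number more than $\operatorname{VC\text{-}dim}(V_{4n,\tau(4n)})$ and that at each such point $|h_n|$ is bounded below by an absolute constant $c_3$; this is where the precise relation between $\kappa(n)$ and $\tau(4n)$ (and the factor $4$ coming from $E_{4n}$ in (\ref{ZUBP3})) must be pinned down. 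Everything else — verifying that $E_n$ inherits (\ref{semisub})--(\ref{mono}) with $D_n=1$, and the smoothness estimate — is routine and parallels Corollary~\ref{thsharpsplines}.
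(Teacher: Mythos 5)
There is a genuine gap in the resonance step. The hypothesis (\ref{VCcond}) only tells you that $G_{4n}$ (which has $\tau(4n)+1>\operatorname{VC-dim}(V_{4n,\tau(4n)})$ points) is \emph{not shattered}, i.e.\ that \emph{some} sign pattern $s_0,\dots,s_{\tau(4n)}$ on the grid is realized by no $g\in V_{4n}$. It does not tell you \emph{which} pattern is missing, and in particular it does not imply that the alternating pattern is missing. A class consisting of a single function that alternates sign at every grid node has VC dimension $0$, yet it realizes the alternating pattern exactly; so your claim that ``no $g\in V_{4n}$ can match the sign of $h_n$ at all grid points'' for $h_n=\sin(2\pi\kappa(n)x)$ simply does not follow from (\ref{VCcond}). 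Once the guaranteed sign mismatch disappears, your lower bound $\|h_n-g\|_{B[0,1]}\ge c_3$ has no support, and (\ref{ZUBP3}) is unproven.

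The fix — and this is what the paper does — is to let the resonance element \emph{encode the specific unrealizable sign sequence}: take a $C^\infty$ bump $h$ supported on $[-1,1]$ with $h(0)=1$, $\|h\|_{B(\mathbb{R})}=1$, and set $h_n(x):=\sum_{i=0}^{\tau(4n)} s_i\,h\bigl(2\tau(4n)(x-i/\tau(4n))\bigr)$ with non-overlapping supports. Then $h_n(i/\tau(4n))=s_i=\pm1$, so for every $g\in V_{4n}$ the guaranteed grid point $x_0$ with $\sigma_h(g(x_0))\neq\sigma_h(s_i)=\sigma_h(h_n(x_0))$ yields $|h_n(x_0)-g(x_0)|\geq 1$, giving (\ref{ZUBP3}) with $c_3=1$; meanwhile $\|h_n\|_{B[0,1]}\leq 1$ and $\|h_n^{(r)}\|_{B[0,1]}=O(\tau(4n)^r)=O([\varphi(n)]^{-r})$ by (\ref{alphacond}), so (\ref{ZUBP1}) and (\ref{ZZUBP3}) hold exactly as you describe. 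A sine wave cannot play this role because it can only realize highly structured (essentially alternating) sign patterns, whereas the missing pattern supplied by the VC bound is arbitrary. The rest of your outline — applying Theorem \ref{ZUBP} with $X=C[0,1]$, $\mu(\delta)=\delta^r$, and $[\varphi(x)]^r$ in the role of the theorem's $\varphi$ (noting that (\ref{faktor}) survives raising to the power $r$) — matches the paper and is fine.
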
 

\begin{proof}
Let $n\geq n_0/4$.
Due to (\ref{VCcond}), a sign sequence $s_{0},\dots,s_{\tau(4n)}\in\{-1, 1\}$
exists such that for each function $g\in
V_{4n}$ there is a point $x_0=\frac{i}{\tau(4n)} \in G_{4n}$ such
that $\sigma_h(g(x_0)) \neq \sigma_h(s_{i})$.

We utilize this sign sequence to construct resonance elements. It
is well known, that auxiliary function
\begin{equation*}
 h(x) := \left\{ \begin{array}{ll}
\exp\left(1-\frac{1}{1-x^2}\right) & \text{for } |x|<1,\\
0 & \text{for } |x|\geq 1,
\end{array}\right.
\end{equation*}
is arbitrarily often differentiable on the real axis, $h(0)=1$,
$\|h\|_{B(\mathbb{R})}=1$.
This function becomes the building block for the resonance elements:
$$ h_n(x):= \sum_{i=0}^{\tau(4n)} s_{i} \cdot
h\left( 2\tau(4n)\left(x-\frac{i}{\tau(4n)}\right)\right).$$ 
The interior of the support of
summands is non-overlapping, i.e., $\|h_n\|_{B[0,1]}\leq 1$, and 
because of (\ref{alphacond}) norm
$\|h_n^{(r)}\|_{B[0,1]}$ of the $r$th derivative is in
$O([\varphi(n)]^{-r})$.

We apply Theorem \ref{ZUBP}
with 
$$X=C[0, 1]\text{, }
S_\delta(f) := \omega_r(f, \delta)\text{, } \mu(\delta):=\delta^r,$$
$E_n(f)$ as defined in the theorem,
and
resonance elements $h_n(x)$ that represent the existing sign sequence.
Function $[\varphi(x)]^r$ fulfills the requirements of function $\varphi(x)$ in
Theorem \ref{ZUBP}.

Then conditions (\ref{ZUBP1}) and (\ref{ZZUBP3}) are fulfilled due to the norms
of $h_n$ and its derivatives, cf.\ (\ref{gegenAbl}). Due to the initial
argument of the proof, for each $g\in V_{4n}$ at least one point $x_0=i
\tau(4n)^{-1}$, $0\leq i\leq \tau(4n)$, exists such that we observe $\sigma_h(g(x_0))\neq\sigma_h(h_n(x_0))$.
Since $|h_n(x_0)|=1$, we get $\|h_n-g\|_{B[0,1]} \geq |h_n(x_0)-g(x_0)| \geq 1$,
and (\ref{ZUBP3}) because $E_{4n} h_n \geq 1$.
\end{proof}

\begin{corollary}[Logistic Function]\label{thsharp5}
Let $\sigma=\sigma_l$ be the logistic function and $r\in\mathbb{N}$. 
For each abstract modulus of smoothness $\omega$ satisfying 
(\ref{odd}),
a counterexample
$f_{\omega}\in C[0,1]$ exists such that 
$$ \omega_r(f_{\omega}, \delta) = {O}\left(\omega(\delta^r)\right)\text{ and
} E(\Phi_{n,\sigma_l}, f_{\omega}) \neq
o\left(\omega\left(\frac{1}{(n [1+\log_2(n)])^r}\right)\right).
$$
\end{corollary}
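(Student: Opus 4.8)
The plan is to apply Corollary \ref{thVC} to the sequence of function spaces $V_n := \Phi_{n,\sigma_l}$, so that the work reduces to (i) verifying that the error functionals $E_n(f) := E(\Phi_{n,\sigma_l}, f)$ satisfy conditions (\ref{semisub})--(\ref{mono}), and (ii) choosing a suitable grid-size function $\tau(n)$ for which the VC-dimension bound (\ref{VCcond}) and the growth condition (\ref{alphacond}) hold with $\varphi(n) = 1/(n[1+\log_2 n])$. Point (i) is immediate: (\ref{nonsublin}) gives (\ref{semisub}) with $D_n = 1$, absolute homogeneity and monotonicity in $n$ were already established in Section \ref{secUBP} for all spaces $\Phi_{n,\sigma}$, and boundedness $E_n(f) \le \|f\|_{B[0,1]}$ is trivial.

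For point (ii) the main input is a known bound on the VC dimension of single-hidden-layer networks with the logistic activation. Restricting functions in $\Phi_{4n,\sigma_l}$ to a finite grid $G_n$ does not increase the VC dimension, so it suffices to bound $\operatorname{VC-dim}(\Phi_{4n,\sigma_l})$ as a class of $\{0,1\}$-valued classifiers $x \mapsto \sigma_h(g(x))$. The standard result (Karpinski--Macintyre; see also \cite{Bartlett1996}) states that a feedforward network with $W$ weights and a fixed number of sigmoidal (logistic) units has VC dimension $O(W^2)$, or more precisely $O((\#\text{parameters})^2 \cdot (\#\text{nodes})^2)$ type bounds; for our architecture with $n$ hidden neurons and one input the number of parameters is $3n$ and one obtains $\operatorname{VC-dim}(\Phi_{n,\sigma_l}) = O(n^2)$. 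Actually, to get the logarithmic factor that appears in the statement, I expect the sharper bound $\operatorname{VC-dim}(\Phi_{n,\sigma_l}) = O(n^2 \log n)$ is what is used — or, reading off the $\varphi(n)=1/(n[1+\log_2 n])$ in the conclusion together with $\tau(4n) \le C/\varphi(n) = C\cdot 4n[1+\log_2(4n)]$, one sees that $\tau(4n)$ is of order $n\log n$, hence $\tau(n)$ of order $n \log n$, and so the VC bound that must be beaten is $\operatorname{VC-dim}(\Phi_{4n,\sigma_l|G_n}) < \tau(n) \sim n\log n$. This forces the use of a VC-dimension estimate of the form $\operatorname{VC-dim}(\Phi_{m,\sigma_l}) = O(m)$ up to logarithmic factors in $m$; I would cite the appropriate theorem (e.g. the linear-in-parameters bound available because the input dimension is $1$, via \cite{Bartlett1996} or Karpinski--Macintyre) and then choose $\tau(n) := c\, n[1+\log_2 n]$ with $c$ large enough that (\ref{VCcond}) holds for all $n \ge n_0$, which automatically arranges $\tau(4n) \le C/\varphi(n)$ in (\ref{alphacond}) by the monotonicity of $x\log x$.

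With $\tau$ fixed this way, Corollary \ref{thVC} applies verbatim: one checks that $\varphi(x) = 1/(x[1+\log_2 x])$ is strictly decreasing to $0$ and satisfies the doubling-type condition (\ref{faktor}) — indeed $\varphi(\lambda x)/\varphi(x) = (x[1+\log_2 x])/(\lambda x[1+\log_2(\lambda x)]) \to 1/\lambda$ as $x\to\infty$, so for each $0<\lambda<1$ there is $X_0(\lambda)$ and $C_\lambda = 2/\lambda$ (say) beyond which (\ref{faktor}) holds. Then the corollary produces, for each abstract modulus $\omega$ satisfying (\ref{odd}), a counterexample $f_\omega \in C[0,1]$ with $\omega_r(f_\omega,\delta) = O(\omega(\delta^r))$ and $E(\Phi_{n,\sigma_l},f_\omega) \neq o(\omega([\varphi(n)]^r)) = o(\omega(1/(n[1+\log_2 n])^r))$, which is exactly the claim.

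The main obstacle is pinning down the right VC-dimension bound: one needs that $\operatorname{VC-dim}(\Phi_{m,\sigma_l})$ grows no faster than $m$ up to a logarithmic factor, and the literature bounds for networks with the logistic activation are typically stated as $O((\text{\#params})^2)$ or with extra polylog factors, so some care is needed to extract a bound that is linear (times $\log$) in $m$ — this is where the special structure of having a \emph{single} input node is essential, since it collapses the parameter count's effect. A secondary, purely technical point is confirming that restriction to the finite grid $G_n$ genuinely cannot increase the VC dimension (it cannot, since shattering on $G_n$ is shattering by the same functions viewed on a subset of $[0,1]$), and that the $n_0$-threshold in Corollary \ref{thVC} is harmless. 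Everything else is bookkeeping already carried out in the proofs of Corollaries \ref{thsharpsplines} and \ref{thsharp3}.
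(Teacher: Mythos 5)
Your overall strategy (apply Corollary \ref{thVC} with $V_n=\Phi_{n,\sigma_l}$, $\varphi(x)=1/(x[1+\log_2 x])$, $\tau(n)\sim n\log n$) is the paper's strategy, and your verifications of (\ref{semisub})--(\ref{mono}), (\ref{faktor}) and (\ref{alphacond}) are fine. But there is a genuine gap at the one step that carries all the weight: the VC-dimension bound. You argue that since restriction to the grid $G_n$ cannot increase the VC dimension, ``it suffices to bound $\operatorname{VC-dim}(\Phi_{4n,\sigma_l})$'' on the continuous domain, and you hope to extract an $O(m\log m)$ bound there, attributing its availability to the single input node. This points in exactly the wrong direction. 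On a continuous domain the VC dimension of $\Phi_{n,\sigma_l}$ is $\Omega(n^2)$ (see \cite[p.~235]{Shai}, as noted after the corollary), so no bound of order $n\,\mathrm{polylog}(n)$ exists for the unrestricted class, and your reduction can at best reproduce the $1/n^{2r}$-type rate of Corollary \ref{corollaryELU}, not the claimed $1/(n[1+\log_2 n])^r$. The near-linear bound that the proof actually needs is Theorem 2 of \cite{Bartlett1996}, which applies to the class $\Phi_{n,\sigma_l}^D$ of networks with inputs restricted to the \emph{discrete} set $\{-D,\dots,D\}$ and reads $\operatorname{VC-dim}(\Phi_{n,\sigma_l}^D)\le 2(3n+1)\log_2(24e(3n+1)D)=O(n[\log_2 n+\log_2 D])$. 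The grid restriction is therefore not a ``secondary, purely technical point''; it is the entire reason the logarithmic rate is attainable, and it is why Corollary \ref{thVC} is formulated with grid-restricted classes in the first place.

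A second consequence of missing the $D$-dependence is that the choice of $\tau$ becomes self-referential: the grid size $D$ (with $\tau(n)=2D(n)$) appears inside the bound that must stay below $D$. The paper resolves this by picking $E>1$ with $(1+\log_2 E)/E<1/(4C)$ and setting $D(n)=\lfloor En(1+\log_2 n)\rfloor$, then checking $\operatorname{VC-dim}(\Phi_{n,\sigma_l}^D)\le Cn[\log_2 n+\log_2 D]<D$. Your ``choose $c$ large enough'' would have to perform this same balancing, but it cannot be carried out with the bound you propose to cite, since that bound does not contain $D$ at all. Everything downstream of the VC estimate in your proposal is correct.
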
 
The corollary extends the Theorem of Maiorov and Meir for worst case
approximation with sigmoid functions in the case $p=\infty$ to Lipschitz
classes and
one condensed counterexample (instead of a sequence),
see \cite[p.~99]{Maiorov99}. The sharpness estimate also holds in $L^p[0, 1]$,
$1\leq p<\infty$. For all these spaces, one can apply Theorem \ref{ZUBP}
directly with the sequence of counterexamples constructed in \cite[Lemma 7,
p.~99]{Maiorov99}. 
Even more generally, Theorem 1 in \cite{Ratsaby99} 
utilizes pseudo-dimension, a generalization of VC
dimension, to provide bounded
sequences of counterexamples in Sobolev spaces. 

\begin{proof}
We apply Corollary \ref{thVC} in connection with a result concerning the
VC dimension of function space ($D\in\mathbb{N}$)
\begin{eqnarray*}
 \Phi_{n,\sigma_l}^D &:=&\big\{g : \{-D,-D+1,\dots, D\} \to\mathbb{R} :\\
&&\qquad g(x)=a_0+ \sum_{k=1}^{n} a_k \sigma_l(b_k x+c_k) :
a_0,a_k,b_k,c_k\in\mathbb{R}\big\},
\end{eqnarray*}
see paper \cite{Bartlett1996} that is based on \cite{Goldberg1995}, cf.~\cite{KARPINSKI1997169}.
Functions are defined on a discrete set with $2D+1$ elements, and in contrast to
the definition of $\Phi_{n,\sigma}$, a constant function with coefficient
$a_0$ is added.

According to Theorem 2 in \cite{Bartlett1996}, the VC dimension of
$\Phi_{n,\sigma_l}^D$ is upper bounded by (n large enough)
$$2 \cdot (3\cdot n+1) \cdot
\log_2 (24e(3\cdot n+1)D),$$
i.e., there exists $n_0\in\mathbb{N}$, $n_0\geq 2$, and a constant $C>0$
such that for all $n\geq n_0$ 
$$\operatorname{VC-dim}(\Phi_{n,\sigma_l}^D) \leq C n[\log_2(n) +
\log_2(D)].
$$
Since $\lim_{E\to\infty} \frac{1+\log_2(E)}{E}=0$, we can choose a
constant $E>1$ such that 
$$\frac{1+\log_2(E)}{E} < \frac{1}{4C}.$$
With this constant, we define $D=D(n):= \lfloor E n(1+\log_2(n))\rfloor$
such that the VC dimension of
$\Phi_{n,\sigma_l}^D$ is less than $D$ for $n\geq n_0$:
\begin{eqnarray*}
\operatorname{VC-dim}(\Phi_{n,\sigma_l}^D) &\leq&
C n[\log_2(n) + \log_2(E n(1+\log_2(n)) )]\\
&\leq&
C n[\log_2(n) + \log_2(E n(\log_2(n)+\log_2(n)) )]\\
&\leq&
C n[2\log_2(n) + \log_2(E) +\log_2(2\log_2(n)) )]\\
&\leq&
C n[3\log_2(n) + \log_2(E) + 1  )]
\leq 4C n\log_2(n)[1+\log_2(E)]\\
&<& E n\log_2(n) \leq \lfloor  E n(1+\log_2(n)) \rfloor
= D.
\end{eqnarray*}

By applying an affine transform that maps interval $[-D, D]$ to $[0,
1]$ and by omitting constant function $a_0$, we immediately see that for
$V_n:=\Phi_{n,\sigma_l}$ and $\tau(n):=2D(n)$
$$
\operatorname{VC-dim}(V_{n,\tau(n)}) < D(n) =\frac{\tau(n)}{2} < \tau(n)
$$
such that (\ref{VCcond}) is fulfilled.

We define strictly decreasing 
$$\varphi(x):=\frac{1}{x[1+\log_2(x)]}.$$
Obviously, $\lim_{x\to\infty} \varphi(x) = 0$. Condition (\ref{faktor}) holds:
Let $x>X_0(\lambda):=\lambda^{-2}$. Then
$\log_2(\lambda) > -\log_2(x)/2$ and
$$
\varphi(\lambda x) =\frac{1}{\lambda} \frac{1}{x(1+\log_2(x)+\log_2(\lambda))}
< \frac{1}{\lambda} \frac{1}{x(1+\frac{1}{2}\log_2(x))}
< \frac{2}{\lambda} \varphi(x).
$$

Also, (\ref{alphacond}) is fulfilled:  
\begin{alignat*}{1}
\tau(4n) &=2D(4n) =  2E\cdot 4n(1+\log_2(4n)) 
= 8E\cdot n(3+\log_2(n))\\ 
&< 24E\cdot n(1+\log_2(n)) =\frac{24E}{\varphi(n)}.
\end{alignat*}
Thus, all prerequisites of Corollary \ref{thVC} are shown.
\end{proof}

The corollary improves (\ref{bad}): There exists a
counterexample $f_{\omega}\in C[0,1]$ such that (see (\ref{jacksonneu}),
(\ref{jacksonneu2}))
\begin{alignat}{1}
\omega_r(f_{\omega}, \delta) &\in {O}\left(\omega(\delta^r)\right),\nonumber\\
E(\Phi_{n,\sigma_l}, f_\omega)
&\in O\left(\omega\left(\frac{1}{n^r}\right)\right)\text{ and }
E(\Phi_{n,\sigma_l}, f_\omega)
\neq
o\left(\omega\left(\frac{1}{n^r[1+\log_2(n)]^r}\right)\right).\label{bad2}
\end{alignat}

The proof is based on the $O(n\log_2(n))$ estimate of VC dimension in 
\cite{Bartlett1996}. This requires functions to be defined on a finite grid.
Without this prerequisite, the VC dimension is in $\Omega(n^2)$, see
\cite[p.~235]{Shai}. The referenced book also deals with the case that 
all weights are restricted to
floating point numbers with a fixed number of bits. Then the VC dimension
becomes bounded by $O(n)$ without the need for the $\log$-factor. However, 
direct upper error bounds (\ref{jacksonneu})
and (\ref{jacksonneu2}) are proved for real-valued
weights only.

The preceding corollary is a prototype for proving sharpness based on known
VC dimensions. Also at the price of a log-factor, the VC dimension
estimate for radial basis functions in \cite{Bartlett1996} or \cite{Schmitt01} can be used similarly in
connection with Corollary \ref{thVC} to construct counterexamples.
The sharpness results for
Heaviside, cut, ReLU and inverse tangent activation functions shown above for
$p=\infty$ can also be obtained with Corollary \ref{thVC} by proving that VC
dimensions of corresponding function spaces $\Phi_{n,\sigma}$ are in $O(n)$
(whereas the result of \cite{Bartlett1998} only provides an $O(n\log(n))$
bound).
This in turn can be shown by estimating the maximum number of zeroes 
like in the proof of the next corollary and
in
the same manner as in the proofs of Corollaries \ref{thsharpsplines} and \ref{thsharp3}.

The problem
of different rates in upper and lower bounds
arises because different scaling coefficients $b_k$ are allowed. 
In the case of uniform scaling, i.e.\ all coefficients $b_k$ in (\ref{eqtry}) 
have the same value $b_k=B=B(n)$,
the number of zeroes is bounded by $4n-1$ instead of $16^n-2$.
Let
$$ \tilde{\Phi}_{n,\sigma_l}:=\left\{g : [0,1]\to\mathbb{R} : g(x)=\sum_{k=1}^n
a_k \sigma_l(B x+c_k) :
a_k,B,c_k\in\mathbb{R}\right\}$$
be the non-linear function space generated by uniform scaling.
Because the 
quasi-inter\-po\-la\-tion operators used in the proof of Debao's direct estimate
(\ref{d1}), see \cite{Chen93}, are defined using such uniform scaling, see
\cite[p.~172]{Cheney2000}, the error bound $$ E(\tilde{\Phi}_{n,\sigma_l}, f) 
:= \inf\{\|f-g\|_{B[0, 1]} : g\in \tilde{\Phi}_{n,\sigma_l}\}
\leq \omega_1\left(f, \frac{1}{n}\right)
$$
holds. This bound is sharp:

\begin{corollary}[Logistic Function with Restriction]\label{thsharp4}
Let $\sigma=\sigma_l$ be the logistic function and $r\in\mathbb{N}$. 
For each abstract modulus of smoothness $\omega$ satisfying 
(\ref{odd}),
there exists a counterexample
$f_{\omega}\in C[0,1]$ such that 
$$ \omega_r(f_{\omega}, \delta) = {O}\left(\omega(\delta^r)\right)\text{ and
} E(\tilde{\Phi}_{n,\sigma_l}, f_{\omega}) \neq
o\left(\omega\left(\frac{1}{n^r}\right)\right).
$$
\end{corollary}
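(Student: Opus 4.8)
The plan is to reproduce the argument of Corollary~\ref{thsharp3} with $\Phi_{n,\sigma_a}$ replaced by $\tilde\Phi_{n,\sigma_l}$; the single new ingredient is that uniform scaling collapses the exponential zero bound for $\Phi_{n,\sigma_l}$ into a linear one, which is exactly what allows $\varphi(x)=1/x^r$ instead of the $1/16^x$ of (\ref{bad}). First I would apply Theorem~\ref{ZUBP} with $X=C[0,1]$, $E_n(f):=E(\tilde\Phi_{n,\sigma_l},f)$, $S_\delta(f):=\omega_r(f,\delta)$, $\mu(\delta):=\delta^r$ and $\varphi(x):=1/x^r$ (so (\ref{faktor}) holds with $C_\lambda=\lambda^{-r}$, $X_0(\lambda)=\lambda^{-1}$), and with test elements $h_n(x):=\sin(2N(n)\cdot 2\pi x)$, $N(n):=4n$. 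Then (\ref{ZUBP1}) is clear from $\|h_n\|_{C[0,1]}=1=:C_1$, and (\ref{ZZUBP3}) follows from $\omega_r(h_n,\delta)\le 2^r$ together with $\omega_r(h_n,\delta)\le\delta^r\|h_n^{(r)}\|_{C[0,1]}\le (16\pi n)^r\delta^r=(16\pi)^r\mu(\delta)/\varphi(n)$ by (\ref{gegenAbl}).

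The main computation is the resonance condition (\ref{ZUBP3}), i.e.\ $E(\tilde\Phi_{4n,\sigma_l},h_n)\ge c_3>0$. For $g\in\tilde\Phi_{4n,\sigma_l}$, write $g(x)=\sum_{k=1}^{4n}a_k/(1+e^{-c_k}e^{-Bx})$ and substitute $t:=e^{-Bx}$, a monotone bijection of $[0,1]$ onto an interval in $(0,\infty)$ when $B\neq0$ (and $g$ is constant when $B=0$). Over the common denominator $\prod_{k=1}^{4n}(1+e^{-c_k}t)$ this exhibits $g$ as $P(e^{-Bx})/Q(e^{-Bx})$ with $\deg P\le 4n-1$, so a non-zero $g$ has at most $4n-1$ zeros on $[0,1]$ by the fundamental theorem of algebra. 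Padding these zeros together with $0$ and $1$ by dummy break points up to $4n+2$ points makes $g$ satisfy the hypotheses of Lemma~\ref{laresonanz} with $N=4n$, whence $\|h_n-g\|_{C[0,1]}\ge c>0$ with $c$ independent of $n$ and $g$ (and $\|h_n-0\|=1\ge c$); this gives (\ref{ZUBP3}) with $c_3:=c$. This is the step that pins down the rate $1/n^r$ and the only place the restriction on scalings is genuinely exploited.

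It remains to verify (\ref{semisub})--(\ref{mono}) for $E_n$: (\ref{semilin}) by rescaling the amplitudes $a_k\mapsto c\,a_k$, (\ref{bounded}) with $D_n=1$ since $0\in\tilde\Phi_{n,\sigma_l}$, and (\ref{mono}) since $\tilde\Phi_{n,\sigma_l}\subseteq\tilde\Phi_{n+1,\sigma_l}$. I expect (\ref{semisub}) to be the real obstacle, because $\tilde\Phi_{n,\sigma_l}$ is \emph{not} closed under adding two summands with different scalings $B$, so (\ref{nonsublin}) is not available verbatim. The plan here is to note that (\ref{semisub}) enters the proof of Theorem~\ref{ZUBP} only to bound finite linear combinations of the (smooth) test elements $h_{n_k}$ and the counterexample; the quantity actually needed, $E(\tilde\Phi_{n,\sigma_l},\psi)=o(\omega(\varphi(n)))$ for a fixed trigonometric polynomial $\psi$, can be secured directly — from Debao's estimate $E(\tilde\Phi_{n,\sigma_l},\psi)\le\omega_1(\psi,1/n)$ when $r=1$, and for $r\ge2$ from the fact that such a $C^\infty$ function is approximated by $\tilde\Phi_{n,\sigma_l}$ at an arbitrary algebraic rate (higher-order quasi-interpolation with one large common scaling) — so that the gliding hump of Theorem~\ref{ZUBP} still runs. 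Granting this, Theorem~\ref{ZUBP} yields $f_\omega\in C[0,1]$ with $\omega_r(f_\omega,\delta)=O(\omega(\delta^r))$ from (\ref{smooth}) and $E(\tilde\Phi_{n,\sigma_l},f_\omega)\neq o(\omega(1/n^r))$ from (\ref{counter}).
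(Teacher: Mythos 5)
Your resonance argument is sound and is essentially the alternative route the paper itself endorses right after its proof (``By applying Lemma \ref{laresonanz} \dots in connection with Theorem \ref{ZUBP}, one can also show Corollary \ref{thsharp4} for $L^p$-spaces''): you bound the number of zeros of a nonzero $g\in\tilde{\Phi}_{4n,\sigma_l}$ by $4n-1$ via the common denominator (the same count the paper gets from \cite{Tossavainen07}) and feed $\sin(2N\cdot 2\pi x)$ into Lemma \ref{laresonanz}. The printed proof instead routes this zero count through Corollary \ref{thVC}: with $\tau(n)=2n$ and $\varphi(n)=1/n$, shattering the grid would by Lemma \ref{lemmazeores} force a nonzero element with $n$ zeros, contradicting the bound $n-1$; the resonance elements are then sums of $C^\infty$ bumps carrying an un-shatterable sign pattern rather than sines. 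The two are equivalent in substance, and your version extends verbatim to $L^p$.

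The gap is in your treatment of (\ref{semisub}). You are right that $\tilde{\Phi}_{n,\sigma_l}+\tilde{\Phi}_{n,\sigma_l}\not\subseteq\tilde{\Phi}_{2n,\sigma_l}$, so (\ref{nonsublin}) is not available verbatim --- a subtlety the paper itself passes over by simply asserting that (\ref{semisub})--(\ref{mono}) hold for $E(\tilde{\Phi}_{n,\sigma_l},\cdot)$. But your repair does not cover all the places where (\ref{semisub}) is used in the proof of Theorem \ref{ZUBP}. Besides (\ref{eq2}) (decay of $E_n$ on fixed finite combinations of test elements, which your direct estimates could conceivably replace), the case $m=2$ of (\ref{semisub}) yields (\ref{abres}), $E_n(f+g)\ge E_{2n}(f)-E_n(g)$, which is applied twice in the gliding hump with $f+g=f_\omega$ and with $f$ equal to the hump plus the infinite tail. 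There one needs $E_{2n}(u+v)\le E_n(u)+E_n(v)$ for $u$ the counterexample (or hump plus tail) and $v$ a head of the series, and the only known proof adds two near-best approximants from $\tilde{\Phi}_{n,\sigma_l}$ that may carry \emph{different} scalings $B$. Knowing $E_n(\psi)=o(\omega(\varphi(n)))$ for fixed smooth $\psi$ supplies no such subtraction inequality, so the hump estimate does not ``still run'' as claimed. Moreover, your fallback for $r\ge 2$ --- that a fixed trigonometric polynomial is approximated by $\tilde{\Phi}_{n,\sigma_l}$ at arbitrary algebraic rate --- is unsubstantiated: both higher-order direct estimates in the paper (Theorems \ref{directsigmoid} and \ref{directsigmoid2}) rely on distinct scalings $b_k$. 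To close the argument you would need either a proof of (\ref{abres}) for the uniform-scaling class or a reorganization of the hump so that near-subadditivity is only invoked for summands sharing one scaling; as it stands, this step is missing (in your write-up and, arguably, in the paper's).
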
 

To prove the corollary, we apply following lemma.

\begin{lemma}\label{lemmazeores}
$V_n\subset
C[0, 1]$, $2\leq \tau(n)\in\mathbb{N}$, $G_n:=\left\{ \frac{j}{\tau(n)} :
j\in\{0,1,\dots,\tau(n)\}\right\}$, and
$V_{n,\tau(n)}  := \{ h:G_n\to\mathbb{R} : h(x)=g(x)
\text{ for a function } g\in V_n\}$ are given as in Corollary \ref{thVC}.
If $\operatorname{VC-dim}(V_{n,\tau(n)}) \geq \tau(n)$ then 
there exists a function $g\in V_n$, $g\not\equiv 0$, with a set of at least
$\lfloor \tau(n)/2\rfloor$ zero points in $[0, 1]$ such that $g$ has
non-zero function values between each two consecutive points of
this set.
\end{lemma}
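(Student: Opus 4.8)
\emph{Proof plan.} The plan is to unpack the hypothesis and feed the shattering property a single, alternating sign sequence, then read off the zeros by the intermediate value theorem. By the definition of VC dimension recalled above, $\operatorname{VC-dim}(V_{n,\tau(n)}) \geq \tau(n)$ means there are $\tau(n)$ points of $G_n$, which I relabel in increasing order as $y_1 < y_2 < \dots < y_{\tau(n)}$, such that for every sign pattern $(s_i)_{i=1}^{\tau(n)}\in\{-1,1\}^{\tau(n)}$ some $g\in V_n$ realizes it, i.e.\ $s_i(\sigma_h(g(y_i))-\tfrac12)>0$ for all $i$. I would apply this to $s_i:=(-1)^{i+1}$; since $\sigma_h(t)=1$ iff $t\geq 0$, the resulting $g\in V_n$ satisfies $g(y_i)\geq 0$ for odd $i$ and $g(y_i)<0$ for even $i$. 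In particular $g(y_2)<0$ (here $\tau(n)\geq 2$ is used), so $g\not\equiv 0$.

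Next I would extract the zeros in parity pairs. Put $M:=\lfloor \tau(n)/2\rfloor\geq 1$. For each $j\in\{1,\dots,M\}$ the function $g\in C[0,1]$ has $g(y_{2j-1})\geq 0$ and $g(y_{2j})<0$, so the intermediate value theorem yields a zero $z_j\in[y_{2j-1},y_{2j})$ of $g$ (note $z_j\neq y_{2j}$ because $g(y_{2j})<0$ strictly; if $g(y_{2j-1})=0$ one may simply take $z_j=y_{2j-1}$). Since $z_j < y_{2j} < y_{2j+1} \leq z_{j+1}$, the points $z_1<z_2<\dots<z_M$ are distinct, giving the required set of $\lfloor\tau(n)/2\rfloor$ zeros in $[0,1]$.

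Finally I would verify the ``nonzero between consecutive zeros'' clause: for consecutive $z_j,z_{j+1}$ in this set one has $z_j<y_{2j}<z_{j+1}$ and $g(y_{2j})<0$, so $g$ is nonzero at the point $y_{2j}\in(z_j,z_{j+1})$. Together with $g\not\equiv 0$ this gives exactly the asserted function and zero set. The only delicate point is that $\sigma_h$ is only right-continuous, so the shattering hypothesis delivers $g(y_i)\geq 0$ (not $>0$) at the ``$+1$'' nodes; the parity pairing is precisely what circumvents this, because the strict inequality always sits at the even-indexed endpoint $y_{2j}$, which is enough both to pin a genuine zero strictly to its left and to certify a nonzero value between consecutive zeros. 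The case $\tau(n)$ odd is harmless, as the leftover node $y_{\tau(n)}$ is simply discarded.
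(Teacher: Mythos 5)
Your proof is correct and follows essentially the same route as the paper: shatter $\tau(n)$ grid points with the alternating sign pattern, use the intermediate value theorem on each odd--even pair to produce $\lfloor\tau(n)/2\rfloor$ distinct zeros, and note that the strictly negative value at each even-indexed node separates consecutive zeros. Your explicit attention to the non-strict inequality at the ``$+1$'' nodes (since $\sigma_h(t)=1$ iff $t\geq 0$) is the same case distinction the paper makes when it allows the zero to sit at the left endpoint $j_{2k-1}\tau(n)^{-1}$.
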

\begin{proof}
Because of the VC dimension, a subset
$J\subset\{0,1,\dots,\tau(n)\}$ with $\tau(n)$ elements $j_1 < j_2 <
\dots < j_{\tau(n)}$ and a function $g\in V_n$ exist such that
$$\sigma_h\left(g\left(\frac{j_k}{\tau(n)}\right)\right) =
\frac{1+(-1)^{k+1}}{2},\, 1\leq k\leq \tau(n).$$ Then we find a zero on each
interval $[j_{2k-1} \tau(n)^{-1}, j_{2k} \tau(n)^{-1})$,
$1\leq k\leq \lfloor \tau(n)/2\rfloor$:
If $g(j_{2k-1} \tau(n)^{-1})\neq 0$ then continuous $g$ has a
zero on the interval 
$$(j_{2k-1} \tau(n)^{-1}, j_{2k} \tau(n)^{-1}).$$ 
Thus, $g$ has $\lfloor \tau(n)/2\rfloor$ zeroes on different
sub-intervals.
Between zeroes are non-zero, negative function values
$g(j_k \tau(n)^{-1})$ for even
$k$ because $\sigma_h(g(j_k \tau(n)^{-1})) = 0$.
\end{proof}

\begin{proof} (of Corollary \ref{thsharp4})
We apply Corollary \ref{thVC} with $V_n=\tilde{\Phi}_{n,\sigma_l}$ 
and $E_n(f) = E(\tilde{\Phi}_{n,\sigma_l}, f)$ such that conditions
(\ref{semisub})--(\ref{mono}) are fulfilled. Let $\tau(n):=2n$ and
$\varphi(n)=1/n$ such that (\ref{alphacond}) holds true: $\tau(4n) =8n
=8/\varphi(n)$.
Assume that $\operatorname{VC-dim}(V_{n,\tau(n)}) \geq
\tau(n)$, then according to Lemma \ref{lemmazeores} there exists a function
$f\in \tilde{\Phi}_{n,\sigma_l}$ such that $f\not\equiv 0$ has $\lfloor
\tau(n)/2\rfloor = n$ zeroes.
However, we can write $f$ as
$$f(x)= \sum_{k=1}^{n}
\frac{a_k}{1+e^{-c_k}(e^{-B})^x}=\frac{s(x)}{q(x)}.$$ 
Using a common denominator
$q(x)$, the numerator is a sum of type 
$$s(x)=\sum_{k=0}^{n-1} \alpha_k
(e^{-kB})^x$$ 
which has at most $n-1$ zeroes, see \cite{Tossavainen07}.
Because of this contradiction to $n$ zeroes, (\ref{VCcond}) is fulfilled.
%
\end{proof}
By applying Lemma \ref{laresonanz} for $N(n)=n-1$ in connection with Theorem
\ref{ZUBP}, one can also show Corollary \ref{thsharp4} for $L^p$-spaces, $1\leq
p<\infty$.

Linear VC dimension bounds were proved in \cite{Schmitt02} for radial basis
function networks with uniform width (scaling) or uniform centers. Such bounds can be
used with Corollary \ref{thVC} to prove results that are similar to Corollary
\ref{thsharp4}. Also, such a corollary 
can be shown 
for the ELU function $\sigma_e$.
However, without the restriction $b_k=B(n)$, piecewise
superposition of exponential functions leads to $O(n^2)$ zeroes of sums of ELU functions. 
Then in combination with direct estimates
Theorems \ref{directsigmoid} and \ref{directsigmoid2}, i.e.,
$E(\Phi_{n,\sigma_e}, f_{\omega})  \leq C_r \omega_r\left(f_{\omega}, \frac{1}{n}\right)$, 
we directly obtain following (improvable) result in a straightforward manner. 

\begin{corollary}[Coarse estimate for ELU activation]\label{corollaryELU}
Let $\sigma=\sigma_e$ be the ELU function and $r\in\mathbb{N}$, $n\geq\max\{2,
r\}$ (see Theorem \ref{directsigmoid}). For each abstract modulus of smoothness
$\omega$ satisfying (\ref{odd}),
there exists a counterexample
$f_\omega\in C[0,1]$ that fulfills
\begin{alignat}{1}
 E(\Phi_{n,\sigma_e}, f_{\omega})  &\leq C_r
\omega_r\left(f_{\omega},
\frac{1}{n}\right) \in {O}\left(\omega\left(\frac{1}{n^{r}}\right)\right)\text{
and }\nonumber\\  
E(\Phi_{n,\sigma_e}, f_{\omega}) &\neq
o\left(\omega\left(\frac{1}{n^{2r}}\right)\right)\label{notsharp}.
\end{alignat}
\end{corollary}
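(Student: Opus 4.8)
The plan is to apply the adapted uniform boundedness principle, Theorem~\ref{ZUBP}, along exactly the lines of the proofs of Corollaries~\ref{thsharpsplines} and~\ref{thsharp3}, but with the scaling function $\varphi(x):=1/x^{2r}$ in place of $1/x^r$; the extra square reflects that a sum of $n$ ELU functions can have of the order of $n^2$ sign changes instead of $O(n)$. Concretely I would take $X=C[0,1]$, $E_n(f):=E(\Phi_{n,\sigma_e},f)$, $S_\delta(f):=\omega_r(f,\delta)$, $\mu(\delta):=\delta^r$ and $\varphi(x):=1/x^{2r}$. The best-approximation functionals $E_n$ satisfy (\ref{semisub})--(\ref{mono}) with $D_n=1$ by the discussion in Section~\ref{secUBP} (cf.\ (\ref{nonsublin})), $S_\delta\in X^\sim$, and (\ref{faktor}) is immediate because $\varphi(\lambda x)=\lambda^{-2r}\varphi(x)$.

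The heart of the argument is the resonance condition (\ref{ZUBP3}), and this is where the ELU-specific combinatorics enters. As resonance elements I would take $h_n(x):=\sin(2N(n)\cdot 2\pi x)$ with $N(n):=c\,n^2$ for a suitable positive integer constant $c$. Then (\ref{ZUBP1}) is trivial, and (\ref{ZZUBP3}) follows from $\omega_r(h_n,\delta)\le 2^r$ together with $\omega_r(h_n,\delta)\le\delta^r\|h_n^{(r)}\|_{C[0,1]}\le(4\pi N(n))^r\delta^r=(4\pi c)^r\,\mu(\delta)/\varphi(n)$, using (\ref{gegenAbl}). For (\ref{ZUBP3}) via Lemma~\ref{laresonanz} I must show that every $g\in\Phi_{4n,\sigma_e}$ is single-signed on each of at most $N(n)$ subintervals of $[0,1]$. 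The argument I would give: each summand $a_k\sigma_e(b_kx+c_k)$ contributes at most one breakpoint (at $x=-c_k/b_k$ when $b_k\ne0$), so $[0,1]$ splits into at most $4n+1$ subintervals on each of which — using $\sigma_e(t)=\alpha(e^t-1)$ for $t<0$ and $\sigma_e(t)=t$ for $t\ge0$ — the function $g$ equals an affine function plus a sum of at most $4n$ exponentials $\beta_ke^{b_kx}$; differentiating removes the affine part and, after merging coinciding exponents, leaves an exponential sum with at most $4n+1$ distinct terms, which by the zero bound for exponential sums in \cite{Tossavainen07} has at most $4n$ zeros, so $g$ has at most $4n+1$ zeros on that subinterval unless it vanishes identically there (a harmless single-sign piece). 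Altogether $g$ has $O(n^2)$ sign changes, uniformly in $g\in\Phi_{4n,\sigma_e}$, so $N(n)$ can indeed be chosen of order $n^2$, and Lemma~\ref{laresonanz} yields $\|h_n-g\|_{C[0,1]}\ge c_3>0$ for all $g\in\Phi_{4n,\sigma_e}$, i.e.\ $E_{4n}(h_n)\ge c_3$.

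With all hypotheses of Theorem~\ref{ZUBP} verified, for every abstract modulus $\omega$ satisfying (\ref{odd}) one obtains $f_\omega\in C[0,1]$ with $\omega_r(f_\omega,\delta)=S_\delta(f_\omega)=O(\omega(\mu(\delta)))=O(\omega(\delta^r))$ and $E(\Phi_{n,\sigma_e},f_\omega)=E_n(f_\omega)\ne o(\omega(\varphi(n)))=o(\omega(1/n^{2r}))$, which is already the lower assertion of (\ref{notsharp}). For the upper assertion I would invoke the direct estimate: $\sigma_e$ is nearly exponential (with $a=1/\alpha$, $b=1$, $c=0$, $d=1$, as remarked after Theorem~\ref{directsigmoid}) — and also arbitrarily often differentiable and non-polynomial on $(-\infty,0)$ — so Theorem~\ref{directsigmoid} (equivalently Theorem~\ref{directsigmoid2}) gives $E(\Phi_{n,\sigma_e},f_\omega)\le C_r\,\omega_r(f_\omega,1/n)$ for $n\ge\max\{2,r\}$, and the right-hand side is $O(\omega(1/n^r))$ by the smoothness estimate above with $\delta=1/n$.

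The step I expect to be the main obstacle is the uniform zero count: one must check carefully that the pieces of $g$ between breakpoints are genuinely of the affine-plus-exponential-sum form, that repeated exponents are merged before invoking the exponential-sum zero bound, and that a subinterval on which $g\equiv0$ does not violate the hypotheses of Lemma~\ref{laresonanz}. Extracting from this an honest bound $N(n)=O(n^2)$ depending only on $n$ and not on the particular weights is the only nonroutine point; everything else is a direct transcription of the pattern already carried out for $\sigma_a$ in Corollary~\ref{thsharp3}.
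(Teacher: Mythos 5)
Your proof is correct, and the combinatorial core --- splitting $[0,1]$ at the $O(n)$ breakpoints of the ELU summands, recognizing each piece of $g$ as an affine function plus an exponential sum, and invoking the zero bound of \cite{Tossavainen07} after differentiating to get $O(n)$ zeroes per piece and hence $O(n^2)$ sign changes in total --- is exactly the computation the paper performs. The route around that core is genuinely different, however. You apply Theorem \ref{ZUBP} directly with sine resonance elements $h_n(x)=\sin(2N(n)\cdot 2\pi x)$, $N(n)\asymp n^2$, and obtain the resonance condition from Lemma \ref{laresonanz}, mirroring the proofs of Corollaries \ref{thsharpsplines} and \ref{thsharp3}. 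The paper instead routes the argument through Corollary \ref{thVC}: it sets $\tau(n):=8n^2$, $\varphi(n)=1/n^2$, and shows condition (\ref{VCcond}) by contradiction --- if $\operatorname{VC-dim}(V_{n,\tau(n)})\geq\tau(n)$, Lemma \ref{lemmazeores} would produce a function in $\Phi_{n,\sigma_e}$ with $(2n)^2>(n+1)^2$ separated zeroes, contradicting the same piecewise exponential-sum count --- and then uses the bump-function resonance elements built into Corollary \ref{thVC}. Your version buys a slightly stronger conclusion at no extra cost: since Lemma \ref{laresonanz} is stated for all $X^p[0,1]$, $1\leq p\leq\infty$, the counterexample also works in $L^p$, whereas the VC route as written is confined to the sup-norm (the paper itself notes this trade-off after Corollary \ref{thsharp4}). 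The paper's version buys uniformity of presentation, exhibiting the ELU case as another instance of the VC-dimension machinery developed for the logistic function. Your cautions about merging coinciding exponents before applying the exponential-sum zero bound and about subintervals where $g$ vanishes identically are well placed, and both are handled exactly as you anticipate.
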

\begin{proof}
To prove the existence of a function $f_\omega$ with $\omega_r\left(f_{\omega},
\delta\right) \in {O}\left(\omega\left(\delta^{r}\right)\right)$ and
(\ref{notsharp}), we apply Corollary \ref{thVC} with $V_n=\Phi_{n,\sigma_e}$ 
and $E_n(f) = E(\Phi_{n,\sigma_e}, f)$ such that conditions
(\ref{semisub})--(\ref{mono}) are fulfilled. 
For each function $g\in V_n$ 
the interval $[0, 1]$ can be divided into at most $n+1$ subintervals such that 
on the $l$th interval $g$ equals a function $g_l$ of type
$$ g_l(x) = \gamma_l + \delta_l x + \sum_{k=1}^n \alpha_{l,k} \exp(\beta_{l,k}
x). $$
Derivative
$$ g_l'(x) = \delta_l \exp(0\cdot x) + \sum_{k=1}^n \alpha_{l,k}\beta_{l,k}
\exp(\beta_{l,k} x) $$
has at most $n$ zeroes or equals the zero function according to
\cite{Tossavainen07}. Thus, due to the mean value theorem (or Rolle's theorem),
$g_l$ has at most $n+1$ zeroes or is the zero function. By concatenating
functions $g_l$ to $g$, one observes that $g$ has at most $(n+1)^2$ different
zeroes such that $g$ does not vanish between such consecutive zero points.

Let $\tau(n):=8 n^2$ and
$\varphi(n)=1/n^2$ such that (\ref{alphacond}) holds true: $\tau(4n) = 128 n^2
=128/\varphi(n)$.
If $\operatorname{VC-dim}(V_{n,\tau(n)}) \geq
\tau(n)$ then due to Lemma \ref{lemmazeores} and because $n\geq 2$ there exists
a function in $\Phi_{n,\sigma_e}$ with at least $\lfloor
\tau(n)/2\rfloor=(2n)^2>(n+1)^2$ zeroes such that between consecutive zeroes,
the function is not the zero function.
This contradicts the previously determined number of zeroes and (\ref{VCcond})
is fulfilled.
\end{proof}

Sums of $n$ softsign functions $\varphi(x)=x/(1+|x|)$ can be expressed piecewise
by $n+1$ rational functions that each have at most $n$ zeroes. Thus, one also
has to deal with $O(n^2)$ zeroes.

\section{Conclusions}\label{secFuture}
Corollaries
\ref{thsharpsplines} and \ref{thsharp3} can be seen
in the context of Lipschitz classes. Let $r:=1$ for the Heaviside, $r:=2$ for cut and ReLU functions
and $r\in\mathbb{N}$ for the inverse tangent activation function
$\sigma=\sigma_a$.
By choosing 
$\omega(\delta):=\delta^\alpha$, a counterexample
$f_\alpha\in X^p[0, 1]$ exists for each $\alpha\in (0,r)$
such that
$$ \omega_r(f_\alpha, \delta)_p \in O\left( \delta^\alpha \right),\text{ } 
E(\Phi_{n,\sigma}, f_{\alpha})_p \in O\left(\frac{1}{n^\alpha}\right) \text {,
and }  E(\Phi_{n,\sigma}, f_{\alpha})_p \neq o\left(\frac{1}{n^\alpha}\right).
$$
With respect to Corollary \ref{thsharp5} for the logistic function, in general
no higher convergence order $\frac{1}{n^\beta}$, $\beta>\alpha$ can be expected for
functions in the Lipschitz class that is defined via 
$\operatorname{Lip}^r(\alpha, C[0, 1]):=\{f \in C[0,1] :
\omega_r(f, \delta) = O\left(\delta^\alpha\right)\}$.

In terms of (non-linear) Kolmogogrov $n$-width,
let $X:=\operatorname{Lip}^r(\alpha, C[0, 1])$. Then, for example, condensed
counterexamples $f_\alpha$ for piecewise linear
or inverse tangent activation functions and $p=\infty$ imply
\begin{alignat*}{1}
W_n &:= \inf_{b_1,\dots,b_n, c_1\dots,c_n} \sup_{f\in
\operatorname{Lip}^r(\alpha, C[0, 1])} \inf_{a_1,\dots,a_n}\left\| f(\cdot)- \sum_{k=1}^n a_k\sigma(b_k\, \cdot\, +
c_k)\right\|_{B[0,1]} \\
&\geq \inf_{b_1,\dots,b_n, c_1\dots,c_n} \inf_{a_1,\dots,a_n}\left\|
f_\alpha(\cdot)- \sum_{k=1}^n a_k\sigma(b_k\, \cdot\, + c_k)\right\|_{B[0,1]} \\
&= E(\Phi_{n,\sigma}, f_{\alpha}) \neq o\left(\frac{1}{n^\alpha}\right).
\end{alignat*}

The restriction to the univariate case of a single input node was chosen 
because of compatibility with most cited error bounds. 
However, the error of multivariate approximation with certain activation functions 
can be bounded by the error of best multivariate
polynomial approximation, see proof of Theorem 6.8 in
\cite[p.~176]{pinkus_1999}.
Thus, one can obtain estimates in terms of multivariate radial moduli of
smoothness similar to Theorem \ref{directsigmoid2} via \cite[Corollary 4, p.~139]{JohnenScherer}.
Also, Theorem \ref{ZUBP} can be
applied in a multivariate context in connection with VC dimension bounds. First
results are shown in report \cite{Goebbels20b}.

Without additional restrictions, a lower estimate for approximation with
logistic function $\sigma_l$ could only be obtained with a log-factor in
(\ref{bad2}). Thus, either direct bounds  (\ref{d1}) and (\ref{jacksonneu}) or
sharpness result (\ref{bad2}) can be improved slightly. 

\subsection*{Acknowledgment}
I would like to thank an anonymous reviewer, Michael Gref, Christian Neumann,
Peer Ueberholz, and Lorens Imhof for their valuable comments.

\bibliographystyle{spmpsci}
\bibliography{neural}

\begin{thebibliography}{10}
\providecommand{\url}[1]{{#1}}
\providecommand{\urlprefix}{URL }
\expandafter\ifx\csname urlstyle\endcsname\relax
  \providecommand{\doi}[1]{DOI~\discretionary{}{}{}#1}\else
  \providecommand{\doi}{DOI~\discretionary{}{}{}\begingroup
  \urlstyle{rm}\Url}\fi

\bibitem{almira2018}
Almira, J.M., de~Teruel, P.E.L., Romero-Lopez, D.J., Voigtlaender, F.: Negative
  results for approximation using single layer and multilayer feedforward
  neural networks.
\newblock arXiv  (2018).
\newblock \urlprefix\url{http://arxiv.org/abs/1810.10032}

\bibitem{ANASTASSIOU2011809}
Anastassiou, G.A.: Multivariate hyperbolic tangent neural network
  approximation.
\newblock Computers \& Mathematics with Applications \textbf{61}(4), 809--821
  (2011)

\bibitem{Anastassiou2011}
Anastassiou, G.A.: Univariate hyperbolic tangent neural network quantitative
  approximation.
\newblock In: Intelligent Systems: Approximation by Artificial Neural Networks,
  vol.~19, pp. 33--65. Springer, Berlin (2011)

\bibitem{Barron93}
{Barron}, A.R.: Universal approximation bounds for superpositions of a
  sigmoidal function.
\newblock IEEE Transactions on Information Theory \textbf{39}(3), 930--945
  (1993)

\bibitem{Bartlett1998}
Bartlett, P.L., Maiorov, V., Meir, R.: Almost linear \uppercase{VC}-dimension
  bounds for piecewise polynomial networks.
\newblock Neural Computation \textbf{10}(8), 2159--2173 (1998)

\bibitem{Bartlett1996}
{Bartlett}, P.L., {Williamson}, R.C.: The \uppercase{VC} dimension and
  pseudodimension of two-layer neural networks with discrete inputs.
\newblock Neural Computation \textbf{8}(3), 625--628 (1996)

\bibitem{Chen92}
Chen, T., Chen, H., Liu, R.: A constructive proof and an extension of
  cybenko’s approximation theorem.
\newblock In: C.~Page, R.~LePage (eds.) Computing Science and Statistics, pp.
  163--168. Springer, New York, NY (1992)

\bibitem{Chen13}
Chen, Z., Cao, F.: The properties of logistic function and applications to
  neural network approximation.
\newblock J. Computational Analysis and Applications \textbf{15}(6), 1046--1056
  (2013)

\bibitem{Cheney2000}
Cheney, E.W., Light, W.A.: A course in Approximation Theory.
\newblock AMS, Providence, RI (2000)

\bibitem{Costarelli14}
Costarelli, D.: Sigmoidal Functions Approximation and Applications.
\newblock Doctoral thesis, Roma Tre University, Rome (2014)

\bibitem{COSTARELLI2013101}
Costarelli, D., Spigler, R.: Approximation results for neural network operators
  activated by sigmoidal functions.
\newblock Neural Networks \textbf{44}, 101--106 (2013)

\bibitem{Costarelli2013}
Costarelli, D., Spigler, R.: Constructive approximation by superposition of
  sigmoidal functions.
\newblock Analysis in Theory and Applications \textbf{29}, 169--196 (2013)

\bibitem{Costarelli17}
Costarelli, D., Vinti, G.: Saturation classes for max-product neural network
  operators activated by sigmoidal functions.
\newblock Results in Mathematics \textbf{72}, 1555--1569 (2017)

\bibitem{Cybenko89}
Cybenko, G.: Approximation by superpositions of a sigmoidal function.
\newblock Mathematics of Control, Signals and Systems \textbf{2}(4), 303--314
  (1989)

\bibitem{Chen93}
Debao, C.: Degree of approximation by superpositions of a sigmoidal function.
\newblock Approximation Theory and its Applications \textbf{9}(3), 17--28
  (1993)

\bibitem{DeVore93}
DeVore, R.A., Lorentz, G.G.: Constructive Approximation.
\newblock Springer, Berlin (1993)

\bibitem{DeVore88}
DeVore, R.A., Popov, V.A.: Interpolation spaces and non-linear approximation.
\newblock In: M.~Cwikel, J.~Peetre, Y.~Sagher, H.~Wallin (eds.) Function Spaces
  and Applications, pp. 191--205. Springer, Berlin, Heidelberg (1988)

\bibitem{Dickmeis1984}
Dickmeis, W., Nessel, R.J., van Wickeren, E.: A general approach to
  counterexamples in numerical analysis.
\newblock Numerische Mathematik \textbf{43}(2), 249--263 (1984)

\bibitem{Dickmeis1985}
Dickmeis, W., Nessel, R.J., van Wickeren, E.: On nonlinear condensation
  principles with rates.
\newblock manuscripta mathematica \textbf{52}(1), 1--20 (1985)

\bibitem{DiNeWi2}
Dickmeis, W., Nessel, R.J., van Wickeren, E.: Quantitative extensions of the
  uniform boundedness principle.
\newblock Jahresber.\ Deutsch.\ Math.-Verein. \textbf{89}, 105--134 (1987)

\bibitem{Dickmeis84}
Dickmeis, W., Nessel, R.J., van Wickern, E.: On the sharpness of estimates in
  terms of averages.
\newblock Math. Nachr. \textbf{117}, 263--272 (1984)

\bibitem{Funahashi89}
Funahashi, K.I.: On the approximate realization of continuous mappings by
  neural networks.
\newblock Neural Networks \textbf{2}, 183--192 (1989)

\bibitem{Goebbels13b}
Goebbels, S.: A sharp error estimate for numerical \uppercase{F}ourier
  transform of band-limited functions based on windowed samples.
\newblock Z. Anal. Anwendungen \textbf{32}, 371--387 (2013)

\bibitem{Goebbels19}
Goebbels, S.: A counterexample regarding ``\uppercase{N}ew study on neural
  networks: the essential order of approximation''.
\newblock Neural Networks \textbf{123}, 234--235 (2020)

\bibitem{Goebbels20b}
Goebbels, S.: On sharpness of error bounds for multivariate neural network
  approximation.
\newblock arXiv  (2020).
\newblock \urlprefix\url{http://arxiv.org/abs/2004.02203}

\bibitem{Goldberg1995}
Goldberg, P.W., Jerrum, M.R.: Bounding the
  \uppercase{V}apnik-\uppercase{C}hervo\-nenkis dimension of concept classes
  parameterized by real numbers.
\newblock Machine Learning \textbf{18}(2), 131--148 (1995)

\bibitem{GULIYEV2018296}
Guliyev, N.J., Ismailov, V.E.: A single hidden layer feedforward network with
  only one neuron in the hidden layer can approximate any univariate function.
\newblock Neural Networks \textbf{28}(7), 1289--1304 (2016)

\bibitem{Hornik91}
Hornik, K.: Approximation capabilities of multilayer feedforward networks.
\newblock Neural Networks \textbf{4}, 251--257 (1991)

\bibitem{HORNIK1989359}
Hornik, K., Stinchcombe, M., White, H.: Multilayer feedforward networks are
  universal approximators.
\newblock Neural Networks \textbf{2}(5), 359--366 (1989)

\bibitem{JohnenScherer}
Johnen, H., Scherer, K.: On the equivalence of the \uppercase{K}-functional and
  moduli of continuity and some applications.
\newblock In: W.~Schempp, K.~Zeller (eds.) Constructive Theory of Functions of
  Several Variables. Proc.\ Conf.\ Oberwolfach 1976, pp. 119--140 (1976)

\bibitem{Jones90}
{Jones}, L.K.: Constructive approximations for neural networks by sigmoidal
  functions.
\newblock Proceedings of the IEEE \textbf{78}(10), 1586--1589, Correction and
  addition in Proc. IEEE 79 (1991), 243 (1990)

\bibitem{KARPINSKI1997169}
Karpinski, M., Macintyre, A.: Polynomial bounds for \uppercase{VC} dimension of
  sigmoidal and general pfaffian neural networks.
\newblock Journal of Computer and System Sciences \textbf{54}(1), 169--176
  (1997)

\bibitem{Kurkova98}
K\r{u}rkov\'{a}, V.: Rates of approximation of multivariable functions by
  one-hidden-layer neural networks.
\newblock In: M.~Marinaro, R.~Tagliaferri (eds.) Neural Nets WIRN VIETRI-97,
  Perspectives in Neural Computing, pp. 147--152. Springer, London (1998)

\bibitem{LEI20131006}
Lei, Y., Ding, L.: Approximation and estimation bounds for free knot splines.
\newblock Computers \& Mathematics with Applications \textbf{65}(7), 1006--1024
  (2013)

\bibitem{Leshno93}
Leshno, M., Lin, V.Y., Pinkus, A., Schocken, S.: Multilayer feedforward
  networks with a nonpolynomial activation function can approximate any
  function.
\newblock Neural Networks \textbf{6}(6), 861 -- 867 (1993)

\bibitem{LEWICKI2004}
Lewicki, G., Marino, G.: Approximation of functions of finite variation by
  superpositions of a sigmoidal function.
\newblock Applied Mathematics Letters \textbf{17}(10), 1147--1152 (2004)

\bibitem{Li2019}
Li, F.J.: Constructive function approximation by neural networks with optimized
  activation functions and fixed weights.
\newblock Neural Computing and Applications \textbf{31}(9), 4613--4628 (2019)

\bibitem{Ratsaby99}
Maiorov, V., Ratsaby, J.: On the degree of approximation by manifolds of finite
  pseudo-dimension.
\newblock Constructive Approximation \textbf{15}, 291--300 (1999)

\bibitem{Maiorov99}
Maiorov, V.E., Meir, R.: On the near optimality of the stochastic approximation
  of smooth functions by neural networks.
\newblock Advances in Computational Mathematics \textbf{13}, 79--103 (2000)

\bibitem{MAKOVOZ1998215}
Makovoz, Y.: Uniform approximation by neural networks.
\newblock Journal of Approximation Theory \textbf{95}(2), 215--228 (1998)

\bibitem{Pinkus85}
Pinkus, A.: n-Width in Approximation Theory.
\newblock Springer, Berlin (1985)

\bibitem{pinkus_1999}
Pinkus, A.: Approximation theory of the \uppercase{MLP} model in neural
  networks.
\newblock Acta Numerica \textbf{8}, 143--195 (1999)

\bibitem{Ritter99}
{Ritter}, G.: Efficient estimation of neural weights by polynomial
  approximation.
\newblock IEEE Transactions on Information Theory \textbf{45}(5), 1541--1550
  (1999)

\bibitem{Sanguineti08universalapproximation}
Sanguineti, M.: Universal approximation by ridge computational models and
  neural networks: A survey.
\newblock The Open Applied Mathematics Journal \textbf{2}, 31--58 (2008)

\bibitem{Sanguineti99}
Sanguineti, M., Hlav{\'a}{\v{c}}kov{\'a}-Schindler, K.: Some comparisons
  between linear approximation and approximation by neural networks.
\newblock In: Artificial Neural Nets and Genetic Algorithms, pp. 172--177.
  Springer, Vienna (1999)

\bibitem{Schmitt01}
Schmitt, M.: Radial basis function neural networks have superlinear
  \uppercase{VC} dimension.
\newblock In: Proceedings of the 14th Annual Conference on Computational
  Learning Theory COLT 2001 and 5th European Conference on Computational
  Learning Theory EuroCOLT 2001, volume 2111 of Lecture Notes in Artificial
  Intelligence, pp. 14--30. Springer, Berlin (2001)

\bibitem{Schmitt02}
Schmitt, M.: \uppercase{RBF} neural networks and \uppercase{D}escartes' rule of
  signs.
\newblock In: N.~Cesa-Bianchi, M.~Numao, R.~R. (eds.) Algorithmic Learning
  Theory ALT 2002, Lecture Notes in Computer Science, vol. 2533. Springer,
  Heidelberg (2002)

\bibitem{Shai}
Shalev-Shwartz, S., Ben-David, S.: Understanding Machine Learning – from
  Theory to Algorithms.
\newblock Cambridge University Press, New York, NY (2014)

\bibitem{Timan63}
Timan, A.: Theory of Approximation of Functions of a Real Variable.
\newblock Pergamon Press, New York, NY (1963)

\bibitem{Tossavainen07}
Tossavainen, T.: The lost cousin of the fundamental theorem of algebra.
\newblock Mathematics Magazine \textbf{80}(4), 290--294 (2007)

\bibitem{Wang10}
Wang, J., Xu, Z.: New study on neural networks: the essential order of
  approximation.
\newblock Neural Networks \textbf{23}(5), 618 -- 624 (2010)

\bibitem{Xu2004}
Xu, Z., Cao, F.: The essential order of approximation for neural networks.
\newblock Science in China Series F: Information Sciences \textbf{47}(1),
  97--112 (2004)

\bibitem{Xu2006}
Xu, Z., Wang, J.J.: The essential order of approximation for neural networks.
\newblock Science in China Series F: Information Sciences \textbf{49}(4),
  446--460 (2006)

\end{thebibliography}

\end{document}